\newtheorem{definition}{Definition}[section]
\newtheorem{proposition}[definition]{Proposition}
\newtheorem{theorem}[definition]{Theorem}
\newtheorem{lemma}[definition]{Lemma}
\newtheorem{remark}{Remark}[section]
\begin{document}

%\begin{frontmatter}

%% Title, authors and addresses

%% use the tnoteref command within \title for footnotes;
%% use the tnotetext command for theassociated footnote;
%% use the fnref command within \author or \address for footnotes;
%% use the fntext command for theassociated footnote;
%% use the corref command within \author for corresponding author footnotes;
%% use the cortext command for theassociated footnote;
%% use the ead command for the email address,
%% and the form \ead[url] for the home page:
%% \title{Title\tnoteref{label1}}
%% \tnotetext[label1]{}
%% \author{Name\corref{cor1}\fnref{label2}}
%% \ead{email address}
%% \ead[url]{home page}
%% \fntext[label2]{}
%% \cortext[cor1]{}
%% \address{Address\fnref{label3}}
%% \fntext[label3]{}

%\title[Nonlinear damped wave equation]{
%The Cauchy problem for the nonlinear damped wave equation
%with slowly decaying data
%}
\title[Nonlinear damped wave equation with slowly decaying data]{
The Cauchy problem for the nonlinear damped wave equation
with slowly decaying data
}

\author[M. Ikeda]{Masahiro IKEDA}
\author[T. Inui]{Takahisa INUI}
\author[Y. Wakasugi]{Yuta WAKASUGI}
 \address[M. Ikeda and T. Inui]{Department of Mathematics,
 Graduate School of Science, Kyoto University, Kyoto 606-8502, Japan}
 \address[Y. Wakasugi]{Graduate School of Mathematics, Nagoya University,
 Furocho, Chikusaku, Nagoya 464-8602, Japan}

%\author{Masahiro the first author}
%\address{}
%
%
%\author{Takahisa Inui}
%
%\author{Yuta Wakasugi}
%\email{y-wakasugi@cr.math.sci.osaka-u.ac.jp}
%\address{Graduate School of Mathematics, Nagoya University\\
%Furocho, Chikusaku, 464-8602 Japan}

\begin{abstract}
We study the Cauchy problem for the nonlinear damped wave equation
and establish the large data local well-posedness
and small data global well-posedness
with slowly decaying initial data.
We also prove that the asymptotic profile of the global solution is
given by a solution of the corresponding parabolic problem,
which shows that the solution of the damped wave equation has the diffusion phenomena.
Moreover, we show blow-up of solution
and give the estimate of the lifespan
for a subcritical nonlinearity.
In particular, we determine the critical exponent for any space dimension.
\end{abstract}

\maketitle
%\footnote[0]{2010 Mathematics Subject Classification. 35L15; 35B40}

%% use optional labels to link authors explicitly to addresses:
%% \author[label1,label2]{}
%% \address[label1]{}
%% \address[label2]{}

%\author{}

%\begin{abstract}
%%% Text of abstract
%
%\end{abstract}

%\begin{keyword}
%nonlinear damped wave equation\sep
%slowly decaying data\sep
%global well-posedness\sep
%asymptotic behavior\sep
%blow-up\sep
%estimates of linfespan
%%% keywords here, in the form: keyword \sep keyword
%
%%% PACS codes here, in the form: \PACS code \sep code
%
%%% MSC codes here, in the form: \MSC code \sep code
%%% or \MSC[2008] code \sep code (2000 is the default)
%
%\end{keyword}

%\end{frontmatter}

%% \linenumbers

%% main text

\tableofcontents

\section{Introduction}
In this paper, we study the Cauchy problem for the nonlinear damped wave equation
\begin{align}
\label{nldw}
	\left\{ \begin{array}{ll}
	\displaystyle
	\partial_t^2 u - \Delta u + \partial_t u = \mathcal{N}(u),
	&(t,x) \in [0,\infty)\times \mathbb{R}^n,\\
	\displaystyle
	u(0,x)=\varepsilon u_0(x),\ \partial_tu(0,x)=\varepsilon u_1(x),
	&x\in \mathbb{R}^n,
	\end{array} \right.
\end{align}
where
$n\in \mathbb{N}$
and
$u$ is a real-valued unknown function,
$\mathcal{N}(u)$
is a power type nonlinearity,
$(u_0, u_1)$
are given data,
and
$\varepsilon > 0$ is a positive parameter,
which describes the amplitude of the initial data.

Our purpose is to establish
the large data local well-posedness
and
the small data global well-posedness
for the Cauchy problem \eqref{nldw}
with slowly decaying initial data, that is,
we treat the initial data not belonging to $L^1(\mathbb{R}^n)$ in general.
Moreover, we investigate the asymptotic behavior of
the global solution and
the estimate of the lifespan from both above and below for subcritical nonlinearities.

The equation \eqref{nldw} is firstly derived by
Oliver Heaviside as the telegrapher's equation,
which describes the current and voltage in an electrical circuit with resistance and inductance.
Cattaneo \cite{Ca58} also introduced the equation \eqref{nldw} as
a modified heat conduction equation which equips
the finite propagation speed property.
The equation \eqref{nldw} also has several background related to
biology and stochastic models
such as genetics, population dynamics \cite{DuOh86, Ha96}
and correlated random walk \cite{Go51, Ka74}.

The local and global well-posedness,
asymptotic behavior of global-in-time solutions and
blow-up of local-in-time solutions have been widely studied for a long time.
Since a pioneer work by Matsumura \cite{Ma76},
it has been well known that solutions of the damped wave equation
behaves like that of the heat equation as time tends to infinity.
Namely, he established $L^p$-$L^q$ estimates of the linear damped wave equation
(Eq. \eqref{nldw} with $\mathcal{N}(u) = 0$),
whose decay rates are the same as those of the linear heat equation
$v_t - \Delta v = 0$
(see also Racke \cite{Ra90} for more general setting).
After that, the so-called diffusion phenomena was found by
Hsiao and Liu \cite{HsLi92} for hyperbolic conservation laws with damping.
Namely, they showed that the asymptotic profile of the solution is given by
the heat kernel
(see also \cite{Ni96, Ni97, Li97, YaMi00}).
Later on, Nishihara \cite{Ni03MathZ}, Marcati and Nishihara \cite{MaNi03},
Hosono and Ogawa \cite{HoOg04}, and Narazaki \cite{Na04}
derived more precise
$L^p$--$L^q$ estimates
for the linear damped wave equation
and applied them to
semilinear equations to obtain global solutions.
Also, the diffusion phenomena for abstract damped wave equations
were studied by
\cite{Ik02, IkNi03, ChHa03, RaToYo11, RaToYo16, Nis}.
%Related to our problem \eqref{nldw},
%wave equations with time or space-dependent damping
%or structural damping 
%(the damping term is replaced by $(-\Delta)^{\sigma} u_t$)
%have been also investigated by
%\cite{DaEb14, Ik14, Wa14JHDE, SoWa, Wi07JDE, Wi07ADE, Ya06}.

For the nonlinear damped wave equation with
the absorbing nonlinearity $\mathcal{N}(u) = - |u|^{p-1}u$,
Kawashima, Nakao and Ono \cite{KaNaOn95}
refined Matsumura's $L^p$--$L^q$ estimates and
applied them to the global well-posedness for the equation \eqref{nldw}
with arbitrarily initial data $(u_0, u_1)\in H^1\times L^2$.
Based on this result, Karch \cite{Kar00} showed the diffusion phenomena
when $p > 1+\frac{4}{n}$ and $n\le 3$.
After that,
Hayashi, Kaikina and Naumkin \cite{HaKaNa07},
Ikehata, Nishihara and Zhao \cite{IkNiZh06}
and Nishihara \cite{Ni06} treated the case
$p > 1+\frac{2}{n}$ and $n\le 4$ if the initial data belongs to
$(H^1\cap L^1) \times (L^2 \cap L^1)$.
Also, Hayashi, Kaikina and Naumkin
\cite{HaKaNa04JDE, HaKaNa06, HaKaNa07, HaKaNa07JMAA},
Hayashi and Naumkin \cite{HaNa}
and Hamza \cite{Ham10}
studied the asymptotic profile of solutions for
critical and subcritical nonlinearities $1<p\le 1+\frac{2}{n}$.

The nonlinear damped wave equation with the source term
$\mathcal{N}(u) = |u|^p$ or $|u|^{p-1}u$
has been widely studied.
In this case, Levine \cite{Le75} showed that the solution in general blows up in finite time for
large initial data.
Therefore, to obtain the global existence of solutions,
we need some smallness condition for the initial data.
Nakao and Ono \cite{NakOn93} studied the case
$\mathcal{N}(u) = |u|^{p-1}u$ with $p \ge 1+\frac{4}{n}$
and proved the global existence of solutions
by the method of modified potential well.
Li and Zhou \cite{LiZh95} found that when $n \le 2$, the critical exponent of \eqref{nldw}
is given by $p=1+\frac{2}{n}$, that is, the local-in-time solution
can be extended time-globally if $p>1+\frac{2}{n}$ and the initial data is
sufficiently small, while the finite time blow-up occurs
if $p\le 1+\frac{2}{n}$ and the initial data has positive integral value.
The number $1+\frac{2}{n}$ is well known as Fujita's critical exponent
named after his seminal work \cite{Fu66},
which is the threshold between the global existence and the blow-up
of solutions to the semilinear heat equation.
Also, in \cite{LiZh95}, the optimal upper estimate of the lifespan for blow-up solutions
was also given
(see also \cite{Ni03Ib} for the case $n=3$, \cite{IkeWa15} for $n\ge 4$ and $p<1+\frac{2}{n}$
and the first author and Ogawa \cite{IkeOg} for $n \ge 4$, $p=1+\frac{2}{n}$).
Later on, Todorova and Yordanov \cite{ToYo01} and Zhang \cite{Zh01}
determined the critical exponent as $p=1+\frac{2}{n}$
for all space dimensions.
Moreover, Ono \cite{On03, On06} derived $L^m$-decay of solutions
for $1\le m\le 2n/(n-2)_+$.
The results of \cite{LiZh95} and \cite{ToYo01} require
that the initial data belongs to $H^1\times L^2$ and has the compact support.
Ikehata, Miyaoka and Nakatake \cite{IkMiNa04},
Ikehata and Tanizawa \cite{IkTa05}
and Hayashi, Kaikina and Naumkin \cite{HaKaNa04}
removed the compactness assumption and proved the global existence
of solutions for the initial data belonging to $L^1$.
Moreover,
Nakao and Ono \cite{NakOn93},
Ikehata and Ohta \cite{IkOh02} and Narazaki and Nishihara \cite{NaNi08}
studied the global well-posedness for slowly decaying initial data
not belonging to $L^1$.
In particular, in \cite{IkOh02}, small data global existence is proved
when the nonlinearity is $\mathcal{N}(u) = |u|^{p-1}u$ with
$p> 1+\frac{2r}{n}$
for $n \le 6$ and
$(H^1\cap L^r)\times (L^2 \cap L^r)$-data,
where
$r$ satisfies
$r\in [1,2]$ if $n=1,2$ and $r\in [ \frac{\sqrt{n^2+16n}-n}{4}, \min\{2,\frac{n}{n-2}\} ]$ if
$3\le n \le 6$.
Finite time blow-up of local solutions was also obtained for any
$n \ge 1$ and $1<p<\frac{2r}{n}$.
However, the above global well-posedness results are restricted to $n\le 6$ and
there are no results for higher dimensional cases.
Also, Narazaki \cite{Na11} considered the slowly decaying data belonging to
modulation spaces and proved the global existence when
the nonlinearity has integer power.

Concerning the asymptotic profile of global solutions,
Gallay and Raugel \cite{GaRa98} determined the asymptotic expansion
up to the second order when $n=1$ and the initial data belongs to
the weighted Sobolev space
$H^{1,1} \times H^{0,1}$
(see Section 1.2 for the definition).
Using the expansion of solutions to the heat equation,
Kawakami and Ueda \cite{KaUe13} extended it to the case $n\le 3$.
Hayashi, Kaikina and Naumkin \cite{HaKaNa04} obtained the first order
asymptotics for all $n\ge 1$
and the initial data belonging to
$(H^{s,0}\cap H^{0,\alpha}) \times (H^{s-1,0}\cap H^{0,\alpha})$
with $\alpha > \frac{n}{2}$ (particularly, belonging to $L^1$).
Recently, Takeda \cite{Ta15, Ta16} determined the higher order
asymptotic expansion of global solutions.
Narazaki and Nishihara \cite{NaNi08} studied the case of slowly decaying data
and proved that if $n\le 3$ and the data behaves like
$(1+|x|)^{-kn}$ with $0<k \le 1$,
then, the asymptotic profile of the global solution is given by
$G(t,x) \ast (1+|x|)^{-kn}$, where $G$ is the Gaussian and $\ast$ denotes the convolution
with respect to spatial variables.

Related to the equation \eqref{nldw},
systems of nonlinear damped wave equation were studied and
the critical exponent and the asymptotic behavior of solutions
were investigated
(see \cite{SuWa07, Na09, Ta09, OgTa10, Na11, OgTa11,
Ni12, NiWa14, HaNaTo15,  NiWa15}).

In the present paper, we establish
the large data local well-posedness
and the small data global well-posedness
for the nonlinear damped wave equation \eqref{nldw}
with slowly decaying initial data.
Our global well-posedness results extend those of
\cite{IkOh02, NaNi08}
to all space dimensions, and generalize that of
\cite{HaKaNa04} to slowly decaying initial data.
Moreover, we study the asymptotic profile of the global solution.
This also extends those of \cite{NaNi08} to all space dimensions.
Considering the asymptotic behavior of solutions in weighted norms,
we further extended the result of \cite{HaKaNa04} to
the asymptotics in $L^m$-norm with $m\le 2$.
Finally, we give an almost optimal lifespan estimate from
both above and below.
This is also an extension of \cite{LiZh95, Ni03Ib, IkeWa15},
in which $L^1$-data were treated.

%%%%%%%%%%%%%%%%%%%%%%%%%%%%%%%%%
%%%%%%%%%%%%%%%%%%%%%%%%%%%%%%%%% 1.1
\subsection{Main results}
We say that
$u \in L^{\infty}(0,T;L^2(\mathbb{R}^n))$
is a mild solution of \eqref{nldw} if
$u$
satisfies the integral equation
\[
	u(t)=
	\left( \partial_t+1 \right)\mathcal{D}(t)\varepsilon u_0
		+ \mathcal{D}(t)\varepsilon u_1
		+ \int_0^t \mathcal{D}(t-\tau)\mathcal{N}(u(\tau))\,d\tau.
\]
in
$L^{\infty}(0,T;L^2(\mathbb{R}^n))$,
where
$\mathcal{D}(t)$ is the solution operator of the damped wave equation
defined in \eqref{d} below.

We assume that
there exists $p >1$ such that
the nonlinear term
$\mathcal{N}(u)$
satisfies
$\mathcal{N} \in C^{p_0} (\mathbb{R})$
with some integer
$p_0 \in [0, p]$
and
\begin{align}
\label{nonlin}
	\left\{ \begin{array}{ll}
	\displaystyle \mathcal{N}^{(l)} (0) = 0,\\[5pt]
	 \displaystyle \left| \mathcal{N}^{(l)}(u) - \mathcal{N}^{(l)}(v) \right|
		\lesssim | u-v| (|u| + |v| )^{p-l-1}
	\end{array}
	\quad (l=0,\ldots, p_0). \right.
\end{align}

%%%%%%%%%%%%%%%%%%%%%%%%%%%%%%%%%%%%%%%
\begin{theorem}[Local well-posedness for large data]\label{thm_lwp}
Let
$n\in \mathbb{N}$
and let
$s\ge 0$ be
$0\le [s] \le p_0$.
When $n=1$, we also assume that $0 \le s <1$.
Let
$r \in [1,2]$
and
$s, p$
satisfy
\begin{align*}
	\begin{array}{lll}
	%\displaystyle r\in [1,2],
		&\displaystyle \min\left\{ 1+\frac{r}{2}, 1+\frac{r}{n} \right\} \le p < \infty
		&\displaystyle \mbox{if}\ \ 1\le n \le 2s,\\[7pt]
	%\displaystyle r\in [1,2],
				%r \in \left[1, \min\{ 2, \frac{2n}{n+2-2s} \} \right],
		&\displaystyle 1+\frac{r}{n}
				\le p \le \min\left\{ 1+ \frac{2}{n-2s}, \frac{2n}{r(n-2s)} \right\},
		&\displaystyle \mbox{if}\ \ 2s < n, n\ge 2,\\[7pt]
				%r \in [1,2],
		&\displaystyle 1+\frac{r}{2} \le p \le \frac{1}{1-2s},
		&\displaystyle \mbox{if}\ \ 2s < n, n=1.
	\end{array}
\end{align*}
We take an initial data from
\[
	u_0 \in H^{s,0}(\mathbb{R}^n) \cap H^{0,\alpha} (\mathbb{R}^n),\quad
	u_1 \in H^{s-1,0}(\mathbb{R}^n) \cap H^{0,\alpha} (\mathbb{R}^n),
\]
where
$\alpha > n\left( \frac{1}{r}  - \frac{1}{2} \right)$.
Then, for any
$\varepsilon>0$,
there exists
$T=T(\varepsilon) \in (0, \infty]$
such that
the Cauchy problem \eqref{nldw} admits a unique local mild solution
$u\in C([0,T); H^{s,0}(\mathbb{R}^n) \cap H^{0,\alpha}(\mathbb{R}^n))$.
Moreover, if
$T < \infty$,
$u$
satisfies
\[
	\liminf_{t\to T} \| u(t) \|_{H^{s,0}\cap H^{0,\alpha}} = \infty.
\]
\end{theorem}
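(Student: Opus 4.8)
The plan is to realize a mild solution as a fixed point of the solution map
\[
	\Phi(u)(t) := (\partial_t+1)\mathcal{D}(t)\varepsilon u_0 + \mathcal{D}(t)\varepsilon u_1 + \int_0^t \mathcal{D}(t-\tau)\mathcal{N}(u(\tau))\,d\tau,
\]
and to invoke the Banach fixed point theorem. Writing $Y := H^{s,0}(\mathbb{R}^n)\cap H^{0,\alpha}(\mathbb{R}^n)$, I would work on the closed ball
\[
	X_{T,M} := \{\, u \in C([0,T];Y) : \sup_{0\le t\le T}\|u(t)\|_Y \le M \,\},
\]
but measure distances in the weaker metric $d(u,v) := \sup_{0\le t\le T}\|u(t)-v(t)\|_{L^2\cap H^{0,\alpha}}$. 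With this metric $X_{T,M}$ is complete: a $d$-Cauchy sequence converges in $L^2\cap H^{0,\alpha}$, and the uniform $H^{s,0}$ bound persists in the limit by weak lower semicontinuity. This choice is essential because the difference estimate for $\mathcal{N}$ should use only the $l=0$ case of \eqref{nonlin}, so that no regularity of $\mathcal{N}$ beyond continuity is required to obtain the contraction; the radius $M$ is taken of the order of the linear evolution of the data, and $T$ is chosen small at the end.

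The linear input consists of the propagator bounds for $\mathcal{D}(t)$ coming from its Fourier representation. On the finite interval $[0,T]$ they furnish $\|(\partial_t+1)\mathcal{D}(t)u_0\|_Y \lesssim \|u_0\|_{H^{s,0}\cap H^{0,\alpha}}$ and $\|\mathcal{D}(t)u_1\|_Y \lesssim \|u_1\|_{H^{s-1,0}\cap H^{0,\alpha}}$, the weighted component being kept under control by finite propagation speed. The hypothesis $\alpha > n(\frac1r-\frac12)$ enters through the Hölder embedding $H^{0,\alpha}\hookrightarrow L^r$, which places the (possibly non-$L^1$) data in $L^r$ and underlies the slowly decaying framework, the parameter $r$ reappearing in the admissible range of $p$. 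Since all propagator factors are bounded on $[0,T]$, the Duhamel contribution satisfies $\|\int_0^t\mathcal{D}(t-\tau)\mathcal{N}(u(\tau))\,d\tau\|_Y \lesssim T\,\sup_{0\le\tau\le T}\|\mathcal{N}(u(\tau))\|_Y$, and the factor $T$ supplies the smallness needed to close the argument.

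The crux, and the step I expect to be the main obstacle, is the pair of nonlinear estimates
\[
	\|\mathcal{N}(u)\|_Y \lesssim \|u\|_Y^p, \qquad \|\mathcal{N}(u)-\mathcal{N}(v)\|_{L^2\cap H^{0,\alpha}} \lesssim (\|u\|_Y+\|v\|_Y)^{p-1}\,\|u-v\|_{L^2\cap H^{0,\alpha}}.
\]
For the $L^2$ and $L^r$ pieces I would use $|\mathcal{N}(u)|\lesssim|u|^p$ from \eqref{nonlin} together with the Gagliardo--Nirenberg inequality; the upper bounds on $p$, namely $p\le 1+\frac{2}{n-2s}$ and $p\le\frac{2n}{r(n-2s)}$ (with their one-dimensional analogue), are exactly the conditions ensuring that the Lebesgue exponents produced by $|u|^p$ are reachable from $H^{s,0}$. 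For the $H^{s,0}$ component I would apply the fractional Leibniz (Kato--Ponce) rule and the fractional chain rule; the assumption $\mathcal{N}\in C^{p_0}$ with $[s]\le p_0$ and the derivative bounds in \eqref{nonlin} are precisely what permit differentiating $\mathcal{N}(u)$ up to order $s$. For the weighted component one estimates $\|\langle x\rangle^{\alpha}|u|^{p}\|_{L^2}$ by placing the full weight on a single factor $\langle x\rangle^{\alpha}u$ and distributing the remaining $p-1$ factors through weighted Hölder and Gagliardo--Nirenberg; the lower bound $p\ge 1+\frac{r}{n}$ is what makes this weighted interpolation close. The difference estimate follows directly from the $l=0$ bound in \eqref{nonlin} after inserting the same interpolation.

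Combining these bounds, $\Phi$ maps $X_{T,M}$ into itself and is a $d$-contraction once $M$ is fixed at the data size and $T$ is taken small; Banach's theorem then yields a unique fixed point $u\in X_{T,M}$, and uniqueness in the full class follows from the difference estimate. That $u\in C([0,T];Y)$, with genuine (not merely weak) continuity in the top component, is recovered from the integral equation together with the linear estimates. Finally, the blow-up alternative is the usual continuation argument: the construction produces an existence time depending only on $\|u(t_0)\|_Y$, so if $\liminf_{t\to T}\|u(t)\|_Y$ were finite there would be times $t_k\uparrow T$ with $\|u(t_k)\|_Y$ bounded, from which the solution could be restarted and extended past $T$, contradicting the maximality of $T$; hence $\liminf_{t\to T}\|u(t)\|_{H^{s,0}\cap H^{0,\alpha}}=\infty$ whenever $T<\infty$.
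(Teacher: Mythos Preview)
Your overall scheme (contraction mapping on a ball in $H^{s,0}\cap H^{0,\alpha}$, with the $T$-factor from the Duhamel term providing smallness) is reasonable, but the nonlinear estimate you single out as ``the crux'' is in fact false in the stated generality, and this is where the proposal breaks down. Take for instance $n=3$, $s=1$, $p=3$ (the endpoint $p=1+\tfrac{2}{n-2s}$). Then you would need $\|\nabla(|u|^3)\|_{L^2}\lesssim\|u\|_{H^1}^3$, i.e.\ essentially $\|u^2\nabla u\|_{L^2}\lesssim\|u\|_{H^1}^3$; but any H\"older splitting forces either $\|u\|_{L^\infty}$ or $\|\nabla u\|_{L^q}$ with $q>2$, neither of which is controlled by $\|u\|_{H^1}$ in three dimensions. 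The difference estimate has the same defect: to bound $\|(|u|+|v|)^{p-1}(u-v)\|_{L^2}$ using only $\|u-v\|_{L^b}$ with $b\le 2$ (which is all that $L^2\cap H^{0,\alpha}$ gives you) would require $\frac{p-1}{a}=\frac12-\frac1b\le 0$, impossible for $p>1$. So neither the self-map nor the contraction closes in the space you propose, and the weaker-metric trick does not rescue it.

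The paper's remedy is precisely to \emph{not} estimate $\mathcal{N}(u)$ in $H^{s,0}\cap H^{0,\alpha}$. Instead it introduces an auxiliary norm $\|\cdot\|_{Y(T)}$ for the nonlinearity (see Table~2), built from $\||\nabla|^{[s]}\cdot\|_{L^{\rho}}$ with $\rho=\tfrac{2n}{n+2-2(s-[s])}<2$, together with $\|\cdot\|_{L^{\gamma}}$ for $\gamma\in[\sigma_1,\sigma_2]$ and $\|\langle\cdot\rangle^{\alpha}\cdot\|_{L^q}$ with $q=\tfrac{2n}{n+2}$. The Duhamel map is then shown (Lemma~\ref{lem_duh}) to send $Y(T)$ back into the solution space $X(T)$, the key ingredient being the gain of one derivative hidden in the high-frequency part of $\mathcal{D}(t)$: the term $e^{-t/4}\||\nabla|^s\langle\nabla\rangle^{-1}\psi\|_{L^2}$ in Lemma~\ref{lem_lin} is controlled, via Sobolev, by $\||\nabla|^{[s]}\psi\|_{L^\rho}$ rather than $\||\nabla|^{s}\psi\|_{L^2}$. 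With the nonlinearity measured in these lower Lebesgue exponents, the product estimates (Lemma~\ref{lem_nl}, using Fa\`a di Bruno and a careful choice of H\"older exponents $q_j(k)$) do close for the full range of $p$. Your sentence ``since all propagator factors are bounded on $[0,T]$'' is exactly the step that throws away this smoothing; retaining it is what makes the argument work.
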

%%%%%%%%%%%%%%%%%%%%%%%%%%%%%%%%%%%%%%%
%\begin{remark}
%To make sure that we can take $p$ in the intervals
%in the assumptions of Theorem \ref{thm_lwp},
%we require that
%$s$
%satisfies
%$[s] < 1+ \frac{2}{n-2s}$
%when $n>2s$.
%\end{remark}

%%%%%%%%%%%%%%%%%%%%%%%%%%%%%%%%%%%%%%%
\begin{theorem}[Global well-posedness for small data]\label{thm_gwp}
In addition to the assumption in Theorem \ref{thm_lwp},
we assume that
$r\in [1,2]$
and
$s, p$
satisfy
\begin{align*}
	\begin{array}{lll}
	%\displaystyle r\in [1,2],
		&\displaystyle 1+\frac{2r}{n} < p < \infty
		&\displaystyle \mbox{if}\ \ 1\le n \le 2s,\\[7pt]
	%\displaystyle r\in [1,2],
		%r \in \left[1, \min\{ 2, \frac{2n}{n+2-2s} \} \right],
		&\displaystyle 1+\frac{2r}{n} < p \le \min\left\{ 1+ \frac{2}{n-2s}, \frac{2n}{r(n-2s)} \right\}
		&\displaystyle \mbox{if}\ \ 2s < n, n\ge 2,\\
		&\displaystyle 1+ 2r < p \le \frac{1}{1-2s}
		&\displaystyle \mbox{if}\ \ 2s < n, n=1%\le 4s,\\
%	\displaystyle r \in [1, \frac{n}{n-2s}),
%		&\displaystyle 1+\frac{2r}{n} < p \le 1+ \frac{2}{n-2s}\\
%		&\displaystyle \mbox{if}\ \ 4s < n
	\end{array}
\end{align*}
(when $r \in (1,2]$, we may take $p= 1+\frac{2r}{n}$).
%\begin{align*}
%	\begin{array}{ll}
%	&\displaystyle
%		r\in (1,2] \quad \mbox{and}\quad
%		1+\frac{2r}{n}=p
%	&\displaystyle
%		\mbox{if}
%	&r\in (1,2] \quad \mbox{and}\quad
%	1+\frac{2r}{n}=p \le \min\left\{ 1+ \frac{2}{n-2s}, \frac{2n}{r(n-2s)} \right\}.
%\end{align*}
Then, there exists a constant
$\varepsilon_0 =
\varepsilon_0(n,p,r,s,\alpha$,
$\| u_0\|_{H^{s,0}\cap H^{0,\alpha}}$,$\|u_1\|_{H^{s-1,0}\cap H^{0,\alpha}}) > 0$
such that for any
$\varepsilon \in (0,\varepsilon_0]$,
the Cauchy problem \eqref{nldw} admits a unique global mild solution
$u\in C([0,\infty); H^{s,0}(\mathbb{R}^n) \cap H^{0,\alpha}(\mathbb{R}^n))$.
Moreover, the solution $u$ satisfies the decay estimates
\begin{align*}
	\| u(t) \|_{L^2}
		&\lesssim \varepsilon \langle t \rangle^{-\frac{n}{2}\left( \frac{1}{r}-\frac{1}{2}\right)},
		\quad
		%\left( \| u_0\|_{H^{s,0}\cap H^{0,\alpha}} + \|u_1\|_{H^{s-1,0}\cap H^{0,\alpha}} \right), \\
	\| |\nabla|^s u(t) \|_{L^2}
		\lesssim \varepsilon
		\langle t \rangle^{-\frac{n}{2}\left( \frac{1}{r}-\frac{1}{2}\right)-\frac{s}{2}}, \quad
		%\left( \| u_0\|_{H^{s,0}\cap H^{0,\alpha}} + \|u_1\|_{H^{s-1,0}\cap H^{0,\alpha}} \right), \\
	\| |\cdot|^{\alpha} u(t) \|_{L^2}
		\lesssim \varepsilon
		\langle t \rangle^{-\frac{n}{2}\left( \frac{1}{r}-\frac{1}{2}\right)+\frac{\alpha}{2}}.
		%\left( \| u_0\|_{H^{s,0}\cap H^{0,\alpha}} + \|u_1\|_{H^{s-1,0}\cap H^{0,\alpha}} \right),
\end{align*}
\end{theorem}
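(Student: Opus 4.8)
The plan is to solve the integral (mild) formulation by the contraction mapping principle in a time-weighted space adapted to the three target decay rates. Writing $a := \frac{n}{2}\big(\frac1r-\frac12\big)$, I define, on continuous curves in $H^{s,0}\cap H^{0,\alpha}$, the complete norm
\[
\|u\|_X := \sup_{t\ge 0}\Big(\langle t\rangle^{a}\|u(t)\|_{L^2} + \langle t\rangle^{a+\frac s2}\||\nabla|^s u(t)\|_{L^2} + \langle t\rangle^{a-\frac\alpha2}\||\cdot|^\alpha u(t)\|_{L^2}\Big),
\]
and let $\Phi(u)$ denote the right-hand side of the integral equation. I will show that for $\varepsilon$ small $\Phi$ is a contraction on the ball $B_M=\{u:\|u\|_X\le M\}$ with $M = C\varepsilon(\|u_0\|_{H^{s,0}\cap H^{0,\alpha}} + \|u_1\|_{H^{s-1,0}\cap H^{0,\alpha}})$; the fixed point is then the global mild solution, and the three decay estimates are read off directly from $\|u\|_X\le M$.

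The linear part of $\Phi$ is controlled by the linear $L^r$--$L^2$ decay estimates (and their $|\nabla|^s$ and weighted variants) for $\mathcal{D}(t)$ and $(\partial_t+1)\mathcal{D}(t)$ established earlier, which give $\|(\partial_t+1)\mathcal{D}(\cdot)\varepsilon u_0 + \mathcal{D}(\cdot)\varepsilon u_1\|_X\le M/2$. For the nonlinear term the two basic interpolation tools are: (a) the Gagliardo--Nirenberg--Sobolev inequality $\|u\|_{L^q}\lesssim\|u\|_{L^2}^{1-\theta}\||\nabla|^s u\|_{L^2}^{\theta}$ for $q\in[2,\frac{2n}{n-2s}]$, which recovers the sharp time decay at high integrability; and (b) the weighted H\"older inequality $\|u\|_{L^m}\lesssim\|\langle\cdot\rangle^{\alpha'}u\|_{L^2}$, valid whenever $\alpha'>n\big(\frac1m-\frac12\big)$, combined with the weighted-$L^2$ interpolation $\|\langle\cdot\rangle^{\alpha'}u\|_{L^2}\le\|u\|_{L^2}^{1-\alpha'/\alpha}\||\cdot|^\alpha u\|_{L^2}^{\alpha'/\alpha}$ (using $\langle x\rangle^{\alpha}\lesssim 1+|x|^{\alpha}$). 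Because $\alpha>n\big(\frac1r-\frac12\big)$ \emph{strictly}, one may take $\alpha'$ arbitrarily close to $n\big(\frac1r-\frac12\big)$, so (b) reproduces the sharp low-integrability decay up to an arbitrarily small loss $\langle t\rangle^{\delta}$.

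For the Duhamel term I use $|\mathcal{N}(u)|\lesssim|u|^p$, so $\|\mathcal{N}(u(\tau))\|_{L^r}\lesssim\|u(\tau)\|_{L^{pr}}^p$. The upper restrictions on $p$, namely $pr\le\frac{2n}{n-2s}$ (equivalently $p\le\frac{2n}{r(n-2s)}$) and $p\le 1+\frac{2}{n-2s}$, are exactly the admissibility conditions making (a) applicable at $L^{pr}$ and making the fractional-Leibniz estimate of $\||\nabla|^s\mathcal{N}(u)\|$ close. Tools (a)--(b) then yield $\|u(\tau)\|_{L^{pr}}\lesssim\langle\tau\rangle^{-a-\frac n2(\frac12-\frac1{pr})+\delta}\|u\|_X=\langle\tau\rangle^{-\frac{n}{2r}(1-\frac1p)+\delta}\|u\|_X$, whence $\|\mathcal{N}(u(\tau))\|_{L^r}\lesssim\langle\tau\rangle^{-\frac{n}{2r}(p-1)+\delta'}\|u\|_X^p$, together with the analogous bounds needed for the $L^2$, $|\nabla|^s$, and weighted components. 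Inserting these into the linear decay estimates and splitting $\int_0^t=\int_0^{t/2}+\int_{t/2}^t$, the decisive point is the convergence of $\int_0^{t/2}\langle\tau\rangle^{-\frac{n}{2r}(p-1)+\delta'}\,d\tau$, which holds precisely when $\frac{n}{2r}(p-1)>1$, i.e. $p>1+\frac{2r}{n}$; this is where the subcriticality hypothesis is consumed. The bookkeeping then gives $\|\Phi(u)\|_X\le M/2 + C\|u\|_X^{p}$, so $\Phi:B_M\to B_M$ for $\varepsilon$ small.

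The contraction estimate repeats these computations on $\mathcal{N}(u)-\mathcal{N}(v)$ via the difference bound in \eqref{nonlin} with $l=0$, producing $\|\Phi(u)-\Phi(v)\|_X\lesssim(\|u\|_X+\|v\|_X)^{p-1}\|u-v\|_X\le CM^{p-1}\|u-v\|_X$, a genuine contraction once $M$ (hence $\varepsilon$) is small. The fixed point is the unique global mild solution in $X$; it coincides on $[0,T)$ with the local solution of Theorem \ref{thm_lwp}, which gives $u\in C([0,\infty);H^{s,0}\cap H^{0,\alpha})$, and the stated decay estimates follow from $\|u\|_X\lesssim\varepsilon$. I expect the main obstacle to be the nonlinear estimate in the weighted norm: it requires a moment-type weighted linear estimate for $\mathcal{D}(t-\tau)$ producing exactly the borderline factor $\langle t\rangle^{-a+\frac\alpha2}$, used together with the optimal weight-interpolation (b); moreover the endpoint $p=1+\frac{2r}{n}$ (admissible only for $r\in(1,2]$) must be handled separately, since there $\int_0^{t/2}\langle\tau\rangle^{-1}\,d\tau$ diverges only logarithmically and the loss $\delta'$ has to be absorbed by exploiting the strict inequality $\alpha>n\big(\frac1r-\frac12\big)$.
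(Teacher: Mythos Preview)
Your plan coincides with the paper's: contraction mapping in exactly this time-weighted space (the paper's $X(\infty)$), with the Duhamel term split as $\int_0^{t/2}+\int_{t/2}^t$ and controlled via $L^\gamma$--$L^2$ linear decay for $\mathcal{D}(t)$. One minor point: the $\delta$-loss in your tool (b) is avoidable. The paper proves the \emph{sharp} interpolation $\langle t\rangle^{\frac{n}{2}(\frac1r-\frac1\gamma)}\|u(t)\|_{L^\gamma}\lesssim\|u\|_X$ for all $r\le\gamma\le\frac{2n}{(n-2s)_+}$ with no loss, via the inequality $\|\varphi\|_{L^\gamma}\lesssim\|\varphi\|_{L^2}^{1-\theta}\||\cdot|^\beta\varphi\|_{L^2}^{\theta}$ with $\theta=\frac{n}{\beta}\cdot\frac{2-\gamma}{2\gamma}$ for any $\beta\in\big(n(\tfrac1\gamma-\tfrac12),\alpha\big]$; the $\beta$-dependence cancels in the time exponents. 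For $p>1+\frac{2r}{n}$ your argument then goes through cleanly.

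The genuine gap is at the endpoint $p=1+\frac{2r}{n}$, $r\in(1,2]$: the mechanism you propose is not the right one. The strict inequality $\alpha>n(\frac1r-\frac12)$ plays no role here---it is equally strict when $r=1$, where the endpoint genuinely fails. What the paper exploits instead is that $|u|^p$ has \emph{better spatial integrability} than $u$: one estimates $\|\mathcal{N}(u(\tau))\|_{L^{\sigma_1}}$ with $\sigma_1=\max\{1,\frac{nr}{n+r}\}<r$, controlled by $\|u\|_{L^{p\sigma_1}}^p$ where $p\sigma_1\ge r$ (this follows from the standing hypothesis $p\ge 1+\frac{r}{n}$, so the sharp interpolation above applies), and then uses the linear $L^{\sigma_1}$--$L^2$ estimate. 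On $[0,t/2]$ the time integrand becomes $\langle\tau\rangle^{-\frac{n}{2}(\frac{p}{r}-\frac{1}{\sigma_1})}$ with exponent strictly below $1$, and the stronger kernel decay $\langle t-\tau\rangle^{-\frac{n}{2}(\frac{1}{\sigma_1}-\frac12)}$ exactly compensates to yield the target $\langle t\rangle^{-a}$ without any logarithm. The lever is $r>1$ (so that $\sigma_1<r$), not the slack in $\alpha$.
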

%%%%%%%%%%%%%%%%%%%%%%%%%%%%%%%%%%%%%%%

%%%%%%%%%%%%%%%%%%%%%%%%%%%%%%%%%%%%%%%
%\begin{remark}
%To make sure that we can take $p$ in the intervals
%in the assumptions of Theorem \ref{thm_gwp},
%we require that
%$s$
%satisfies
%$s\neq 0$
%and that
%$r, p$
%satisfies
%$r\in [1,\frac{n \min\{ n, 2 \}}{2(n-2s)})$, $1+\frac{2r}{n}<p \le 1+\frac{\min\{n,2\}}{n-2s}$
%or
%$r\in (1,\frac{n \min\{ n, 2 \}}{2(n-2s)}]$, $p=1+\frac{2r}{n}$
%when
%$n>4s$.
%\end{remark}
%%%%%%%%%%%%%%%%%%%%%%%%%%%%%%%%%%%%%%%

Next, we study the asymptotic behavior of the global solutions.
To state our result, we denote
$G(t,x)= (4\pi t)^{-n/2} \exp\left( -\frac{|x|^2}{4t} \right)$
and
$\mathcal{G}(t)\phi = G(t)\ast \phi$.

%%%%%%%%%%%%%%%%%%%%%%%%%%%%%%%%%%%%%%%%%%%%
\begin{theorem}[Asymptotic behavior of global solutions]\label{thm_asy}
Under the assumption of Theorem \ref{thm_gwp},
we assume that $p>1+\frac{2r}{n}$ if $r\in (1,2]$.
Let $m$ be
\begin{align}
\label{m}
	r\le m \le \frac{2n}{n-2s}\ \left(s<\frac{n}{2} \right),\quad
	r\le m < \infty \ \left( s=\frac{n}{2} \right),\quad
	r\le m \le \infty\ \left( s > \frac{n}{2} \right)
\end{align}
and let
$\varsigma \in (0,1)$ is an arbitrarily small number.
Then, the the global solution $u$ of \eqref{nldw}
constructed in Theorem \ref{thm_gwp}
satisfies the following
asymptotic behavior:
When $r>1$, we have
\begin{align}
\label{or}
	\| u(t) - \varepsilon \mathcal{G}(t)(u_0+u_1) \|_{L^m}
	\lesssim \langle t \rangle^{%
			-\frac{n}{2}\left( \frac{1}{r}-\frac{1}{m} \right)%
		-\min\{ \frac{n}{2}\left(1-\frac{1}{r}\right), \frac12, \frac{n}{2r}(p-1)-1 \} %
		+ \varsigma}%
\end{align}
for $t \ge 1$.
When $r=1$, we have
\begin{align}
\label{o1}
	\| u(t) - \theta G(t) \|_{L^m}
	\lesssim \langle t \rangle^{%
			-\frac{n}{2}\left( 1-\frac{1}{m} \right)%
		-\min\{ \frac{\alpha}{2} - \frac{n}{4}, \frac12, \frac{n}{2}(p-1)-1 \} %
		+ \varsigma}
\end{align}
for $t\ge 1$, where
\begin{align}
\label{theta}
	\theta = \varepsilon \int_{\mathbb{R}^n} (u_0+u_1)(x) dx
	+ \int_0^{\infty} \int_{\mathbb{R}^n} \mathcal{N}(u) dx dt.
\end{align}
\end{theorem}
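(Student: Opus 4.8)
The plan is to run a diffusion-phenomenon argument directly on the Duhamel formula, comparing each constituent of the mild solution with the corresponding heat flow. Starting from $u(t)=(\partial_t+1)\mathcal{D}(t)\varepsilon u_0+\mathcal{D}(t)\varepsilon u_1+\int_0^t\mathcal{D}(t-\tau)\mathcal{N}(u(\tau))\,d\tau$, I would write $u(t)-(\text{profile})$ as a \emph{linear part} plus a \emph{nonlinear part} and bound each in $L^m$, using the global decay estimates of Theorem~\ref{thm_gwp} together with the linear $L^\rho$--$L^m$ and diffusion-phenomenon estimates for $\mathcal{D}(t)$. The organizing principle is that $(\partial_t+1)\mathcal{D}(t)\phi$ and $\mathcal{D}(t)\phi$ each coincide with $\mathcal{G}(t)\phi$ up to a remainder decaying a factor $\langle t\rangle^{-1/2}$ faster, and the same gain holds when $\mathcal{D}(t-\tau)$ is replaced by $\mathcal{G}(t-\tau)$ inside the integral; this single mechanism produces the $\tfrac12$ appearing in each minimum and reduces everything to the heat-flow quantities $\varepsilon\mathcal{G}(t)(u_0+u_1)$ and $\int_0^t\mathcal{G}(t-\tau)\mathcal{N}(u(\tau))\,d\tau$.

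For the linear part this already identifies the profile $\varepsilon\mathcal{G}(t)(u_0+u_1)$ up to the $\langle t\rangle^{-1/2}$ error. When $r=1$ I would additionally replace $\mathcal{G}(t)(u_0+u_1)$ by $\big(\int_{\mathbb{R}^n}(u_0+u_1)\,dx\big)G(t)$ via the first-moment expansion of the Gaussian; since $u_0,u_1\in H^{0,\alpha}$ with $\alpha>n/2$, a Cauchy--Schwarz argument (valid because $2\alpha>n$) turns the weighted bound into control of the relevant spatial moment and supplies the extra decay $\tfrac{\alpha}{2}-\tfrac{n}{4}$ of \eqref{o1}. When $r>1$ the datum carries no finite spatial mass, so $\varepsilon\mathcal{G}(t)(u_0+u_1)$ is retained unchanged as the profile and the linear remainder contributes only through the $\tfrac12$.

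For the nonlinear part I analyze $\int_0^t\mathcal{G}(t-\tau)\mathcal{N}(u(\tau))\,d\tau$. The bound $|\mathcal{N}(u)|\lesssim|u|^p$ from \eqref{nonlin} and interpolation of the Theorem~\ref{thm_gwp} estimates give, for each admissible $q$, the decay $\|\mathcal{N}(u(\tau))\|_{L^q}=\|u(\tau)\|_{L^{pq}}^p\lesssim\langle\tau\rangle^{-\frac{np}{2}\left(\frac{1}{r}-\frac{1}{pq}\right)}$; in particular $\|\mathcal{N}(u(\tau))\|_{L^1}\lesssim\langle\tau\rangle^{-\left(\frac{np}{2r}-\frac{n}{2}\right)}$. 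When $r=1$ this exponent equals $\frac{n}{2}(p-1)>1$ (as $p>1+\tfrac{2}{n}$), so $\int_0^\infty\!\int_{\mathbb{R}^n}\mathcal{N}(u)\,dx\,d\tau$ converges and equals the second term of $\theta$ in \eqref{theta}; writing $\mathcal{G}(t-\tau)\mathcal{N}(u(\tau))=\big(\int_{\mathbb{R}^n}\mathcal{N}(u(\tau))\,dx\big)G(t-\tau)+(\text{remainder})$, replacing $G(t-\tau)$ by $G(t)$, and estimating the tail $\int_t^\infty$, whose size is $\langle t\rangle^{-(\frac{n}{2}(p-1)-1)}$, yields the nonlinear entry of \eqref{o1}. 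When $r>1$ no mass can be extracted and this term is pure error: I split the time integral at $\tau=t/2$, use the strong $L^1$--$L^m$ smoothing of $\mathcal{G}$ on $[0,t/2]$ and a mild $L^q$--$L^m$ smoothing with $q$ close to $m$ on $[t/2,t]$ (to keep the $(t-\tau)^{-\frac{n}{2}(1/q-1/m)}$ singularity integrable). Both regions decay faster than the profile by $\tfrac{n}{2r}(p-1)-1$, while the early part is in addition capped at the rate $\langle t\rangle^{-\frac{n}{2}(1-1/m)}$ of an $L^1$-mass Gaussian, i.e.\ a gain of $\tfrac{n}{2}(1-1/r)$; these are exactly the two remaining entries of the minimum in \eqref{or}.

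The main obstacle is precisely this $r>1$ nonlinear estimate: the naive $L^r$--$L^m$ smoothing only recovers the profile rate, so one must exploit the fact that each $\mathcal{N}(u(\tau))$ lies in $L^1$ (even though it need not be time-integrable in $L^1$) and choose the Lebesgue exponent in the heat estimate region-by-region, balancing the decay of $\|\mathcal{N}(u(\tau))\|$ against the $(t-\tau)$-singularity near $\tau=t$, which ceases to be integrable once $\tfrac{n}{2}(1-1/m)\ge1$. Finally, at the endpoints of the admissible $(p,m)$ range and wherever two of the three exponents in the minimum coincide, the time integrals sit at a logarithmic threshold; the arbitrarily small loss $\langle t\rangle^{\varsigma}$ is inserted to absorb these logarithms. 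Collecting the data, diffusion, and nonlinear contributions and taking their minimum then gives \eqref{or} and \eqref{o1}.
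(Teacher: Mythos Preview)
Your overall strategy matches the paper's: compare $u$ with the inhomogeneous heat flow via a diffusion-phenomenon estimate, then analyze the heat Duhamel term. The paper packages this as two propositions (first $u\approx v$ where $v$ solves $(\partial_t-\Delta)v=\mathcal{N}(u)$ with data $\varepsilon(u_0+u_1)$, then $v\approx\text{profile}$), but the content is the same, and your treatment of the $r=1$ case is essentially what the paper does.

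There is, however, a genuine gap in your $r>1$ analysis, and it stems from a misdiagnosis of where the $\tfrac12$ in the minimum comes from. The pointwise diffusion-phenomenon gain for $\mathcal{D}(t)-\mathcal{G}(t)$ is $\langle t\rangle^{-1}$, not $\langle t\rangle^{-1/2}$: the low-frequency symbol satisfies $e^{-t/2}L(t,\xi)-e^{-t|\xi|^2}=e^{-t|\xi|^2}O(|\xi|^2)$, which buys a full extra power (this is the paper's Lemma~3.2). After integrating the Duhamel term, this step produces a gain of $\min\{1,\frac{n}{2r}(p-1)-\frac12\}$, which always \emph{exceeds} the rate in \eqref{or}; the diffusion comparison is never the bottleneck and does not explain the $\tfrac12$.

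The actual source of the $\tfrac12$ is precisely your assertion that ``each $\mathcal{N}(u(\tau))$ lies in $L^1$'', which can fail. Under the hypotheses one may have $p<r$ (for instance $n=5$, $r=2$, $s$ slightly above $5/4$, $\alpha>0$ small, $p\in(1.8,1.83)$), and then the $X$-norm only places $u(\tau)$ in $L^\gamma$ for $\gamma\ge r>p$, so $|u|^p\notin L^1$ and your $L^1$--$L^m$ smoothing on $[0,t/2]$ is unavailable. The paper's remedy is to run the early-time heat estimate in $L^{\sigma_1}$ with $\sigma_1=\max\{1,\frac{nr}{n+r}\}$, the smallest exponent for which $p\sigma_1\ge r$ guarantees control of $\|\mathcal{N}(u)\|_{L^{\sigma_1}}$. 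When $\sigma_1=\frac{nr}{n+r}>1$ the resulting smoothing gain over the profile is $\frac{n}{2}\bigl(\frac{1}{\sigma_1}-\frac{1}{r}\bigr)=\tfrac12$, and \emph{this} is where the $\tfrac12$ entry in \eqref{or} originates. Once you replace $L^1$ by $L^{\sigma_1}$ in your early-region estimate, your argument goes through and coincides with the paper's.
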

%%%%%%%%%%%%%%%%%%%%%%%%%%%%%%%%%%%%%%%%%%%%
\begin{remark}
(i) The case $r=1, m\ge 2$
was studied by
Hayashi, Kaikina and Naumkin \cite{HaKaNa04},
though in this paper, we refine their argument
(see Section 3.3 and Section 3.4).

(ii)
When
$r>1$,
as we will see in the proof,
the nonlinear term
$\mathcal{N}(u)$
has better spatial integrability than the linear part.
Hence, the nonlinear term decays faster as time tends to infinity
and does not affect the asymptotic profile.

(iii)
When
$n\le 3$ and
the initial data behaves like
$\langle x \rangle^{-k}$
as $|x| \to \infty$,
a similar asymptotic behavior was obtained by
Narazaki and Nishihara \cite{NaNi08}.
The above theorem generalizes the result of \cite{NaNi08}
to all $n\ge 1$ and more general initial data,
while the class of the solution is slightly different.
\end{remark}

In the critical or subcritical case,
we also have the estimate of the lifespan from below.
We define the lifespan of the solution of \eqref{nldw} by
\begin{align*}
	T(\varepsilon)
	&:= \sup \{ T \in (0,\infty) ; \mbox{there exists a  mild solution}
	\ u \in C([0,T) ; H^{s,0}(\mathbb{R}^n)\cap H^{0,\alpha}(\mathbb{R}^n)) \}.
\end{align*}

%%%%%%%%%%%%%%%%%%%%%%%%%%%%%%%%%%%%%%%%%%%%
\begin{theorem}[Lower bound of the lifespan]\label{thm_lflow}
In addition to the assumption in Theorem \ref{thm_lwp},
we assume that
\begin{align}
\tag{Case 1}
	r\in [1,2],\quad
	\min\left\{ 1+\frac{r}{2}, 1+\frac{r}{n} \right\} \le p < 1+\frac{2r}{n}
\end{align}
or
\begin{align}
\tag{Case 2}
	 r = 1,\quad p= 1+\frac{2r}{n}.
\end{align}
Then, there exists
$\varepsilon_1=
\varepsilon_1(n,p,r,s,\alpha,
\| u_0\|_{H^{s,0}\cap H^{0,\alpha}}, \|u_1\|_{H^{s-1,0}\cap H^{0,\alpha}}) >0$
such that for any
$\varepsilon \in (0,\varepsilon_1]$,
the lifespan
$T=T(\varepsilon)$
of the solution is estimated as
\begin{align*}
	T(\varepsilon) \ge
	\begin{cases}
	C\varepsilon^{-1/\omega} &\mbox{in {\rm Case 1}},\\
	\exp\left( C\varepsilon^{-(p-1)} \right) &\mbox{in {\rm Case 2}},
	\end{cases}
\end{align*}
where
$\omega = \frac{1}{p-1} - \frac{n}{2r}$
and $C>0$ is a positive constant independent of
$\varepsilon \in (0,\varepsilon_1]$.
\end{theorem}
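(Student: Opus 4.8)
The plan is to run the contraction-type a priori estimate that underlies the global well-posedness proof of Theorem~\ref{thm_gwp}, but on a \emph{finite} interval $[0,T]$, keeping careful track of the power of $T$ generated by the Duhamel term in the subcritical regime. I would work with the time-weighted quantity
\[
	M(T) := \sup_{0\le t\le T}\Big(
	\langle t\rangle^{\frac{n}{2}(\frac1r-\frac12)}\|u(t)\|_{L^2}
	+ \langle t\rangle^{\frac{n}{2}(\frac1r-\frac12)+\frac{s}{2}}\||\nabla|^s u(t)\|_{L^2}
	+ \langle t\rangle^{\frac{n}{2}(\frac1r-\frac12)-\frac{\alpha}{2}}\||\cdot|^{\alpha} u(t)\|_{L^2}
	\Big),
\]
whose three weights are exactly those appearing in the decay estimates of Theorem~\ref{thm_gwp}; since $\langle t\rangle\approx 1$ near $t=0$ and each weight stays finite on $[0,T]$, controlling $M(T)$ controls the full $H^{s,0}\cap H^{0,\alpha}$ norm pointwise on $[0,T]$. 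Applying the $L^p$--$L^q$ type estimates for $\mathcal{D}(t)$ and $(\partial_t+1)\mathcal{D}(t)$ already used to prove Theorems~\ref{thm_lwp} and~\ref{thm_gwp} to the first two terms of the mild formulation, and using that the linear flow decays at precisely the rates built into $M$, the linear contribution to $M(T)$ is bounded by $C_1\varepsilon$, uniformly in $T$.

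Next I estimate the nonlinear Duhamel term $\int_0^t \mathcal{D}(t-\tau)\mathcal{N}(u(\tau))\,d\tau$. Here I would use the pointwise structure \eqref{nonlin} (giving $|\mathcal{N}(u)|\lesssim|u|^p$, together with the fractional Leibniz/chain estimates for $|\nabla|^s\mathcal{N}(u)$ afforded by $\mathcal{N}\in C^{p_0}$), insert the weighted bounds defining $M(T)$, split the time integral at $\tau=t/2$, and apply the smoothing estimates for $\mathcal{D}(t-\tau)$. In the subcritical Case~1 the resulting time integral is no longer convergent as $T\to\infty$ and produces a factor $\langle T\rangle^{\sigma}$ with $\sigma=1-\frac{n(p-1)}{2r}>0$, whereas at the critical value in Case~2 ($r=1$, $p=1+\frac{2}{n}$) one has $\sigma=0$ and the borderline integral yields instead a logarithmic factor $\log(e+T)$. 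This bookkeeping, and especially the handling of the weighted $\||\cdot|^{\alpha}u\|$ component and of the fractional derivative in the source term, is the step I expect to be the main obstacle.

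Combining the two estimates gives
\[
	M(T)\le C_1\varepsilon + C_2\langle T\rangle^{\sigma}M(T)^{p}
	\qquad(\text{Case 1}),
\]
and the same bound with $\langle T\rangle^{\sigma}$ replaced by $\log(e+T)$ in Case~2. Since $t\mapsto M(t)$ is continuous and nondecreasing (by the local theory of Theorem~\ref{thm_lwp}), a standard continuity argument keeps $M(T)\le 2C_1\varepsilon$ as long as $C_2\langle T\rangle^{\sigma}(2C_1\varepsilon)^{p-1}\le\frac12$. Solving this inequality for $T$ and using the algebraic identity $\frac{p-1}{\sigma}=\frac{1}{\omega}$ (which holds precisely because $\sigma=\frac{2r-n(p-1)}{2r}$) yields $T\gtrsim\varepsilon^{-1/\omega}$ in Case~1, while in Case~2 the condition $\log(e+T)\lesssim\varepsilon^{-(p-1)}$ gives $T\gtrsim\exp(C\varepsilon^{-(p-1)})$. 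Because the $H^{s,0}\cap H^{0,\alpha}$ norm stays finite on this range of $T$, the blow-up alternative of Theorem~\ref{thm_lwp} guarantees that the lifespan $T(\varepsilon)$ is at least this large; choosing $\varepsilon_1$ small enough that the local solution exists and that the continuity argument can be started completes the proof.
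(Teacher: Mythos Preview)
Your proposal is correct and follows essentially the same approach as the paper: your quantity $M(T)$ is exactly the paper's $\|\cdot\|_{X(T)}$ norm, and the a priori estimate $M(T)\le C_1\varepsilon + C_2\langle T\rangle^{\sigma}M(T)^p$ (with $\sigma=1-\tfrac{n}{2r}(p-1)$, resp.\ a log in Case~2) is precisely what the paper obtains by combining its Lemmas~\ref{lem_duh} and~\ref{lem_nl}, after which both arguments conclude by the same continuity/bootstrap step. The only cosmetic difference is that the paper phrases the continuity argument as ``let $\widetilde T(\varepsilon)$ be the first time $\|u\|_{X(T)}=2C_0I_0\varepsilon$,'' which is equivalent to your formulation.
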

%%%%%%%%%%%%%%%%%%%%%%%%%%%%%%%%%%%%%%%%%%%%

%%%%%%%%%%%%%%%%%%%%%%%%%%%%%%%%%%%%%%%%%%%%
\begin{remark}
The estimate in {\rm Case 2} was proved by
the first author and Ogawa \cite{IkeOg}.
\end{remark}
%%%%%%%%%%%%%%%%%%%%%%%%%%%%%%%%%%%%%%%%%%%%

Finally, we prove a blow-up result
in the subcritical case
with the nonlinearity
$\mathcal{N}(u) = \pm |u|^p$.
%%%%%%%%%%%%%%%%%%%%%%%%%%%%%%%%%%%%%%%%%%%%
\begin{theorem}[Upper bound of the lifespan]\label{thm_lfupp}
In addition to the assumptions in Theorem \ref{thm_lwp},
we assume that
\begin{align}
\label{bunl}
	\mathcal{N}(u) = \pm |u|^p
	\quad \mbox{and}\quad p < 1+\frac{2r}{n}
\end{align}
and
$\alpha$
satisfies
\[
	n\left( \frac{1}{r} - \frac{1}{2} \right) < \alpha < \frac{2}{p-1} - \frac{n}{2}.
\]
Moreover, we take the initial data
\[
	u_0 \in H^{s,0}(\mathbb{R}^n) \cap H^{0,\alpha} (\mathbb{R}^n),\quad
	u_1 \in H^{s-1,0}(\mathbb{R}^n) \cap H^{0,\alpha} (\mathbb{R}^n)
\]
fulfilling
\[
	\pm (u_0(x) + u_1(x)) \ge
	\begin{cases}
	|x|^{-\lambda}&\mbox{if}\quad |x| > 1,\\
	0&\mbox{if}\quad |x|\le 1
	\end{cases}
\]
(double-sign corresponds to \eqref{bunl})
with some
$\lambda$
satisfying
$\frac{n}{2} + \alpha < \lambda < \frac{2}{p-1}$.
Then, there exists
$\varepsilon_2=\varepsilon_2(n,p,r,s,\alpha,\lambda)>0$
such that for any
$\varepsilon \in (0,\varepsilon_2]$,
the lifespan
$T=T(\varepsilon)$
of the solution is estimated as
\[
	T(\varepsilon) \le C \varepsilon^{-1/\kappa},
\]
where
$\kappa = \frac{1}{p-1} - \frac{\lambda}{2}$
and $C$ is a positive constant independent of
$\varepsilon \in (0,\varepsilon_2]$.
\end{theorem}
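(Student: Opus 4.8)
The plan is to use the test-function method in a weighted, parabolically rescaled form. Replacing $u$ by $-u$ interchanges the two signs in \eqref{bunl} and in the data hypothesis, so we may assume the upper sign, i.e. $\mathcal N(u)=|u|^p$ and $u_0+u_1\ge |x|^{-\lambda}$ for $|x|>1$. The first task is to check that the mild solution of Theorem \ref{thm_lwp} is a weak solution: for every $\Psi\in C_c^\infty([0,T)\times\mathbb R^n)$,
\[
 \int_0^T\!\!\int_{\mathbb R^n} u\,(\partial_t^2\Psi-\Delta\Psi-\partial_t\Psi)\,dx\,dt
 -\varepsilon\!\int_{\mathbb R^n}(u_0+u_1)\Psi(0)\,dx
 +\varepsilon\!\int_{\mathbb R^n}u_0\,\partial_t\Psi(0)\,dx
 =\int_0^T\!\!\int_{\mathbb R^n}|u|^p\Psi\,dx\,dt .
\]
This follows by testing the Duhamel formula against $\Psi$ and integrating by parts; the inclusion $u\in C([0,T);H^{s,0}\cap H^{0,\alpha})$ together with the embedding $H^{s,0}\hookrightarrow L^p_{\mathrm{loc}}$ (guaranteed by the exponent restrictions of Theorem \ref{thm_lwp}, since $p\le \tfrac{2n}{r(n-2s)}\le\tfrac{2n}{n-2s}$) makes every integral converge and the spatial boundary terms vanish.

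Next I fix the test function. Let $\eta,\psi$ be smooth cut-offs with $\eta=1$ on $[0,\tfrac12]$, $\mathrm{supp}\,\eta\subset[0,1)$, $\psi=1$ on $\{|y|\le1\}$, $\mathrm{supp}\,\psi\subset\{|y|<2\}$, and set, with $p'=\tfrac{p}{p-1}$ and $R=\sqrt T$,
\[
 \Psi(t,x)=\eta\!\left(\tfrac tT\right)^{2p'}\psi\!\left(\tfrac xR\right)^{2p'}.
\]
Because $\eta\equiv1$ near $t=0$ we have $\partial_t\Psi(0)=0$, which deletes the $u_0$ term, while $\Psi(0)\ge0$ and the sign hypothesis give
\[
 \varepsilon\!\int_{\mathbb R^n}(u_0+u_1)\Psi(0)\,dx
 \ \gtrsim\ \varepsilon\!\int_{1<|x|<R}|x|^{-\lambda}\,dx
 \ \gtrsim\ \varepsilon\,T^{\frac{n-\lambda}{2}}
\]
for $T\ge T_0$ (when $\lambda\ge n$ the middle integral is bounded below by a constant, and $T^{(n-\lambda)/2}\le1$, so the bound still holds). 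Writing $I=\int_0^T\!\int|u|^p\Psi\,dx\,dt$ and applying H\"older's inequality with exponents $p,p'$,
\[
 \Bigl|\int_0^T\!\!\int_{\mathbb R^n} u\,(\partial_t^2\Psi-\Delta\Psi-\partial_t\Psi)\,dx\,dt\Bigr|
 \ \le\ I^{1/p}\Bigl(\int_0^T\!\!\int_{\mathbb R^n}\bigl|\partial_t^2\Psi-\Delta\Psi-\partial_t\Psi\bigr|^{p'}\Psi^{-p'/p}\,dx\,dt\Bigr)^{1/p'}.
\]
The choice $R=\sqrt T$ balances $|\partial_t\Psi|,|\Delta\Psi|\lesssim T^{-1}$ ($\partial_t^2\Psi$ being of lower order $T^{-2}$), while the power $2p'$ in the cut-offs makes the exponents of $\eta,\psi$ appearing in $\Psi^{-p'/p}|\partial_t^2\Psi-\Delta\Psi-\partial_t\Psi|^{p'}$ positive; hence that integral is finite and scales like $T^{-p'}\cdot T\cdot R^{n}=T^{\,1-p'+n/2}$. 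The derivative terms are therefore $\lesssim I^{1/p}\,T^{\frac1{p'}(1-p'+\frac n2)}$.

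Combining the identity with these two bounds and absorbing $I^{1/p}$ by Young's inequality $I^{1/p}A\le \tfrac12 I+CA^{p'}$ gives $\varepsilon\,T^{\frac{n-\lambda}{2}}\lesssim T^{\,1-p'+n/2}$. Since $p'-1=\tfrac1{p-1}$, this reads $\varepsilon\lesssim T^{-\kappa}$ with $\kappa=\tfrac1{p-1}-\tfrac\lambda2>0$ by the hypothesis $\lambda<\tfrac2{p-1}$. Consequently, if the solution existed on $[0,T]$ with $T>C\varepsilon^{-1/\kappa}$ for $C$ large, the previous display would be violated, and this contradiction yields $T(\varepsilon)\le C\varepsilon^{-1/\kappa}$. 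The main obstacle I anticipate is the first step: rigorously passing from the mild to the weak formulation for data merely in $H^{s,0}\cap H^{0,\alpha}$ (not in $L^1$). One must verify the uniform local integrability of $|u|^p$ on $[0,T]$ and the vanishing of the boundary contribution at spatial infinity; here the weighted control $\||\cdot|^{\alpha}u\|_{L^2}<\infty$ and the compactness of $\mathrm{supp}\,\Psi$ are essential. The compatibility constraint $\tfrac n2+\alpha<\lambda$ enters only to ensure that a datum behaving like $|x|^{-\lambda}$ indeed lies in $H^{0,\alpha}$, not in the blow-up mechanism itself, and one checks directly that $R=\sqrt T$ maximizes the ratio of the data term to the derivative bound, so the remaining steps are routine bookkeeping.
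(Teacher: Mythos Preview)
Your proposal is correct and follows essentially the same route as the paper: first show that the mild solution is a weak solution (the paper does this by approximating $\mathcal N(u)$ in $L^\infty(0,T_1;L^q)$ and passing to the limit in the Duhamel formula), then apply the test-function method with parabolic scaling $R=\sqrt{T}$ to derive $\varepsilon\lesssim T^{-\kappa}$. The only cosmetic differences are that the paper keeps $\tau$ and $R$ as independent parameters and optimizes at the end, and that it states the cutoff properties abstractly as $|\eta^{(j)}|\lesssim\eta^{1/p}$ rather than using explicit powers $\eta^{2p'}$; your choice makes the $\Psi^{-p'/p}$ factor manifestly bounded, which is the same content. Your treatment of the data term, splitting according to whether $\lambda\gtrless n$, is also equivalent to the paper's scaling argument $J_R\gtrsim\varepsilon R^{n-\lambda}$.
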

%%%%%%%%%%%%%%%%%%%%%%%%%%%%%%%%%%%%%%%%%%%%
\begin{remark}
(i) The case $r=1$ and $p\le 1+ \frac{2r}{n}$ was studied
by Todorova and Yordanov \cite{ToYo01}, Zhang \cite{Zh01},
Li and Zhou \cite{LiZh95}, Nishihara \cite{Ni03Ib},
the first author and Ogawa \cite{IkeOg},
and the first author and the third author \cite{IkeWa15}.
In particular, Theorem \ref{thm_lflow} shows that
when $1+\frac{1}{n}\le p<1+\frac{2}{n}$,
the lifespan $T(\varepsilon)$
is estimates as
$T(\varepsilon) \sim \varepsilon^{-1/\omega}$
with $\omega = \frac{1}{p-1}-\frac{n}{2}$.

(ii) When $r>1$, $p=1+\frac{2r}{n}$ and
the initial data belong to $L^r(\mathbb{R}^n)$
but not $H^{0,\alpha}(\mathbb{R}^n)$,
it is still an open problem whether the local solution blows up or not.

(iii)
When $\mathcal{N}(u) = |u|^{p-1}u$,
the blow-up of the solution was proved by
Ikehata and Ohta \cite{IkOh02},
while estimates of lifespan were not obtained.
Theorems \ref{thm_lflow} and \ref{thm_lfupp} give
an almost optimal estimate of lifespan
for $\mathcal{N}(u)=\pm |u|^p$.

\end{remark}
%%%%%%%%%%%%%%%%%%%%%%%%%%%%%%%%%%%%%%%%%%%%

Our results are summarized in Table 1 below.
\begin{table}[!h]
\begingroup
\renewcommand{\arraystretch}{1.8}
\begin{tabular}{|c|c|c|c|} \hline
	$r \backslash p$
		&  $\displaystyle \min\left\{ 1+\frac{r}{2} , 1+\frac{r}{n} \right\}$  $\cdots $
		&  $\displaystyle 1+\frac{2r}{n}$
		&  $\cdots $  $\displaystyle 1+\frac{2}{n-2s}$ \\[4pt]
	\hline \hline
	$1$ & SDBU \cite{ToYo01}& SDBU \cite{Zh01} & SDGE \cite{HaKaNa04}\\
	& $ \varepsilon^{-1/\omega} \lesssim T(\varepsilon) \lesssim \varepsilon^{-1/\kappa}$
		\cite{IkeWa15}
	&  $ e^{C\varepsilon^{-(p-1)}} \lesssim T(\varepsilon) \lesssim e^{C\varepsilon^{-p}}$
		\cite{IkeOg}
	& $T(\varepsilon)=\infty$ \\[5pt]
	\hline
	$r >1$ & SDBU \cite{IkOh02}& SDGE & SDGE \\ 
	& $ \varepsilon^{-1/\omega} \lesssim T(\varepsilon) \lesssim \varepsilon^{-1/\kappa}$
	& $T(\varepsilon)=\infty$ & $T(\varepsilon)=\infty$ \\[5pt]
	\hline
\end{tabular}
\caption{SDBU: Small data blow-up,
SDGE: Small data global existence}
\endgroup
\end{table}

Our strategy for proving Theorems \ref{thm_lwp} and \ref{thm_gwp} are based on
that of Hayashi, Kaikina and Naumkin \cite{HaKaNa04}.
However,
we have to refine their estimates to fit the slowly decaying data and solutions.
The main ingredient is the estimate of the fundamental solution
$\mathcal{D}(t)$ (see \eqref{d} for the definition) of the linear problem,
which are given in Lemma \ref{lem_lin}.
To prove these estimates, we use a gain of one derivative
coming from the high frequency part of the kernel $L(t,\xi)$ of $\mathcal{D}(t)$
(see \eqref{l}).
Combining these estimates and nonlinear estimates with the contraction mapping principle,
we prove the existence of solutions.

To prove Theorem \ref{thm_asy},
we first show that the solution $u$ of the damped wave equation \eqref{nldw}
is approximated by the solution of the linear heat equation
with the homogeneous term $\mathcal{N}(u)$
(see Proposition \ref{prop_asy}).
After that, we investigate the precise asymptotic behavior
of solutions to the inhomogeneous linear heat equation
(see Proposition \ref{prop_asy2}).

For the upper estimates of the lifespan, we employ
a test function method developed by Zhang \cite{Zh01},
while this is based on a contradiction argument and
not directly applicable to obtain the lifespan estimate.
To avoid the contradiction argument,
we use the ideas by Kuiper \cite{Ku03}, Sun \cite{Su10} and \cite{IkeWa15}
to obtain an almost optimal estimate of the lifespan.

The rest of the paper is organized as follows.
In Section 2, we give a proof of Theorems \ref{thm_lwp} and \ref{thm_gwp}.
Section 3 is devoted to the proof of Theorem \ref{thm_asy}.
Theorems \ref{thm_lflow} and \ref{thm_lfupp} will be proved in Section 4.
Finally, we collect some useful lemmas in Appendix.

%\vspace{1em}

%%%%%%%%%%%%%%%%%%%%%%%%%%%%%%%%%%%%%%%%%%%%
%%%%%%%%%%%%%%%%%%%%%%%%%%%%%%%%%%%%%%%%%%%% %1.2
\subsection{Notations}\label{se_no}
For the reader's convenience,
we collect the notations used throughout this paper.
The letter $C$ indicates a generic constant, which may change from line to line.
Let $\langle x \rangle = (1+|x|^2)^{1/2}$.
We also use the symbol $f\lesssim g$, which means that $f\le C g$ holds with some constant $C>0$.
The relation $f \sim g$ stands for $f\lesssim g$ and $g\lesssim f$.
Sometimes we use $a\lor b := \max\{ a, b \}$.

For functions $f=f(x):\mathbb{R}^n \to \mathbb{R}$
and $\phi = \phi (\xi) : \mathbb{R}^n \to \mathbb{R}$,
the Fourier transform and the inverse Fourier transform
are defined by
\[
	\mathcal{F}[f](\xi) = \hat{f}(\xi) = (2\pi)^{-n/2} \int_{\mathbb{R}^n} f(x) e^{-ix\xi} dx,\ 
	\mathcal{F}^{-1}[\phi](x) =
	(2\pi)^{-n/2} \int_{\mathbb{R}^n} \phi(\xi) e^{ix\xi} d\xi,
\]
respectively.

Let
$L^p(\mathbb{R}^n)\ (1\le p \le \infty)$
and
$H^{s,\alpha}(\mathbb{R}^n) \ (s, \alpha \ge 0)$
be the usual Lebesgue and the weighted Sobolev spaces, respectively,
equipped with the norms defined by
\begin{align*}
	&\| f \|_{L^p} = \left( \int_{\mathbb{R}^n} |f(x)|^p dx \right)^{1/p}\ (1\le p <\infty),\quad
	\| f\|_{L^{\infty}} = {\rm ess\,sup\,}|f|.\\
	&\| f \|_{H^{s,\alpha}}
	= \| \langle x \rangle^{\alpha} \langle \nabla \rangle^{s} f \|_{L^2},
\end{align*}
where
$\langle \nabla \rangle^{s} f = \mathcal{F}^{-1}[ \langle \xi \rangle^s \hat{f} ]$.
For an interval $I$ and a Banach space $X$,
we define $C^r(I;X)$ as the space of $r$-times continuously differentiable mapping from
$I$ to $X$ with respect to the topology in $X$.

We denote by $\mathcal{D}(t)$ and $\mathcal{G}(t)$
the solution operator of the linear damped wave and linear heat equations, respectively,
that is,
\begin{align}
\label{d}
	\mathcal{D}(t) &:= e^{-\frac{t}{2}} \mathcal{F}^{-1} L(t,\xi) \mathcal{F},\\
\label{g}
	\mathcal{G}(t) &:= \mathcal{F}^{-1} e^{-t|\xi|^2} \mathcal{F},
\end{align}
where
\begin{align}
\label{l}
	L(t,\xi) := \begin{cases}
	\displaystyle \frac{\sinh (t \sqrt{1/4-|\xi|^2})}{\sqrt{1/4-|\xi|^2}}
		&\mbox{if}\ |\xi| < 1/2,\\
	\displaystyle \frac{\sin (t \sqrt{|\xi|^2-1/4})}{\sqrt{|\xi|^2-1/4}}
		&\mbox{if}\ |\xi| > 1/2.
	\end{cases}
\end{align}
Also, we use
\begin{align}
\label{dt}
	\tilde{\mathcal{D}}(t) := \left( \partial_t + 1 \right) \mathcal{D}(t).
\end{align}

Throughout this paper, we always use
$s, r, \alpha$ as real numbers satisfying
$s\ge 0$, $r \in [1,2]$, $\alpha > n\left( \frac{1}{r} - \frac{1}{2} \right)$,
respectively.
Also, for a real number $s$, we denote by $[s]$ the integer part of $s$.
For $T\in (0,\infty]$, we define
\begin{align}
\nonumber
	\| \phi \|_{X(T)}
	& :=
	\sup_{0<t<T} \Big[
		\langle t \rangle^{\frac{n}{2}\left( \frac{1}{r} -\frac{1}{2} \right)} \| \phi (t) \|_{L^2}
		+ \langle t \rangle^{\frac{n}{2}\left( \frac{1}{r} -\frac{1}{2} \right)+\frac{s}{2}}
			\| |\nabla |^s \phi (t) \|_{L^2} \\
\label{xnorm}
	&\qquad \qquad \quad
		+ \langle t \rangle^{\frac{n}{2}\left( \frac{1}{r} -\frac{1}{2} \right)-\frac{\alpha}{2}}
			\| |\cdot|^{\alpha} \phi (t) \|_{L^2}
		\Big],\\
\nonumber
	\| \psi \|_{Y(T)}
	&:=
	\sup_{0<t<T} \Big[
		\langle t \rangle^{\eta} \| |\nabla|^{[s]} \psi (t) \|_{L^{\rho}}
			%+ \sup_{\gamma\in [\sigma, 2]}
			+ \sup_{\gamma\in [\sigma_1, \sigma_2]}
			\langle t \rangle^{\frac{n}{2}\left( \frac{p}{r}-\frac{1}{\gamma}\right) }
			\| \psi(t) \|_{L^{\gamma}} \\
\label{ynorm}
	&\qquad \qquad \quad
		+ \langle t \rangle^{\zeta} \| \langle \cdot \rangle^{\alpha} \psi(t) \|_{L^q}
		\Big],
\end{align}
where the parameters are defined in Table 2.
Here $(n-2s)_+$ denotes $0 \lor (n-2s)$.
%\newpage
\begin{table}[!h]
\renewcommand{\arraystretch}{2}
\begin{center}
\begin{tabular}{|c|c|c|}
	\hline
	\ &$n\ge 2$&$n=1$\\
	\hline
	$\eta:=$
	&$\displaystyle \frac{\mu}{2}(p-1)+\frac{s}{2}+\frac{np}{2}\left(\frac{1}{r}-\frac{1}{2}\right)$
	&$\displaystyle \frac{1}{2}\left(\frac{p}{r}-\frac{1}{2}\right)$\\[5pt]
	\hline
	$\mu:=$
	&$\displaystyle \frac{n}{2}-\frac{1}{p-1}$
	&-- \\[5pt]
	\hline
	$\zeta:=$
	&$\displaystyle \frac{n}{2r}( p-1) - \frac{1}{2}
		+ \frac{n}{2}\left( \frac{1}{r} - \frac{1}{2} \right)-\frac{\alpha}{2}$
	&$\displaystyle \frac{1}{2r}( p-1) - \frac{1}{4}
		+ \frac{1}{2}\left( \frac{1}{r} - \frac{1}{2} \right)-\frac{\alpha}{2}$\\[5pt]
	\hline
	$q:=$
	&$\displaystyle \frac{2n}{n+2}$
	&$1$\\[5pt]
	\hline
	$\rho :=$
	&$\displaystyle \frac{2n}{n+2-2(s-[s])}$
	&$2$\\[5pt]
	\hline
	$\sigma_1:=$
	&$\displaystyle \max\left\{ 1, \frac{nr}{n+r} \right\}$
	&$1$\\[5pt]
	\hline
	$\sigma_2 :=$
	&$\displaystyle \min \left\{ 2, \frac{2n}{p(n-2s)_+} \right\}$
	&$2$\\[5pt]
	\hline
\end{tabular}
\caption{Definition of parameters}
\end{center}
\end{table}

\vspace{1em}

%We interpret that $\sigma_2$ for $n\ge 2$ is defined as
%$\sigma_2 =2\ (s \ge 1)$ and $\sigma_2 = \frac{2n}{n+2-2s}\ (0\le s <1)$.

%\begin{align*}
%	\eta&:= \frac{s-1}{2} + \frac{n}{2}\left( \frac{p}{r} - \frac{1}{2} \right),\\
%	\zeta&:= 	
%	\frac{n}{2r}( p-1) - \frac{1}{2} + \frac{n}{2}\left( \frac{1}{r} - \frac{1}{2} \right)-\frac{\alpha}{2}
%	\\
%	q&:= \frac{2n}{n+2},\\
%	\rho&:= \frac{2n}{n+2-2(s-[s])},\\
%	\sigma&:= \max\left\{ 1, \frac{nr}{n+r} \right\}.
%\end{align*}

%%%%%%%%%%%%%%%%%%%%%%%%%%%%%%%%%%%%%%%%%%%%%%%%%
%%%%%%%%%%%%%%%%%%%%%%%%%%%%%%%%%%%%%%%%%%%%%%%%%
%%%%%%%%%%%%%%%%%%%%%%%%%%%%%%%%%%%%%%%%%%%%%%%%%
%%%%%%%%%%%%%%%%%%%%%%%%%%%%%%%%%%%%%%%%%%%%%%%%%
\section{Local and global existence of solutions}
\subsection{Preliminary estimates}
%%%%%%%%%%%%%%%%%%%%%%%%%%%%%%%%%%%%%%%%
\begin{lemma}\label{lem_lin}
Let $\gamma, \nu \in [1,2]$, $\beta \ge 0$, $s_1 \ge s_2 \ge 0$.
Then, we have
\begin{align}
\label{est_der}
	\| |\nabla|^{s_1} \mathcal{D}(t) \psi \|_{L^2}
	&\lesssim
	\langle t \rangle^{-\frac{n}{2}\left( \frac{1}{\gamma}-\frac{1}{2}\right)-\frac{s_1-s_2}{2}}
		\| |\nabla|^{s_2} \psi \|_{L^{\gamma}}
		+ e^{-\frac{t}{4}} \| |\nabla|^{s_1} \langle \nabla \rangle^{-1} \psi \|_{L^2},\\
\label{est_wei}
	\| | \cdot |^{\beta} \mathcal{D}(t) \psi \|_{L^2}
	&\lesssim
	\langle t \rangle^{-\frac{n}{2}\left( \frac{1}{\gamma}-\frac{1}{2} \right)+\frac{\beta}{2}}
		\| \psi \|_{L^{\gamma}}
	+\langle t \rangle^{-\frac{n}{2}\left( \frac{1}{\nu}-\frac{1}{2}\right)}
		\| |\cdot|^{\beta} \psi \|_{L^{\nu}}
		+ e^{-\frac{t}{4}} \| \langle \cdot \rangle^{\beta} \langle \nabla \rangle^{-1} \psi \|_{L^2}.
\end{align}
We also have the continuity of $\mathcal{D}(t)$ with respect to
$H^{s_1,0} \cap H^{0,\beta}$-norm:
\begin{align}
\label{con1}
	\lim_{t_1\to t_2} \| \mathcal{D}(t_1) \psi - \mathcal{D}(t_2)\psi \|_{H^{s_1,0}\cap H^{0,\beta}}
	=0.
\end{align}
\end{lemma}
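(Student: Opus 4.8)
The whole lemma rests on a low/high frequency decomposition of the Fourier multiplier $e^{-t/2}L(t,\xi)$ of $\mathcal{D}(t)$. First I would observe that, although $L(t,\cdot)$ is defined piecewise in \eqref{l}, it is in fact smooth in $\xi$: writing $\omega=\sqrt{1/4-|\xi|^2}$ one has $L=t\,\sinh(t\omega)/(t\omega)$, and since $z\mapsto \sinh(\sqrt z)/\sqrt z$ is entire, $L(t,\xi)$ is an entire function of $1/4-|\xi|^2$, so the apparent singularity at $|\xi|=1/2$ is removable and one may use a smooth partition $1=\chi_{\mathrm{low}}+\chi_{\mathrm{high}}$ cutting at $|\xi|=\sqrt3/4$ (equivalently $\omega=1/4$). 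The elementary bounds driving everything are: on $\{|\xi|\le \sqrt3/4\}$, using $\tfrac12-\omega=|\xi|^2/(\tfrac12+\omega)\ge|\xi|^2$ and $1/(2\omega)\le2$, the heat-type bound $0\le e^{-t/2}L\lesssim e^{-t|\xi|^2}$; on $\{\sqrt3/4\le|\xi|\le 1/2\}$, monotonicity of $u\mapsto\sinh u/u$ gives $e^{-t/2}L=e^{-t/2}\,t\,\tfrac{\sinh(t\omega)}{t\omega}\le 2e^{-t/4}$; and on $\{|\xi|\ge1/2\}$, from $|L|\le\min\{t,(|\xi|^2-1/4)^{-1/2}\}$, the inequality $te^{-t/2}\lesssim e^{-t/4}$ near $|\xi|=1/2$ and $(|\xi|^2-1/4)^{-1/2}\lesssim\langle\xi\rangle^{-1}$ for $|\xi|\gtrsim1$, one obtains $|e^{-t/2}L|\lesssim e^{-t/4}\langle\xi\rangle^{-1}$. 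Thus the high-frequency multiplier decays exponentially and carries the announced gain of one derivative, while the low-frequency multiplier is dominated by the heat kernel.

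For \eqref{est_der} I would treat the two pieces separately. For the high-frequency part $\mathcal{F}^{-1}[\chi_{\mathrm{high}}e^{-t/2}L\hat\psi]$, Plancherel and the bound $|e^{-t/2}L|\lesssim e^{-t/4}\langle\xi\rangle^{-1}$ on $\operatorname{supp}\chi_{\mathrm{high}}$ give
\[
\| |\nabla|^{s_1}\chi_{\mathrm{high}}(\nabla)\mathcal D(t)\psi\|_{L^2}
\lesssim e^{-t/4}\big\| |\xi|^{s_1}\langle\xi\rangle^{-1}\hat\psi\big\|_{L^2}
= e^{-t/4}\||\nabla|^{s_1}\langle\nabla\rangle^{-1}\psi\|_{L^2},
\]
which is exactly the second term. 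For the low-frequency part, since $s_1\ge s_2$ I would factor $|\xi|^{s_1}=|\xi|^{s_1-s_2}|\xi|^{s_2}$, use Hölder with exponents $1/2=1/\gamma'+1/\kappa$ (so $1/\kappa=1/\gamma-1/2$) and then the Hausdorff--Young inequality (legitimate since $\gamma\in[1,2]$), together with $|e^{-t/2}L|\lesssim e^{-t|\xi|^2}$ on the support:
\[
\| |\nabla|^{s_1}\chi_{\mathrm{low}}(\nabla)\mathcal D(t)\psi\|_{L^2}
\lesssim \big\| |\xi|^{s_1-s_2}e^{-t|\xi|^2}\big\|_{L^\kappa}\,\big\||\xi|^{s_2}\hat\psi\big\|_{L^{\gamma'}}
\lesssim \langle t\rangle^{-\frac n2(\frac1\gamma-\frac12)-\frac{s_1-s_2}{2}}\||\nabla|^{s_2}\psi\|_{L^\gamma},
\]
where the Gaussian $L^\kappa$-norm is computed by scaling $\xi\mapsto\sqrt t\,\xi$, giving $\sim t^{-\frac n{2\kappa}-\frac{s_1-s_2}2}$ for $t\ge1$ and $O(1)$ for $t\le1$ by the bounded support. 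This is the first term.

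The weighted estimate \eqref{est_wei} is the main difficulty, because $|x|^\beta$ does not commute with $\mathcal D(t)$. For the low-frequency part I would work in physical space with the heat-like kernel $K_{\mathrm{low}}(t,\cdot)=\mathcal F^{-1}[\chi_{\mathrm{low}}e^{-t/2}L]$, which by the Gaussian domination above and control of its $\xi$-derivatives obeys heat-type bounds $\| |\cdot|^\beta K_{\mathrm{low}}(t)\|_{L^p}\lesssim\langle t\rangle^{-\frac n2(1-\frac1p)+\frac\beta2}$. Splitting $|x|^\beta\lesssim|x-y|^\beta+|y|^\beta$ inside the convolution $\mathcal D_{\mathrm{low}}(t)\psi=K_{\mathrm{low}}(t)*\psi$ and applying Young's inequality to each piece (with $1/p=3/2-1/\gamma$ for the first and $1/\tilde p=3/2-1/\nu$ for the second) produces precisely the first two terms of \eqref{est_wei}: the weight falling on the kernel yields the factor $\langle t\rangle^{+\beta/2}$, while the weight falling on $\psi$ yields $\||\cdot|^\beta\psi\|_{L^\nu}$. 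For the high-frequency part, which has no Gaussian kernel, I would instead transfer the weight to frequency: representing $|x|^\beta$ (for integer $\beta$ directly, and for general $\beta$ via $\langle x\rangle^\beta$ and interpolation) through $\xi$-derivatives and using the Leibniz rule, each derivative hitting the symbol $\chi_{\mathrm{high}}e^{-t/2}L$ produces at worst powers of $t$ absorbed by $e^{-t/2}$ and preserves the order $\langle\xi\rangle^{-1}$, while derivatives hitting $\hat\psi$ encode the weight on $\psi$; this yields the exponentially small term $e^{-t/4}\|\langle\cdot\rangle^\beta\langle\nabla\rangle^{-1}\psi\|_{L^2}$.

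Finally, for the continuity \eqref{con1} I would split the norm. The $H^{s_1,0}$ part equals, by Plancherel, $\|\langle\xi\rangle^{s_1}\big(e^{-t_1/2}L(t_1,\xi)-e^{-t_2/2}L(t_2,\xi)\big)\hat\psi\|_{L^2}$; the integrand tends to $0$ pointwise and is dominated, uniformly for $t$ in a compact interval, by $C\langle\xi\rangle^{s_1}|\hat\psi|\in L^2$, so dominated convergence applies. For the weighted $H^{0,\beta}$ part I would combine the local-in-$t$ uniform boundedness of $\mathcal D(t)$ on $H^{s_1,0}\cap H^{0,\beta}$ (the $\gamma=\nu=2$ cases of \eqref{est_der} and \eqref{est_wei}) with strong continuity on the dense subspace of Schwartz functions, on which $t\mapsto\mathcal D(t)\psi$ is manifestly continuous in every weighted Sobolev norm, via a standard $\varepsilon/3$ argument. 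The main obstacle throughout is \eqref{est_wei}: reconciling the spatial weight with the convolution and multiplier structure, in particular handling non-integer $\beta$ and, for the high-frequency part, showing that transferring $|x|^\beta$ onto the non-smooth oscillatory symbol retains both the exponential decay and the single derivative gain.
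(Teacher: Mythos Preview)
Your proposal is correct and follows essentially the same architecture as the paper's proof: a low/high frequency split of the multiplier $e^{-t/2}L(t,\xi)$, Plancherel plus H\"older and Hausdorff--Young for \eqref{est_der}, and for \eqref{est_wei} the physical-space convolution argument (splitting $|x|^\beta\lesssim|x-y|^\beta+|y|^\beta$ and proving the kernel bound $\||\cdot|^\beta K_{\mathrm{low}}(t)\|_{L^k}\lesssim\langle t\rangle^{\beta/2-\frac n2(1-1/k)}$ by scaling and integration by parts) on the low-frequency side, while transferring the weight to $\xi$-derivatives on the high-frequency side. The one substantive difference is the treatment of non-integer $\beta$ in the high-frequency weighted estimate: you propose handling it by interpolation between integer cases, whereas the paper instead uses an explicit fractional-derivative representation (its Lemma~\ref{lem_fd}) to obtain a Leibniz-type rule for $|\partial_j|^{\omega}(\phi\psi)$ and then bounds the resulting commutator integral directly; both routes are valid, yours being slightly more conceptual and the paper's more hands-on.
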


When $\gamma=1$, the estimates \eqref{est_der} and \eqref{est_wei} were proved
by \cite{HaKaNa04}.
Here we give a generalization of it to $\gamma \in [1,2]$
with a slightly simpler proof.
%%%%%%%%%%%%%%%%%%%%%%%%%%%%%%%%%%%%%%%%
\begin{proof}
By the definition of $L(t,\xi)$, it is easy to see that
\[
	e^{-\frac{t}{2}} \| |\xi|^{s} L(t,\xi) \|_{L^{\gamma}(|\xi| \le 1)}
	\lesssim \langle t \rangle^{-\frac{s}{2}-\frac{n}{2\gamma}}
\]
and
\[
	\| \langle \xi \rangle L(t,\xi) \|_{L^{\infty}(|\xi|\ge 1)} \lesssim 1.
\]
Therefore, applying the Plancherel theorem and
the H\"{o}lder inequality, we have
\begin{align*}
	\| |\nabla|^{s_1} \mathcal{D}(t) \psi \|_{L^2}
	&= \| |\xi|^{s_1} e^{-\frac{t}{2}} L(t,\xi) \hat{\psi} \|_{L^2} \\
	&\lesssim e^{-\frac{t}{2}}
		\| |\xi|^{s_1-s_2} L(t,\xi) \|_{L^{\frac{2\gamma}{2-\gamma}}(|\xi|\le 1)}
		\| |\xi|^{s_2} \hat{\psi} \|_{L^{\frac{\gamma}{\gamma-1}}(|\xi| \le 1)} \\
	&\quad + e^{-\frac{t}{2}}
		\| \langle \xi \rangle L(t,\xi) \|_{L^{\infty}(|\xi|\ge 1)}
		\| |\xi|^{s_1} \langle \xi \rangle^{-1} \hat{\psi} \|_{L^2(|\xi|\ge 1)} \\
	&\lesssim
		\langle t \rangle^{-\frac{s_1-s_2}{2} - \frac{n}{2} \left( \frac{1}{\gamma}-\frac{1}{2} \right)}
		\| |\nabla|^{s_2} \psi \|_{L^{\gamma}}
		+ e^{-\frac{t}{4}}
		\| |\nabla|^{s_1} \langle \nabla \rangle^{-1} \psi \|_{L^2},
\end{align*}
which gives \eqref{est_der}.

Next, we prove \eqref{est_wei}.
Let
$\chi(\xi) \in C_0^{\infty}(\mathbb{R}^n)$
be a cut-off function satisfying
$\chi(\xi) = 1$ for $|\xi|\le 1$
and $\chi(\xi) = 0$ for $|\xi|\ge 2$.
We put
\[
	K(t,x) := e^{-\frac{t}{2}} \mathcal{F}^{-1} \left( L(t,\xi) \chi(\xi) \right).
\]
The Hausdorff-Young inequality implies
\begin{align}
\nonumber
	&\| |\cdot|^{\beta} \mathcal{F}^{-1}
		\left( e^{-\frac{t}{2}} L(t,\xi) \chi(\xi) \hat{\psi} \right) \|_{L^2} \\
\nonumber
	&\quad = \left\| |x|^{\beta} \int_{\mathbb{R}^n} K(t,x-y) \psi(y) dy \right\|_{L^2} \\
\nonumber
	&\quad \lesssim
		\left\|
		\int_{\mathbb{R}^n} \left( |x-y|^{\beta} | K(t,x-y) | + |K(t,x-y)| |y|^{\beta} \right)
		\psi (y) dy \right\|_{L^2} \\
\label{ine_K}
	&\quad \lesssim
		\| |\cdot|^{\beta} K(t) \|_{L^{\frac{2\gamma}{3\gamma-2}}}
		\| \psi \|_{L^{\gamma}}
		+ \| K(t) \|_{L^{\frac{2\nu}{3\nu-2}}} \| |\cdot|^{\beta} \psi \|_{L^{\nu}}.
\end{align}
Now we prove
\begin{align}
\label{est_K}
	\| |\cdot|^{\beta} K(t) \|_{L^k}
	\lesssim \langle t \rangle^{\frac{\beta}{2}-\frac{n}{2}\left( 1-\frac{1}{k} \right)}
\end{align}
for $t>0$ and $k \in [1,\infty)$.
First, we divide the proof into the cases $0<t<1$ and $t\ge 1$.
For the case $0<t<1$, we easily prove
$\| |\cdot|^{\beta} K(t) \|_{L^k} \lesssim 1$.
Indeed, noting
$|x|^{\beta k} \le \langle x \rangle^{2Nk-(n+1)}$
for sufficiently large integer $N\in \mathbb{N}$,
we have
\begin{align*}
	\| |\cdot|^{\beta} K(t) \|_{L^k}^k
	&\lesssim
		\int_{\mathbb{R}^n} \langle x \rangle^{2Nk-(n+1)} | K(t,x) |^k dx \\
	&= \int_{\mathbb{R}^n} \langle x \rangle^{-(n+1)}
		\left| \int_{|\xi|\le 1/4}
			e^{ix\xi} e^{-\frac{t}{2}} \langle \nabla \rangle^{2N} ( L(t,\xi) \chi(\xi) ) d\xi
		\right|^k dx \\
	&\lesssim 1.
\end{align*}
For the case $t \ge 1$, we change the variables as
$\sqrt{t} \xi = \eta$ and $x = \sqrt{t} y$ to obtain
\begin{align*}
	\| |\cdot|^{\beta} K(t) \|_{L^k}^k
	&= \int_{\mathbb{R}^n} |x|^{\beta k}
		\left|
			\int_{\mathbb{R}^n} e^{ix\xi} e^{-\frac{t}{2}} L(t,\xi) \chi(\xi) d\xi
		\right|^k dx \\
	&= t^{-\frac{n}{2}k} \int_{\mathbb{R}^n} |x|^{\beta k}
		\left|
			\int_{\mathbb{R}^n} e^{ix\eta/\sqrt{t}}
			e^{-\frac{t}{2}} L(t,\eta/\sqrt{t}) \chi(\eta/\sqrt{t}) d\eta
		\right|^k dx \\
	&= t^{\frac{\beta}{2}k-\frac{n}{2}(k-1)} \int_{\mathbb{R}^n} |y|^{\beta k}
		\left|
			\int_{\mathbb{R}^n} e^{iy\eta}
			e^{-\frac{t}{2}} L(t,\eta/\sqrt{t}) \chi(\eta/\sqrt{t}) d\eta
		\right|^k dy.
\end{align*}
As before, using
$|y|^{\beta k} \le \langle y \rangle^{2Nk-(n+1)}$
with sufficiently large integer
$N \in \mathbb{N}$, we have
\begin{align*}
	&\| |\cdot|^{\beta} K(t) \|_{L^k}^k \\
	&\quad \lesssim
		t^{\frac{\beta}{2}k-\frac{n}{2}(k-1)}
			\int_{\mathbb{R}^n} \langle y \rangle^{-(n+1)}
		\left|
			\int_{\mathbb{R}^n} e^{iy\eta}
			\langle \nabla_{\eta} \rangle^{2N} \left(
			e^{-\frac{t}{2}} L(t,\eta/\sqrt{t}) \chi(\eta/\sqrt{t})
			\right) d\eta
		\right|^k dy. \\
\end{align*}
By the definition of $L(t,\xi)$ (see \eqref{l}),
we can easily see that for $t\ge 1$
\[
	\left| \langle \nabla_{\eta} \rangle^{2N} \left(
			e^{-\frac{t}{2}} L(t,\eta/\sqrt{t}) \chi(\eta/\sqrt{t})
			\right) \right|
	\lesssim e^{-\frac{|\eta|^2}{2}}.
\]
Thus, we obtain the desired estimate \eqref{est_K}.

Applying \eqref{est_K} to \eqref{ine_K}, we have
\[
	\left\| |\cdot|^{\beta} \mathcal{F}^{-1}
		\left( e^{-\frac{t}{2}} L(t,\xi) \chi(\xi) \hat{\psi} \right) \right\|_{L^2}
	\lesssim
	\langle t \rangle^{\frac{\beta}{2}-\frac{n}{2}\left( \frac{1}{\gamma}-\frac{1}{2} \right)}
		\| \psi \|_{L^{\gamma}}
	+\langle t \rangle^{-\frac{n}{2}\left( \frac{1}{\nu}-\frac{1}{2}\right)}
		\| |\cdot|^{\beta} \psi \|_{L^{\nu}}.
\]
Hence, it suffices to show that
\begin{align}
\label{est_lc2}
	\left\| |\cdot|^{\beta} \mathcal{F}^{-1}
		\left( e^{-\frac{t}{2}} L(t,\xi) (1-\chi(\xi)) \hat{\psi} \right) \right\|_{L^2}
	\lesssim
	e^{-\frac{t}{4}}
		\| \langle \cdot \rangle^{\beta} \langle \nabla \rangle^{-1} \psi \|_{L^2}.
\end{align}
To prove this, we calculate the fractional derivative.
Let
$\omega := \beta - [\beta]$.
Making use of Lemma \ref{lem_fd}, we see that
\begin{align*}
	| \partial_j|^{\omega} ( \phi \psi )(x)
	&= C \int_{\mathbb{R}} ( \phi(x) \psi(x) - \phi(x+y)\psi(x+y) ) \frac{dy_j}{|y_j|^{1+\omega}} \\
	&= \phi(x) |\partial_j|^{\omega} \psi(x)
		+ C\int_{\mathbb{R}} ( \phi(x) - \phi(x+y)) \psi(x+y) \frac{dy_j}{|y_j|^{1+\omega}}
\end{align*}
holds, where
$y := (0, \ldots, y_j, \ldots, 0)$.
Then, using the Plancherel theorem and the Leibniz rule, we have
\begin{align*}
	\| |\nabla|^{\beta} ( \phi \psi ) \|_{L^2}
	&\lesssim \sum_{j=1}^n \| |\partial_j|^{\omega} |\partial_j|^{[\beta]} ( \phi \psi )\|_{L^2}
	\lesssim \sum_{j=1}^n \sum_{k=0}^{[\beta]}
		\left\| |\partial_j|^{\omega} \left( (\partial_j^{[\beta]-k} \phi ) \partial_j^k \psi \right) \right\|_{L^2},
\end{align*}
and hence,
\begin{align*}
	\| |\nabla|^{\beta} (\phi \psi ) \|_{L^2}
	&\lesssim
	\sum_{j=1}^n \sum_{k=0}^{[\beta]}
		\left\| \left( \partial_j^{m-k} \phi \right) |\partial_j|^{k+\omega} \psi \right\|_{L^2} \\
	&\quad + \sum_{j=1}^n \sum_{k=0}^{[\beta]}
		\left\| \int_{\mathbb{R}} \left(
			\partial_j^{[\beta]-k}\phi (\cdot) - \partial_j^{[\beta]-k} \phi(\cdot + y) \right)
			\partial_j^k \psi (\cdot + y) \frac{dy_j}{|y_j|^{1+\omega}}
		\right\|_{L^2} \\
%	&\lesssim
%	\sum_{j=1}^n \sum_{k=1}^{[\beta]}
%		\left( \| \partial_j^{[\beta]-k}\phi \|_{L^{\infty}} \| \partial_j^{k+\omega} \psi \|_{L^2}
%			+
%			\left( \| \partial_j^{[\beta]-k+1} \phi \|_{L^{\infty}}
%				\| \partial_j^{[\beta]-k} \phi \|_{L^{\infty}} \right)
%			\| \partial_j^k \psi \|_{L^2} \right) \\
	&\lesssim 
		\left(
		\sum_{j=1}^n \sum_{k=0}^{[\beta]+1}
		\| \partial_j^k \phi \|_{L^{\infty}} \right)
		\| \langle \nabla \rangle^{\beta} \psi \|_{L^2}.
\end{align*}
We apply the above estimate to the left-hand side of \eqref{est_lc2} with
$\psi = \langle \xi \rangle^{-1} \hat{\psi}$
and
$\phi = \langle \xi \rangle e^{-\frac{t}{2}} L(t,\xi) (1-\chi(\xi))$.
Since
\[
	\| \partial_j^{k} e^{-\frac{t}{2}} \langle \xi \rangle L(t,\xi) (1-\chi(\xi)) \|_{L^{\infty}}
	\lesssim e^{-\frac{t}{4}}
\]
for any
$k \in \mathbb{N}$,
we obtain
\begin{align*}
	\left\| |\cdot|^{\beta} \mathcal{F}^{-1}
		\left( e^{-\frac{t}{2}} L(t,\xi) (1-\chi(\xi)) \hat{\psi} \right) \right\|_{L^2}
	\lesssim e^{-\frac{t}{4}}
		\| \langle \nabla \rangle^{\beta} \langle \xi \rangle^{-1} \hat{\psi} \|_{L^2}.
\end{align*}

Finally, the bounds of $L(t,\xi)$ proved above and
the continuity of $L(t,\xi)$ with respect to $t$ show \eqref{con1}.
This completes the proof.
\end{proof}

Similarly, we can prove the following lemma.
%%%%%%%%%%%%%%%%%%%%%%%%%%%%%%%%%%%%%%%%
\begin{lemma}\label{lem_lin2}
Let $\gamma, \nu \in [1,2]$, $\beta \ge 0$, $s_1 \ge s_2 \ge 0$.
Then, we have
\begin{align}
\label{est_der2}
	\| |\nabla|^{s_1} \tilde{\mathcal{D}}(t) \psi \|_{L^2}
	&\le
	\langle t \rangle^{-\frac{n}{2}\left( \frac{1}{\gamma}-\frac{1}{2}\right)-\frac{s_1-s_2}{2}}
		\| |\nabla|^{s_2} \psi \|_{L^{\gamma}}
		+ e^{-\frac{t}{4}} \| |\nabla|^{s_1} \psi \|_{L^2},\\
\label{est_wei2}
	\| | \cdot |^{\beta} \tilde{\mathcal{D}}(t) \psi \|_{L^2}
	&\le
	\langle t \rangle^{-\frac{n}{2}\left( \frac{1}{\gamma}-\frac{1}{2} \right)+\frac{\beta}{2}}
		\| \psi \|_{L^{\gamma}}
	+\langle t \rangle^{-\frac{n}{2}\left( \frac{1}{\nu}-\frac{1}{2}\right)}
		\| |\cdot|^{\beta} \psi \|_{L^{\nu}}
	+ e^{-\frac{t}{4}} \| \langle \cdot \rangle^{\beta} \psi \|_{L^2}.
\end{align}
We also have the continuity of $\tilde{\mathcal{D}}(t)$ with respect to
$H^{s_1,0} \cap H^{0,\beta}$-norm:
\begin{align}
\label{con2}
	\lim_{t_1\to t_2}
	\| \tilde{\mathcal{D}}(t_1) \psi - \tilde{\mathcal{D}}(t_2)\psi \|_{H^{s_1,0}\cap H^{0,\beta}}
	=0.
\end{align}
\end{lemma}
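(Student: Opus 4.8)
The plan is to follow the proof of Lemma \ref{lem_lin} essentially verbatim, replacing the symbol of $\mathcal{D}(t)$ by that of $\tilde{\mathcal{D}}(t)$. First I would compute the latter: since $\tilde{\mathcal{D}}(t) = (\partial_t+1)\mathcal{D}(t) = (\partial_t+1)\bigl(e^{-\frac t2}\mathcal{F}^{-1}L(t,\xi)\mathcal{F}\bigr)$, a direct differentiation gives $\tilde{\mathcal{D}}(t) = e^{-\frac t2}\mathcal{F}^{-1}\tilde L(t,\xi)\mathcal{F}$ with $\tilde L(t,\xi) := \tfrac12 L(t,\xi) + \partial_t L(t,\xi)$, where from \eqref{l} one has $\partial_t L(t,\xi) = \cosh\!\bigl(t\sqrt{1/4-|\xi|^2}\bigr)$ for $|\xi|<1/2$ and $\partial_t L(t,\xi) = \cos\!\bigl(t\sqrt{|\xi|^2-1/4}\bigr)$ for $|\xi|>1/2$. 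Everything then reduces to re-establishing, for $\tilde L$, the two symbol bounds that drove Lemma \ref{lem_lin}.

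For the low-frequency part $|\xi|\le 1$, I would write $e^{-\frac t2}\sinh(tA)=\tfrac12\bigl(e^{-t(1/2-A)}-e^{-t(1/2+A)}\bigr)$ and $e^{-\frac t2}\cosh(tA)=\tfrac12\bigl(e^{-t(1/2-A)}+e^{-t(1/2+A)}\bigr)$ with $A=\sqrt{1/4-|\xi|^2}$, and use $1/2-A = |\xi|^2/(1/2+A)$, hence $|\xi|^2 \le 1/2-A \le 2|\xi|^2$. This shows $e^{-\frac t2}\tilde L(t,\xi)$ enjoys exactly the same Gaussian-type decay as $e^{-\frac t2}L(t,\xi)$, while the slice $1/2<|\xi|\le 1$ carries the prefactor $e^{-\frac t2}$ and is harmless. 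Thus $e^{-\frac t2}\bigl\||\xi|^{s}\tilde L(t,\xi)\bigr\|_{L^\gamma(|\xi|\le 1)}\lesssim \langle t\rangle^{-s/2-n/(2\gamma)}$, and the kernel bound \eqref{est_K} persists with $K$ replaced by $\tilde K(t,x):=e^{-\frac t2}\mathcal{F}^{-1}(\tilde L(t,\xi)\chi(\xi))$, via the same rescaling $\sqrt t\,\xi=\eta$. These furnish the first term of \eqref{est_der2} and the first two terms of \eqref{est_wei2}.

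The only genuine change lies in the high-frequency part $|\xi|\ge 1$. There $\partial_t L(t,\xi)=\cos\!\bigl(t\sqrt{|\xi|^2-1/4}\bigr)$ is bounded but does not decay in $|\xi|$, so, in contrast with $L$, one cannot extract a factor $\langle\xi\rangle^{-1}$; instead I would use only $\|\tilde L(t,\xi)\|_{L^\infty(|\xi|\ge 1)}\lesssim 1$ (valid since $|\xi|\ge 1$ keeps $\sqrt{|\xi|^2-1/4}\ge\sqrt3/2$ bounded away from $0$). Feeding this into the Plancherel step in place of $\|\langle\xi\rangle L\|_{L^\infty(|\xi|\ge1)}\lesssim 1$ produces the error terms $e^{-\frac t4}\||\nabla|^{s_1}\psi\|_{L^2}$ and $e^{-\frac t4}\|\langle\cdot\rangle^{\beta}\psi\|_{L^2}$, now \emph{without} the smoothing $\langle\nabla\rangle^{-1}$ present in \eqref{est_der} and \eqref{est_wei}. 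For the weighted case the fractional-Leibniz computation leading to \eqref{est_lc2} applies unchanged with $\phi=e^{-\frac t2}\tilde L(t,\xi)(1-\chi(\xi))$, which still satisfies $\|\partial_j^k\phi\|_{L^\infty}\lesssim e^{-\frac t4}$. This yields \eqref{est_der2} and \eqref{est_wei2}, and \eqref{con2} then follows from these bounds together with the continuity of $\tilde L(t,\xi)$ in $t$, exactly as \eqref{con1} did.

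I expect the main obstacle — and the sole place where the argument departs from a mechanical copy — to be the high-frequency analysis: recognizing that differentiating $L$ in $t$ turns the decaying factor $1/\sqrt{|\xi|^2-1/4}$ into the nondecaying $\cos$, which is precisely why $\tilde{\mathcal{D}}(t)$ cannot gain a derivative and why the error terms of \eqref{est_der2}–\eqref{est_wei2} must be stated without $\langle\nabla\rangle^{-1}$.
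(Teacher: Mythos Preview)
Your proposal is correct and matches the paper's approach: the paper omits the proof entirely with the remark ``Similarly, we can prove the following lemma,'' and your plan fills in precisely the expected details---repeating the argument of Lemma~\ref{lem_lin} with $\tilde L=\tfrac12 L+\partial_t L$ in place of $L$. You have also identified the one genuine point of difference, namely that $\partial_t L(t,\xi)=\cos\bigl(t\sqrt{|\xi|^2-1/4}\bigr)$ fails to decay in $|\xi|$ on the high-frequency region, which is exactly why the exponential error terms in \eqref{est_der2}--\eqref{est_wei2} lose the $\langle\nabla\rangle^{-1}$ smoothing present in \eqref{est_der}--\eqref{est_wei}.
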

%%%%%%%%%%%%%%%%%%%%%%%%%%%%%%%%%%%%%%%%

Let $\| \cdot \|_{X(T)}$ be defined by \eqref{xnorm}.
In order to prove Theorems \ref{thm_lwp} and \ref{thm_gwp},
we frequently use the following interpolation inequalities.

%%%%%%%%%%%%%%%%%%%%%%%%%%%%%%%%%%%%%%%%
\begin{lemma}\label{lem_ip}
Let
$s\ge 0$, $r\in [1,2]$, $\alpha > n ( \frac{1}{r}-\frac12 )$
and let
$\| \cdot \|_{X(T)}$
be defined by \eqref{xnorm}.
Then, the following interpolation inequalities hold:

\noindent
(i) We have
\begin{align*}
	&\sup_{0<t<T}\left(
	\sup_{0\le s^{\prime} \le s}
		\langle t \rangle^{\frac{n}{2}\left(\frac{1}{r}-\frac{1}{2}\right)+\frac{s^{\prime}}{2}}
		\| |\nabla|^{s^{\prime}} \phi(t) \|_{L^2}
		+
	\sup_{0\le \beta\le \alpha}
		\langle t \rangle^{\frac{n}{2}\left(\frac{1}{r}-\frac{1}{2}\right)-\frac{\beta}{2}}
		\| | \cdot |^{\beta} \phi(t) \|_{L^2}
	\right)
	\lesssim
	\| \phi \|_{X(T)}.
\end{align*}

\noindent
(ii)
We have
\begin{align}
\label{eq_ip}
	\sup_{0<t<T} %\sup_{r\le \gamma \le \frac{2n}{n-2s}}
	\langle t \rangle^{\frac{n}{2}\left( \frac{1}{r}-\frac{1}{\gamma} \right)}
	\| \phi(t) \|_{L^{\gamma}}
	\lesssim
	\| \phi \|_{X(T)}
	\quad
	\mbox{for}\quad
	\begin{cases}
		r\le \gamma \le \infty&\mbox{if}\quad 1\le n < 2s,\\
		r\le \gamma < \infty&\mbox{if}\quad n =2s,\\
		r\le \gamma \le \frac{2n}{n-2s}&\mbox{if}\quad 2s<n.
	\end{cases}
\end{align}
\end{lemma}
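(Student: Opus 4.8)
The plan is to reduce each inequality to a scale-invariant interpolation estimate in the spatial variable and then reinsert the time weights, which combine automatically because the three terms in $\|\cdot\|_{X(T)}$ carry exactly the powers of $\langle t\rangle$ dictated by the corresponding spatial scaling. Throughout, the interpolation constants are uniform in the auxiliary parameters $s'$, $\beta$, $\gamma$, so taking the suprema causes no trouble.

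For part (i) I would first treat the derivative term. Setting $\theta = s'/s$ and applying H\"older's inequality on the Fourier side to $\int|\xi|^{2s'}|\hat\phi|^2\,d\xi = \int (|\hat\phi|^2)^{1-\theta}(|\xi|^{2s}|\hat\phi|^2)^{\theta}\,d\xi$ yields, via Plancherel, the homogeneous interpolation $\||\nabla|^{s'}\phi\|_{L^2}\le\|\phi\|_{L^2}^{1-\theta}\||\nabla|^{s}\phi\|_{L^2}^{\theta}$. The weight term is entirely parallel: with $\theta=\beta/\alpha$, H\"older in physical space applied to $\int|x|^{2\beta}|\phi|^2\,dx = \int(|\phi|^2)^{1-\theta}(|x|^{2\alpha}|\phi|^2)^{\theta}\,dx$ gives $\||\cdot|^{\beta}\phi\|_{L^2}\le\|\phi\|_{L^2}^{1-\theta}\||\cdot|^{\alpha}\phi\|_{L^2}^{\theta}$. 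In both cases the matching time weight factors as a convex combination, for instance $\langle t\rangle^{\frac n2(\frac1r-\frac12)+\frac{s'}2} = \bigl(\langle t\rangle^{\frac n2(\frac1r-\frac12)}\bigr)^{1-\theta}\bigl(\langle t\rangle^{\frac n2(\frac1r-\frac12)+\frac s2}\bigr)^{\theta}$, so multiplying the interpolation inequalities by the time weight and bounding each factor by $\|\phi\|_{X(T)}$ closes (i).

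For part (ii) I would split the range of $\gamma$ at $\gamma=2$. When $2\le\gamma\le\frac{2n}{n-2s}$ (with the obvious modification when $2s\ge n$), the Gagliardo--Nirenberg inequality $\|\phi\|_{L^{\gamma}}\lesssim\|\phi\|_{L^2}^{1-\sigma}\||\nabla|^{s}\phi\|_{L^2}^{\sigma}$ with $\sigma = \frac ns(\frac12-\frac1\gamma)\in[0,1]$ is available; inserting the $L^2$ and $|\nabla|^s$ bounds from $\|\phi\|_{X(T)}$ produces the time weight $\langle t\rangle^{-\frac n2(\frac1r-\frac12)-\frac{s\sigma}2}$, and since $\frac{s\sigma}2 = \frac n2(\frac12-\frac1\gamma)$ the exponent collapses exactly to $-\frac n2(\frac1r-\frac1\gamma)$. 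When $r\le\gamma<2$ (which can occur only for $r<2$), $\phi$ need not lie in $L^{\gamma}$ without using decay at infinity, so I would instead establish the weighted inequality $\|\phi\|_{L^{\gamma}}\lesssim\|\phi\|_{L^2}^{1-\theta}\||\cdot|^{\alpha}\phi\|_{L^2}^{\theta}$ with $\theta = \frac n\alpha(\frac1\gamma-\frac12)\in[0,1)$, and then combine it with the $X(T)$ bounds exactly as above to recover the exponent $-\frac n2(\frac1r-\frac1\gamma)$.

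The main obstacle is precisely this weighted Gagliardo--Nirenberg inequality for $\gamma<2$, since the naive H\"older split fails: the weight $|x|^{-2\alpha\gamma/(2-\gamma)}$ is not integrable on all of $\mathbb{R}^n$. My plan is a cut-off at radius $R$. On $\{|x|\le R\}$, H\"older with exponents $(\frac2\gamma,\frac2{2-\gamma})$ gives $\int_{|x|\le R}|\phi|^{\gamma}\lesssim\|\phi\|_{L^2}^{\gamma}R^{n(1-\gamma/2)}$, while on $\{|x|>R\}$ the same H\"older against the weight gives $\int_{|x|>R}|\phi|^{\gamma}\lesssim\||\cdot|^{\alpha}\phi\|_{L^2}^{\gamma}R^{\frac{n(2-\gamma)}2-\alpha\gamma}$; here the tail integral $\int_{|x|>R}|x|^{-2\alpha\gamma/(2-\gamma)}\,dx$ converges precisely because $\alpha>n(\frac1r-\frac12)\ge n(\frac1\gamma-\frac12)$, which forces its exponent above $n$. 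Optimizing the resulting bound $\|\phi\|_{L^{\gamma}}^{\gamma}\lesssim\|\phi\|_{L^2}^{\gamma}R^{a}+\||\cdot|^{\alpha}\phi\|_{L^2}^{\gamma}R^{b}$ over $R>0$, where $a=\frac{n(2-\gamma)}2>0>b$ and $a-b=\alpha\gamma$, yields exactly $\theta = \frac{a}{a-b}=\frac{n(2-\gamma)}{2\alpha\gamma}=\frac n\alpha(\frac1\gamma-\frac12)$, the value predicted by scaling, completing the proof.
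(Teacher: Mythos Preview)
Your proposal is correct and follows essentially the same route as the paper. Part (i) is identical; for part (ii) the paper also splits at $\gamma=2$, using Sobolev embedding plus part (i) for $\gamma\ge 2$ (equivalent to your Gagliardo--Nirenberg) and proving the same weighted interpolation inequality for $\gamma<2$, only with a soft cutoff $(\theta^2+|x|^2)^{-\beta\gamma/2}$ instead of your hard cutoff at radius $R$---the optimization step is the same either way.
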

%%%%%%%%%%%%%%%%%%%%%%%%%%%%%%%%%%%%%%%%
\begin{proof}
(i)
The Plancherel theorem and the H\"{o}lder inequality imply
\begin{align*}
	\langle t \rangle^{\frac{n}{2}\left(\frac{1}{r}-\frac{1}{2}\right)+\frac{s^{\prime}}{2}}
		\| |\nabla|^{s^{\prime}} \phi(t) \|_{L^2}
	&= \langle t \rangle^{\frac{n}{2}\left(\frac{1}{r}-\frac{1}{2}\right)+\frac{s^{\prime}}{2}}
		\| |\xi|^{s^{\prime}} \hat{\phi}(t) \|_{L^2} \\
	&\le
		\left( \langle t \rangle^{\frac{n}{2}\left(\frac{1}{r}-\frac{1}{2}\right)}
		\|  \hat{\phi}(t) \|_{L^2} \right)^{1-\frac{s^{\prime}}{s}}
		\left( \langle t \rangle^{\frac{n}{2}\left(\frac{1}{r}-\frac{1}{2}\right)+\frac{s}{2}}
		\| |\xi|^{s} \hat{\phi}(t) \|_{L^2} \right)^{\frac{s^{\prime}}{s}} \\
	&\le \| \phi \|_{X(T)}.
\end{align*}
In the same way, we have
\begin{align*}
	\langle t \rangle^{\frac{n}{2}\left(\frac{1}{r}-\frac{1}{2}\right)-\frac{\beta}{2}}
		\| | \cdot |^{\beta} \phi(t) \|_{L^2}
	&\le
	\left( \langle t \rangle^{\frac{n}{2}\left(\frac{1}{r}-\frac{1}{2}\right)}
		\| \phi(t) \|_{L^2} \right)^{1-\frac{\beta}{\alpha}}
	\left( \langle t \rangle^{\frac{n}{2}\left(\frac{1}{r}-\frac{1}{2}\right)-\frac{\alpha}{2}}
		\| | \cdot |^{\alpha} \phi(t) \|_{L^2} \right)^{\frac{\beta}{\alpha}} \\
	&\le \| \phi \|_{X(T)}.
\end{align*}

\noindent
(ii) If $\gamma$ satisfies
$2\le \gamma < \infty$ and the condition in \eqref{eq_ip},
we apply the Sobolev inequality and obtain
\begin{align}
\label{sob1}
	\langle t \rangle^{\frac{n}{2}\left( \frac{1}{r}-\frac{1}{\gamma} \right)}
	\| \phi(t) \|_{L^{\gamma}}
	\lesssim
	\langle t \rangle^{\frac{n}{2}\left( \frac{1}{r}-\frac{1}{\gamma} \right)}
	\| |\nabla|^{s^{\prime}} \phi (t) \|_{L^2}
\end{align}
with
$s^{\prime} = n ( \frac{1}{2} - \frac{1}{\gamma} )$.
Clearly, $s^{\prime} \in [0,s]$ holds under the condition in \eqref{eq_ip} and hence,
(i) implies
\[
	\langle t \rangle^{\frac{n}{2}\left( \frac{1}{r}-\frac{1}{\gamma} \right)}
	\| |\nabla|^{s^{\prime}} \phi (t) \|_{L^2}
	= \langle t \rangle^{\frac{n}{2}\left( \frac{1}{r}-\frac{1}{2} \right)+\frac{s^{\prime}}{2}}
	\| |\nabla|^{s^{\prime}} \phi (t) \|_{L^2}
	\lesssim \| \phi \|_{X(T)}.
\]
When $2s > n$ and $\gamma = \infty$, instead of \eqref{sob1},
using \eqref{eq_ip3} below with
$\varphi = \hat{\phi}$, $\gamma =1$ and $\beta=s$,
we have
\begin{align*}
	\langle t \rangle^{\frac{n}{2r}}
	\| \phi(t) \|_{L^{\infty}}
	&\lesssim
	\langle t \rangle^{\frac{n}{2r}}
	\| \hat{\phi}(t) \|_{L^{1}} \\
	& \lesssim
	\left( \langle t \rangle^{\frac{n}{2}\left( \frac{1}{r}-\frac{1}{2} \right) + \frac{s}{2}}
		\| |\nabla|^s \phi (t) \|_{L^2} \right)^{\frac{n}{2s}}
	\left(
		\langle t \rangle^{\frac{n}{2}\left( \frac{1}{r}-\frac{1}{2} \right)}
			\| \phi (t) \|_{L^2}
	\right)^{1-\frac{n}{2s}} \\
	& \lesssim
	\| \phi \|_{X(T)}.
\end{align*}

On the other hand,
to prove \eqref{eq_ip} in the case
$r \le \gamma \le 2$,
we first claim that
\begin{align}
\label{eq_ip3}
	\| \varphi \|_{L^{\gamma}}
	\lesssim 	\| \varphi \|_{L^{2}}^{1-\frac{n}{\beta}\frac{2-\gamma}{2\gamma}}
		\| |\cdot |^{\beta} \varphi \|_{L^{2}}^{\frac{n}{\beta}\frac{2-\gamma}{2\gamma}}
\end{align}
with $\beta$ satisfying $\beta > n ( \frac{1}{\gamma} - \frac{1}{2} )$
and for $\varphi \in H^{0,\beta}$.
Indeed, let $\theta$ be determined later and we calculate
\begin{align*}
	\| \varphi \|_{L^{\gamma}}^{\gamma}
	&= \int_{\mathbb{R}^n}
		( \theta^2 + |x|^2 )^{-\frac{\beta \gamma}{2}}
			( \theta^2 + |x|^2 )^{\frac{\beta \gamma}{2}}
		| \varphi(x) |^{\gamma} dx \\
	&\le \left(
			\int_{\mathbb{R}^n} ( \theta^2 + |x|^2 )^{\beta} |\varphi(x)|^2 dx
		\right)^{\frac{\gamma}{2}}
		\left( \int_{\mathbb{R}^n}
			( \theta^2 + |x|^2 )^{-\frac{\beta \gamma}{2-\gamma}} dx
			\right)^{\frac{2-\gamma}{2}}.
\end{align*}
Since
$\frac{\beta \gamma}{2-\gamma} > \frac{n}{2}$, we have
\[
	\int_{\mathbb{R}^n}
			( \theta^2 + |x|^2 )^{-\frac{\beta \gamma}{2-\gamma}} dx
	\lesssim \theta^{-\frac{2 \beta \gamma}{2-\gamma} + n}
\]
and hence,
\begin{align*}
	\| \varphi \|_{L^{\gamma}}
	&\lesssim
	\theta^{-\beta+\frac{n(2-\gamma)}{2\gamma}}
	\left(
		\int_{\mathbb{R}^n} ( \theta^2 + |x|^2 )^{\beta} |\phi(x)|^2\,dx
	\right)^{\frac{1}{2}} \\
	&\lesssim
	\theta^{\frac{n(2-\gamma)}{2\gamma}} \| \varphi \|_{L^2}
	+ \theta^{-\beta + \frac{n(2-\gamma)}{2\gamma}} \| |\cdot|^{\beta} \varphi \|_{L^2}.
\end{align*}
Taking
$\theta = \| \varphi \|_{L^2}^{-\frac{1}{\beta}}\| |\cdot|^{\beta}\varphi \|_{L^2}^{\frac{1}{\beta}}$,
we have \eqref{eq_ip3}.

From \eqref{eq_ip3}, for $r\le \gamma \le 2$,
letting $\beta$ satisfy
$n(\frac{1}{\gamma}-\frac{1}{2}) < \beta \le \alpha$,
we obtain
\begin{align*}
	\langle t \rangle^{\frac{n}{2}\left( \frac{1}{r}-\frac{1}{\gamma} \right)}
	\| \phi(t) \|_{L^{\gamma}}
	\le \left( \langle t \rangle^{\frac{n}{2}\left( \frac{1}{r}-\frac{1}{2} \right)}
		\| \phi(t) \|_{L^{2}} \right)^{1-\frac{n}{\beta}\frac{2-r}{2r}}
		\left( \langle t \rangle^{\frac{n}{2}\left( \frac{1}{r}-\frac{1}{2} \right)- \frac{\beta}{2} }
		\| |\cdot |^{\beta} \phi(t) \|_{L^{2}} \right)^{\frac{n}{\beta}\frac{2-r}{2r}}
\end{align*}
and hence, (i) gives
\[
	\langle t \rangle^{\frac{n}{2}\left( \frac{1}{r}-\frac{1}{\gamma} \right)}
	\| \phi(t) \|_{L^{\gamma}}
	\lesssim \| \phi \|_{X(T)}.
\]
Thus, we finish the proof.
\end{proof}

%%%%%%%%%%%%%%%%%%%%%%%%%%%%%%%%%%%%%%%%%%%%%%%%%
%%%%%%%%%%%%%%%%%%%%%%%%%%%%%%%%%%%%%%%%%%%%%%%%%
\subsection{Proof of Theorems \ref{thm_lwp}\ and \ref{thm_gwp}\ 
in higher dimensional cases}

We start with the estimate of the Duhamel term.
Let $\| \cdot \|_{Y(T)}$ be defined by \eqref{ynorm}.
%%%%%%%%%%%%%%%%%%%%%%%%%%%%%%%%%%%%%%%%%%%%%%%%%
\begin{lemma}\label{lem_duh}
Under the assumption in Theorem \ref{thm_lwp}, we have
\begin{align}
\label{est_duh1}
	\left\| \int_0^t \mathcal{D}(t-\tau) \psi (\tau) d\tau \right\|_{X(T)}
	\lesssim \int_0^T \| \psi \|_{Y(\tau)} d\tau
\end{align}
for $0<T\le 1$.
Moreover, we have
\begin{align}
\label{est_duh2}
	\left\| \int_0^t \mathcal{D}(t-\tau) \psi (\tau) d\tau \right\|_{X(T)}
	\lesssim
	\| \psi \|_{Y(T)}
	\begin{cases}
		1
			&\mbox{if}\quad p>1+\frac{2r}{n},\\
		\log \left( 2+T \right)
			&\mbox{if}\quad p = 1+\frac{2r}{n},\\
		\langle T \rangle^{1-\frac{n}{2r}(p-1)}
			&\mbox{if}\quad p< 1+ \frac{2r}{n}
	\end{cases}
\end{align}
for $r=1$, $0<T<\infty$, and
\begin{align}
\label{est_duh3}
	\left\| \int_0^t \mathcal{D}(t-\tau) \psi (\tau) d\tau \right\|_{X(T)}
	\lesssim
	\| \psi \|_{Y(T)}
	\begin{cases}
		1
			&\mbox{if}\quad p \ge 1+\frac{2r}{n},\\
		\langle T \rangle^{1-\frac{n}{2r}(p-1)}
			&\mbox{if}\quad p< 1+ \frac{2r}{n}
	\end{cases}
\end{align}
for $r \in (1,2]$, $0<T<\infty$.
Here we may take $T=\infty$ if
$p>1+\frac{2}{n}, r =1$ or $p\ge 1+\frac{2r}{n}, r \in (1,2]$.
\end{lemma}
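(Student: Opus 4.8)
The plan is to estimate the Duhamel term $\int_0^t \mathcal{D}(t-\tau)\psi(\tau)\,d\tau$ in each of the three ingredients of the $X(T)$-norm separately---the $L^2$-norm of $|\nabla|^s\phi$, the $L^2$-norm of $\phi$ itself, and the weighted $L^2$-norm $\||\cdot|^\alpha\phi\|_{L^2}$---by inserting the linear estimates of Lemma \ref{lem_lin} inside the time integral and then carrying out the $\tau$-integration against the appropriate powers of $\langle t-\tau\rangle$ and $\langle\tau\rangle$. For each term I would split the integral as $\int_0^{t/2}+\int_{t/2}^t$: on $[0,t/2]$ the factor $\langle t-\tau\rangle\sim\langle t\rangle$ carries the decay and the integrand is controlled by the $L^\gamma$-pieces of $\|\psi\|_{Y(\tau)}$ (choosing $\gamma\in[\sigma_1,\sigma_2]$ optimally, e.g. $\gamma=\sigma_1$), while on $[t/2,t]$ one has $\langle\tau\rangle\sim\langle t\rangle$ and the remaining singularity in $\langle t-\tau\rangle$ must be integrable. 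The exponential tails $e^{-(t-\tau)/4}$ from Lemma \ref{lem_lin} are harmless and integrate to an $O(1)$ contribution.

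The short-time bound \eqref{est_duh1} is the easiest: for $0<T\le 1$ all weights $\langle t\rangle$, $\langle\tau\rangle$ are comparable to $1$, so after applying Lemma \ref{lem_lin} one simply bounds $\|\mathcal{D}(t-\tau)\psi(\tau)\|$ by $\|\psi(\tau)\|_{Y(\tau)}$ up to a locally integrable factor of $\langle t-\tau\rangle^{-\frac{n}{2}(\frac1\gamma-\frac12)}$ or the like, then integrates in $\tau$ over $[0,T]$. The main work is \eqref{est_duh2} and \eqref{est_duh3}. Here, after substituting the linear estimates and the definitions of $\eta$, $\zeta$, $\rho$, $q$, $\sigma_1$, $\sigma_2$ from Table 2, the temporal integral reduces to one of the standard form
\[
	\int_0^t \langle t-\tau\rangle^{-a}\langle\tau\rangle^{-b}\,d\tau,
\]
whose behavior is governed by Lemma in the Appendix (the elementary time-integral lemma): it is bounded by $\langle t\rangle^{-a}$ when $b>1$, by $\langle t\rangle^{-a}\log(2+t)$ when $b=1$, and by $\langle t\rangle^{1-a-b}$ when $b<1$. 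The exponent $b$ in each case turns out to be exactly $\frac{n}{2r}(p-1)$ coming from the nonlinear time-weight $\langle\tau\rangle^{-\frac{np}{2}(1/r-1/\gamma)}$ combined with the gain from $\|\psi\|_{Y(\tau)}$, which is precisely why the trichotomy $p\gtrless 1+\frac{2r}{n}$ (equivalently $b\gtrless 1$) appears, and why $r=1$ produces the extra logarithmic borderline case absent for $r>1$.

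The delicate step---and the one I expect to be the main obstacle---is verifying, for the weighted piece $\||\cdot|^\alpha\phi\|_{L^2}$, that the various exponent bookkeeping closes consistently with the definition of $\zeta$ and the constraint $\alpha>n(\frac1r-\frac12)$. The weighted estimate \eqref{est_wei} produces two structurally different terms (one with a gain $\langle t-\tau\rangle^{\beta/2}$ paired with $\|\psi\|_{L^\gamma}$, one with $\||\cdot|^\beta\psi\|_{L^\nu}$), and one must check that choosing $\nu=q$, $\beta=\alpha$ makes the time-weight $\zeta$ in the $Y(T)$-norm align with the target $X(T)$-weight $\langle t\rangle^{\frac{n}{2}(1/r-1/2)-\alpha/2}$ after integration, in each branch of the trichotomy. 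I would also need to confirm that the endpoint choices $\gamma\in[\sigma_1,\sigma_2]$ remain admissible under the hypotheses of Theorem \ref{thm_lwp} (so that the $L^\gamma$-norms of $\psi$ are indeed controlled by $\|\psi\|_{Y(T)}$), and finally invoke the continuity statements \eqref{con1} to conclude that the resulting function lies in $X(T)$ with the stated time-continuity. The statement that $T=\infty$ is admissible when $p>1+\frac2n$ (for $r=1$) or $p\ge 1+\frac{2r}{n}$ (for $r>1$) follows because in exactly those ranges $b>1$, so no growing factor of $T$ enters the bound.
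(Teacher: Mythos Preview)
Your outline is correct and matches the paper's proof closely: split each of the three $X(T)$-components, decompose $\int_0^t = \int_0^{t/2} + \int_{t/2}^t$, apply Lemma~\ref{lem_lin} with appropriate $(s_1,s_2,\gamma)$ on each piece, and reduce to the integral $\int_0^t \langle t-\tau\rangle^{-a}\langle\tau\rangle^{-b}\,d\tau$ with $b=\frac{n}{2r}(p-1)$. Two small sharpenings worth noting: on the near-time piece $[t/2,t]$ the paper does not use an $L^\gamma$-component with $\gamma\in[\sigma_1,\sigma_2]$ but rather the specific choices $\gamma=\rho$, $s_2=[s]$ (for the $|\nabla|^s$-piece) and $\gamma=q$ (for the $L^2$- and weighted pieces), so that the resulting $\langle t-\tau\rangle^{-1/2}$ singularity is integrable and the $\eta$- and $\zeta$-weighted norms in $Y(T)$ are invoked directly; and on $[0,t/2]$ the paper's default is $\gamma=r$ rather than $\gamma=\sigma_1$, with $\gamma=\sigma_1<r$ reserved precisely for the critical case $p=1+\frac{2r}{n}$, $r>1$, where the strict inequality $\sigma_1<r$ makes $\frac{n}{2}(\frac{p}{r}-\frac{1}{\sigma_1})<1$ and thereby eliminates the logarithm---this is exactly the mechanism behind the difference between \eqref{est_duh2} and \eqref{est_duh3} at the critical exponent.
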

%%%%%%%%%%%%%%%%%%%%%%%%%%%%%%%%%%%%%%%%%%%%%%%%%
\begin{proof}
The estimate \eqref{est_duh1} is easily proved by
looking at the proof of \eqref{est_duh2} and \eqref{est_duh3} carefully,
and we may omit it.

\noindent
{\bf Step1:}
Estimate of $\| |\nabla|^s \int_0^t \mathcal{D}(t-\tau) \psi (\tau)\,d\tau \|_{L^2}$.
We have
\begin{align*}
	\left\| |\nabla|^s \int_0^t \mathcal{D}(t-\tau) \psi (\tau)\,d\tau \right\|_{L^2} &\le
		\int_0^{\frac{t}{2}} \| |\nabla|^s \mathcal{D}(t-\tau)\psi(\tau)\|_{L^2}\,d\tau
		+\int_{\frac{t}{2}}^t \| |\nabla|^s \mathcal{D}(t-\tau)\psi(\tau)\|_{L^2}\,d\tau \\
	& =: I + I\!I.
\end{align*}
For $I\!I$, we apply Lemma \ref{lem_lin} with
$s_1=s, s_2=[s], \gamma = \rho$
(see Table 2 for notations)
and have
\begin{align*}
	I\!I &\lesssim
		\int_{\frac{t}{2}}^t \langle t-\tau \rangle^{
			-\frac{s-[s]}{2} -\frac{n}{2}\left( \frac{1}{\rho}-\frac{1}{2} \right) }
		\| |\nabla|^{[s]} \psi (\tau) \|_{L^{\rho}}\, d\tau
		+ \int_{\frac{t}{2}}^t e^{-\frac{t-\tau}{4}}
			\| |\nabla|^s \langle \nabla \rangle^{-1} \psi(\tau) \|_{L^2}\,d\tau.
\end{align*}
The Sobolev inequality implies
\begin{align}
\label{sobs}
	\| |\nabla|^s \langle \nabla \rangle^{-1} \psi(\tau) \|_{L^2}
	\lesssim
	\| |\nabla|^{[s]} \psi (\tau) \|_{L^{\rho}}
\end{align}
and hence,
\begin{align*}
	I\!I &\lesssim \int_{\frac{t}{2}}^t \langle t-\tau \rangle^{%
			-\frac{n}{2}\left( \frac{1}{\rho}-\frac{1}{2} \right)-\frac{s-[s]}{2}}
		\| |\nabla|^{[s]} \psi (\tau) \|_{L^{\rho}}\, d\tau \\
	&\lesssim \| \psi \|_{Y(T)} \int_{\frac{t}{2}}^t
	\langle t-\tau \rangle^{-\frac{n}{2}\left( \frac{1}{\rho}-\frac{1}{2} \right)-\frac{s-[s]}{2} }
	\langle \tau\rangle^{-\eta}\,d\tau \\
	&\lesssim \langle t \rangle^{%
		-\frac{s}{2}-\frac{n}{2}\left( \frac{1}{r}-\frac{1}{2} \right)
		+1 -\frac{n}{2r}(p-1)}%
		\| \psi \|_{Y(T)}.
\end{align*}

Next, we estimate $I$.
Applying Lemma \ref{lem_lin} with
$s_1 = s, s_2 =0$ and $\gamma \in [\sigma_1, \sigma_2]$
(see Table 2 for notations)
determined later,
we have
\begin{align*}
	I &\lesssim
		\int_0^{\frac{t}{2}} \langle t-\tau \rangle^{%
			-\frac{n}{2}\left( \frac{1}{\gamma}-\frac{1}{2} \right) -\frac{s}{2}}%
		\| \psi (\tau) \|_{L^{\gamma}}\, d\tau
		+\int_0^{\frac{t}{2}}
		 e^{-\frac{t-\tau}{4}} \| |\nabla|^s \langle \nabla \rangle^{-1} \psi (\tau) \|_{L^2}\,
		d\tau \\
	&=: I_1 + I_2.
\end{align*}
We calculate
$I_2$.
The Sobolev embedding and the definition of $Y(T)$-norm imply
\begin{align*}
	I_2 &\lesssim \int_0^{\frac{t}{2}}
		 e^{-\frac{t-\tau}{4}} \| |\nabla|^{[s]}\psi (\tau) \|_{L^{\rho}}\,
		d\tau
		\lesssim \| \psi \|_{Y(T)} \int_0^{\frac{t}{2}}
			e^{-\frac{t-\tau}{4}} \langle \tau \rangle^{-\eta}\, d\tau
		\lesssim e^{-\frac{t}{8}} \| \psi \|_{Y(T)}.
\end{align*}
We divide the estimate of $I_1$ into three cases.

\noindent
{\bf Case 1:} When $p <1+\frac{2r}{n}$, taking
$\gamma = r$,
and noting
$-\frac{n}{2r}(p-1) > -1$
and
$r \in [\sigma_1, \sigma_2]$,
we see that
\begin{align*}
	I_1 &\lesssim \| \psi \|_{Y(T)}
		\int_0^{\frac{t}{2}} \langle t-\tau \rangle^{%
			-\frac{n}{2}\left( \frac{1}{r} - \frac{1}{2} \right) - \frac{s}{2}}%
		\langle \tau \rangle^{-\frac{n}{2r}(p-1)}\,d\tau \\
	&\lesssim \| \psi \|_{Y(T)}
		\langle t \rangle^{%
			-\frac{n}{2}\left( \frac{1}{r} - \frac{1}{2} \right) - \frac{s}{2}
			+ 1-\frac{n}{2r}(p-1)}.%
\end{align*}

\noindent
{\bf Case 2: } When $p>1+\frac{2r}{n}$, taking $\gamma = r$ and noting
$-\frac{n}{2r}(p-1) < -1$,
we infer that
\begin{align*}
	I_1 &\lesssim
		\| \psi \|_{Y(T)}
		\int_0^{\frac{t}{2}} \langle t-\tau \rangle^{%
			-\frac{n}{2}\left( \frac{1}{r} - \frac{1}{2} \right) - \frac{s}{2}}%
		\langle \tau \rangle^{-\frac{n}{2r}(p-1)}\,d\tau \\
	&\lesssim \| \psi \|_{Y(T)}
		\langle t \rangle^{%
			-\frac{n}{2}\left( \frac{1}{r} - \frac{1}{2} \right) - \frac{s}{2} }.%
\end{align*}

\noindent
{\bf Case 3-1:} When $p=1+\frac{2r}{n}$ and $r>1$, 
we take $\gamma = \sigma_1$
and have
$-\frac{n}{2} ( \frac{p}{r}-\frac{1}{\sigma_1} ) = -\frac{n}{2}(\frac{1}{r}-\frac{1}{\sigma_1} ) -1 > -1$,
since $r > \sigma_1$.
Thus, we obtain
\begin{align*}
	I_1 &\lesssim
		\| \psi \|_{Y(T)}
		\int_0^{\frac{t}{2}} \langle t-\tau \rangle^{%
			-\frac{n}{2}\left( \frac{1}{\sigma_1} - \frac{1}{2} \right) - \frac{s}{2}}%
		\langle \tau \rangle^{-\frac{n}{2}\left( \frac{p}{r}-\frac{1}{\sigma_1} \right) }\,d\tau \\
	&\lesssim \| \psi \|_{Y(T)}
		\langle t \rangle^{%
			-\frac{n}{2}\left( \frac{1}{\sigma_1} - \frac{1}{2} \right) - \frac{s}{2}
			-\frac{n}{2}\left( \frac{1}{r}-\frac{1}{\sigma_1} \right) } \\%
	&= \| \psi \|_{Y(T)} \langle t \rangle^{%
			-\frac{n}{2}\left( \frac{1}{r} - \frac{1}{2} \right) - \frac{s}{2} }.%
\end{align*}

\noindent
{\bf Case 3-2:} When $p=1+\frac{2r}{n}$ and $r=1$,
taking $\gamma = r$, we see that
\begin{align*}
	I_1&\lesssim \| \psi \|_{Y(T)}
		\int_0^{\frac{t}{2}} \langle t-\tau \rangle^{%
			-\frac{n}{2}\left( \frac{1}{r} - \frac{1}{2} \right) - \frac{s}{2}}%
		\langle \tau \rangle^{-\frac{n}{2r}(p-1) }\, d\tau \\
	& \lesssim \| \psi \|_{Y(T)}
		\langle t \rangle^{%
			-\frac{n}{2}\left( \frac{1}{r} - \frac{1}{2} \right) - \frac{s}{2}}%
			\log (1+t).
\end{align*}

\noindent
{\bf Step2:}
Estimate of
$\| \int_0^t \mathcal{D}(t-\tau) \psi (\tau)\,d\tau \|_{L^2}$.
We have
\begin{align*}
	\left\| \int_0^t \mathcal{D}(t-\tau) \psi (\tau)\,d\tau \right\|_{L^2}
	&\lesssim \int_0^{\frac{t}{2}} \| \mathcal{D}(t-\tau) \psi (\tau) \|_{L^2}\, d\tau 
		+ \int_{\frac{t}{2}}^t \| \mathcal{D}(t-\tau) \psi (\tau) \|_{L^2}\, d\tau \\
	&=: I\!I\!I + I\!V.
\end{align*}
For $I\!V$, we apply Lemma \ref{lem_lin} with
$s_1 = s_2 = 0$ and $\gamma = q$,
where $q$ is defined in Section \ref{se_no},
and obtain
\begin{align*}
	I\!V & \lesssim \int_{\frac{t}{2}}^t \left[
		\langle t-\tau \rangle^{-\frac{n}{2}\left( \frac{1}{q} - \frac{1}{2} \right)}
		\| \psi (\tau) \|_{L^q}
		+ e^{-\frac{t-\tau}{4}} \| \langle \nabla \rangle^{-1} \psi(\tau) \|_{L^2}\right] \,d\tau \\
	&\lesssim
		\int_{\frac{t}{2}}^t 
		\langle t-\tau \rangle^{-\frac{n}{2}\left( \frac{1}{q} - \frac{1}{2} \right)}
		\| \psi (\tau) \|_{L^q}\, d\tau.		
\end{align*}
Here we have used the Sobolev embedding
$\| \langle \nabla \rangle^{-1} \psi(\tau) \|_{L^2} \lesssim \| \psi (\tau) \|_{L^q}$.
Since
$q = \frac{2n}{n+2} \in [\sigma_1, \sigma_2]$
and
$-\frac{n}{2}( \frac{1}{q}-\frac{1}{2} ) = -\frac{1}{2}$,
we have
\begin{align*}
	I\!V & \lesssim
		\| \psi \|_{Y(T)}
		\int_{\frac{t}{2}}^t 
		\langle t-\tau \rangle^{-\frac{n}{2}\left( \frac{1}{q} - \frac{1}{2} \right)}
		\langle \tau \rangle^{-\frac{n}{2}\left( \frac{p}{r}-\frac{1}{q} \right)}\, d\tau \\
	&\lesssim \| \psi \|_{Y(T)}
		\langle t \rangle^{-\frac{n}{2}\left( \frac{1}{r} - \frac{1}{2} \right) %
						+1 - \frac{n}{2r}(p-1) }.%
\end{align*}

Let us estimate the term $I\!I\!I$.
Applying Lemma \ref{lem_lin} with
$s_1=s_2=0$ and $\gamma \in [1,2]$ determined later, we demonstrate
\begin{align*}
	I\!I\!I & \lesssim
		\int_0^{\frac{t}{2}} \langle t-\tau \rangle^{%
			-\frac{n}{2}\left( \frac{1}{\gamma} - \frac{1}{2} \right) }%
			\| \psi (\tau) \|_{L^{\gamma}}\, d\tau
		+ \int_0^{\frac{t}{2}} e^{-\frac{t-\tau}{4}}
			\| \langle \nabla \rangle^{-1} \psi(\tau) \|_{L^2}\, d\tau \\
	&=: I\!I\!I_1 + I\!I\!I_2.
\end{align*}
From $q \in [\sigma_1, \sigma_2]$, 
the Sobolev embedding and the definition of $Y(T)$-norm imply
\begin{align*}
	I\!I\!I_2 &\lesssim \int_0^{\frac{t}{2}} e^{-\frac{t-\tau}{4}} \| \psi (\tau) \|_{L^q}\, d\tau
	\lesssim \| \psi \|_{Y(T)} \int_0^{\frac{t}{2}} e^{-\frac{t-\tau}{4}}
		\langle \tau \rangle^{-\frac{n}{2}\left( \frac{p}{r}-\frac{1}{q}\right) }\, d\tau
	\lesssim e^{-\frac{t}{8}} \| \psi \|_{Y(T)}.
\end{align*}
Similarly to the estimate of $I_1$, we divide the estimate of $I\!I\!I_1$ into three cases.

\noindent
{\bf Case 1:} When $p<1+\frac{2r}{n}$, we take $\gamma = r$ to obtain
\begin{align*}
	I\!I\!I_1 & \lesssim
		\| \psi \|_{Y(T)} \int_0^{\frac{t}{2}} 
		\langle t-\tau \rangle^{-\frac{n}{2}\left( \frac{1}{r} - \frac{1}{2} \right)}
		\langle \tau \rangle^{-\frac{n}{2r}(p-1)}\, d\tau\\
		&\lesssim \| \psi \|_{Y(T)}
			\langle t \rangle^{%
			-\frac{n}{2}\left( \frac{1}{r} - \frac{1}{2} \right) + 1-\frac{n}{2r}(p-1)},%
\end{align*}
where we have used $-\frac{n}{2r}(p-1) > -1$.

\noindent
{\bf Case 2:} When $p>1+\frac{2r}{n}$, taking $\gamma =r$, we see that
\begin{align*}
	I\!I\!I_1&\lesssim \| \psi \|_{Y(T)}
		\int_0^{\frac{t}{2}}
		\langle t-\tau \rangle^{ -\frac{n}{2}\left( \frac{1}{r}-\frac{1}{2} \right)}
		\langle \tau \rangle^{-\frac{n}{2r}(p-1)} \,d\tau \\
		&\lesssim
		\| \psi \|_{Y(T)} \langle t \rangle^{-\frac{n}{2}\left( \frac{1}{r}-\frac{1}{2} \right)},
\end{align*}
since $-\frac{n}{2r}(p-1) < -1$.

\noindent
{\bf Case 3-1:} When $p=1+\frac{2r}{n}$ and $r>1$,
we let $\gamma = \sigma_1$ to obtain
\begin{align*}
	I\!I\!I_1 &\lesssim \| \psi \|_{Y(T)}
		\int_0^{\frac{t}{2}}
		\langle t-\tau \rangle^{ -\frac{n}{2}\left( \frac{1}{\sigma_1}-\frac{1}{2} \right)}
		\langle \tau \rangle^{-\frac{n}{2}\left( \frac{p}{r}-\frac{1}{\sigma_1}\right) } \,d\tau \\
	&\lesssim \| \psi \|_{Y(T)}
		\langle t \rangle^{-\frac{n}{2}\left( \frac{1}{r}-\frac{1}{2} \right)}.
\end{align*}
Here we have used
$-\frac{n}{2}( \frac{p}{r}-\frac{1}{\sigma_1}) = -\frac{n}{2}( \frac{1}{r} - \frac{1}{\sigma_1} ) -1 > -1$.

\noindent
{\bf Case 3-2:} When $p=1+\frac{2r}{n}$ and $r=1$,
taking $\gamma = r$ gives
\begin{align*}
	I\!I\!I_1&\lesssim \| \psi \|_{Y(T)}
		\int_0^{\frac{t}{2}} 
		\langle t-\tau \rangle^{ -\frac{n}{2}\left( \frac{1}{r}-\frac{1}{2} \right)}
		\langle \tau \rangle^{-\frac{n}{2r}(p-1)} \,d\tau \\
	&\lesssim \| \psi \|_{Y(T)}
		\langle t \rangle^{ -\frac{n}{2}\left( \frac{1}{r}-\frac{1}{2} \right)} \log (1+t),
\end{align*}
since $-\frac{n}{2r}(p-1) = -1$.

\noindent
{\bf Step3:}
Estimate of
$\| |\cdot|^{\alpha} \int_0^t \mathcal{D}(t-\tau) \psi (\tau)\,d\tau \|_{L^2}$.
We apply Lemma \ref{lem_lin} with
$\beta = \alpha, \nu = q$ and $\gamma \in [\sigma_1, \sigma_2]$ determined later,
and obtain
\begin{align*}
	& \left\| |\cdot|^{\alpha} \int_0^t \mathcal{D}(t-\tau) \psi (\tau)\,d\tau \right\|_{L^2} \\
	&\quad \lesssim
	\int_0^t \langle t-\tau \rangle^{%
		-\frac{n}{2}\left( \frac{1}{\gamma}-\frac{1}{2} \right) + \frac{\alpha}{2} }%
		\| \psi (\tau) \|_{L^{\gamma}}\, d\tau
		+ \int_0^t \langle t-\tau \rangle^{%
		-\frac{n}{2}\left( \frac{1}{q} - \frac{1}{2} \right)}%
		\| |\cdot |^{\alpha} \psi(\tau) \|_{L^q}\,d\tau \\
	&\qquad + \int_0^t e^{-\frac{t-\tau}{4}}
		\| \langle \cdot \rangle^{\alpha} \langle \nabla \rangle^{-1} \psi (\tau) \|_{L^2}\,d\tau \\
	&\quad =: V_1 + V_2 + V_3.
\end{align*}
In order to estimate $V_3$, we employ Lemma \ref{lem_sob} and deduce that
\[	
	V_3 \lesssim \int_0^t e^{-\frac{t-\tau}{4}}
		\| \langle \cdot \rangle^{\alpha} \psi (\tau) \|_{L^q}\,d\tau.
\]
Therefore, the estimates of $V_2, V_3$ reduce to that of
$ \int_0^t \langle t-\tau \rangle^{%
-\frac{n}{2}\left( \frac{1}{q} - \frac{1}{2} \right)}%
\| \langle \cdot \rangle^{\alpha} \psi(\tau) \|_{L^q}\,d\tau$.
Noting $-\frac{n}{2}( \frac{1}{q} - \frac{1}{2} )=-\frac{1}{2}$,
we have
\begin{align*}
	\int_0^t \langle t-\tau \rangle^{%
		-\frac{n}{2}\left( \frac{1}{q} - \frac{1}{2} \right)}%
		\| \langle \cdot \rangle^{\alpha} \psi(\tau) \|_{L^q}\,d\tau
	 \lesssim \| \psi \|_{Y(T)}
		\int_0^t \langle t-\tau \rangle^{%
		-\frac{n}{2}\left( \frac{1}{q} - \frac{1}{2} \right)}%
		\langle \tau \rangle^{-\zeta}\, d\tau,
\end{align*}
where $\zeta$ is defined in Section \ref{se_no}.
We compute
\[
	\int_0^t \langle t-\tau \rangle^{%
		-\frac{n}{2}\left( \frac{1}{q} - \frac{1}{2} \right)}%
		\langle \tau \rangle^{-\zeta}\, d\tau
	\lesssim
	\begin{cases}
	\langle t \rangle^{%
		-\frac{n}{2}\left( \frac{1}{r}-\frac{1}{2} \right)+\frac{\alpha}{2}%
		+1 -\frac{n}{2r}(p-1)}%
	&(\zeta < 1),\\
	\langle t \rangle^{-\frac{1}{2}} \log (1+t)
	&(\zeta =1),\\
	\langle t \rangle^{-\frac{1}{2}}
	&(\zeta >1)
	\end{cases}
\]	
and note that
$\zeta <1$ holds if $p < 1+\frac{2r}{n}$.
Therefore, we may summarize them as
\[
	\int_0^t \langle t-\tau \rangle^{%
		-\frac{n}{2}\left( \frac{1}{q} - \frac{1}{2} \right)}%
		\langle \tau \rangle^{-\zeta}\, d\tau
	\lesssim
	\begin{cases}
	\langle t \rangle^{%
		-\frac{n}{2}\left( \frac{1}{r}-\frac{1}{2} \right)+\frac{\alpha}{2}%
		+1 -\frac{n}{2r}(p-1)}%
	&(p < 1+\frac{2r}{n}),\\
	\langle t \rangle^{%
		-\frac{n}{2}\left( \frac{1}{r}-\frac{1}{2} \right)+\frac{\alpha}{2}}%
	&(p\ge 1+\frac{2r}{n})
	\end{cases}
\]
and hence, we have
\[
	V_2 + V_3 \lesssim
	\| \psi \|_{Y(T)}
	\begin{cases}
	\langle t \rangle^{%
		-\frac{n}{2}\left( \frac{1}{r}-\frac{1}{2} \right)+\frac{\alpha}{2}%
		+1 -\frac{n}{2r}(p-1)}%
	&(p < 1+\frac{2r}{n}),\\
	\langle t \rangle^{%
		-\frac{n}{2}\left( \frac{1}{r}-\frac{1}{2} \right)+\frac{\alpha}{2}}%
	&(p\ge 1+\frac{2r}{n}).
	\end{cases}
\]

For $V_1$,
as before, we divide the estimate into three cases.

\noindent
{\bf Case 1:} When $p<1+\frac{2r}{n}$,
by taking $\gamma =r$,
the definition of $Y(T)$-norm leads to
\begin{align*}
	V_1 &\lesssim
	\| \psi \|_{Y(T)} \int_0^t \langle t-\tau \rangle^{%
		-\frac{n}{2}\left( \frac{1}{r}-\frac{1}{2} \right) + \frac{\alpha}{2} }%
		\langle \tau \rangle^{-\frac{n}{2r}(p-1)} \, d\tau \\
	&\lesssim \| \psi \|_{Y(T)}
		\langle t \rangle^{%
			-\frac{n}{2}\left( \frac{1}{r}-\frac{1}{2} \right) + \frac{\alpha}{2}
			+1 -\frac{n}{2r}(p-1)},%
\end{align*}
since
$-\frac{n}{2r}(p-1)>-1$
and
$-\frac{n}{2}\left( \frac{1}{r}-\frac{1}{2} \right) + \frac{\alpha}{2}>-1$.

\noindent
{\bf Case 2:} When $p>1+\frac{2r}{n}$,
we choose $\gamma = r$ and have
\begin{align*}
	V_1 &\lesssim
	\| \psi \|_{Y(T)} \int_0^t \langle t-\tau \rangle^{%
		-\frac{n}{2}\left( \frac{1}{r}-\frac{1}{2} \right) + \frac{\alpha}{2} }%
		\langle \tau \rangle^{-\frac{n}{2r}(p-1)} \, d\tau \\
	&\lesssim \| \psi \|_{Y(T)}
		\langle t \rangle^{%
			-\frac{n}{2}\left( \frac{1}{r}-\frac{1}{2} \right) + \frac{\alpha}{2}},%
\end{align*}
since
$-\frac{n}{2r}(p-1)<-1$
and
$-\frac{n}{2}\left( \frac{1}{r}-\frac{1}{2} \right) + \frac{\alpha}{2}>-1$.

\noindent
{\bf Case 3-1:} When $p=1+\frac{2r}{n}$ and $r>1$,
letting $\gamma = \sigma_1$, we see that
\begin{align*}
	V_1&\lesssim
	\| \psi \|_{Y(T)} \int_0^t
	\langle t-\tau \rangle^{%
		-\frac{n}{2}\left( \frac{1}{\sigma_1}-\frac{1}{2} \right) + \frac{\alpha}{2}}%
		\langle \tau \rangle^{-\frac{n}{2}\left(\frac{p}{r}-\frac{1}{\sigma_1} \right)}
		\,d\tau \\
	&\lesssim
	\| \psi \|_{Y(T)}
	\langle t \rangle^{%
		-\frac{n}{2}\left( \frac{1}{\sigma_1}-\frac{1}{2} \right) + \frac{\alpha}{2}
		-\frac{n}{2}\left(\frac{p}{r}-\frac{1}{\sigma_1} \right) + 1} \\%
	&= \| \psi \|_{Y(T)}
	\langle t \rangle^{%
		-\frac{n}{2}\left( \frac{1}{r}-\frac{1}{2} \right) + \frac{\alpha}{2}}.%
\end{align*}
Here we have used that
$-\frac{n}{2}(\frac{p}{r}-\frac{1}{\sigma_1})>-1$
and
$-\frac{n}{2}\left( \frac{1}{\sigma_1}-\frac{1}{2} \right) + \frac{\alpha}{2}>-1$.

\noindent
{\bf Case 3-2:} When $p=1+\frac{2r}{n}$ and $r=1$,
taking $\gamma = r$ implies
\begin{align*}
	V_1 &\lesssim
	\| \psi \|_{Y(T)} \int_0^t
		\langle t-\tau \rangle^{%
			-\frac{n}{2}\left( \frac{1}{r}-\frac{1}{2} \right) + \frac{\alpha}{2}}%
		\langle \tau \rangle^{-\frac{n}{2r}(p-1)}\, d\tau \\
	&\lesssim \| \psi \|_{Y(T)}
			\langle t \rangle^{-\frac{n}{2}\left( \frac{1}{r}-\frac{1}{2} \right) + \frac{\alpha}{2}}
			\log (1+t),
\end{align*}
since $-\frac{n}{2r}(p-1)=-1$.
Summing up all the estimates above, we reach
\eqref{est_duh2} and \eqref{est_duh3}.
\end{proof}

%%%%%%%%%%%%%%%%%%%%%%%%%%%%%%%%%%%%%%%%%%%%%%%%%
\begin{lemma}\label{lem_nl}
Under the assumptions in Theorem \ref{thm_lwp}, we have
\begin{align*}
	\| \mathcal{N}(u) \|_{Y(T)} & \lesssim \| u \|_{X(T)}^p,\\
	\| \mathcal{N}(u) - \mathcal{N}(v) \|_{Y(T)}
		&\lesssim \| u-v \|_{X(T)} \left(  \| u \|_{X(T)} +  \| v \|_{X(T)} \right)^{p-1}.
\end{align*}
\end{lemma}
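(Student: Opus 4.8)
The plan is to estimate separately the three constituents of the $Y(T)$-norm in \eqref{ynorm}: the homogeneous Sobolev piece $\langle t\rangle^{\eta}\||\nabla|^{[s]}\mathcal{N}(u)\|_{L^{\rho}}$, the family of Lebesgue pieces $\langle t\rangle^{\frac{n}{2}(\frac{p}{r}-\frac{1}{\gamma})}\|\mathcal{N}(u)\|_{L^{\gamma}}$ with $\gamma\in[\sigma_1,\sigma_2]$, and the weighted piece $\langle t\rangle^{\zeta}\|\langle\cdot\rangle^{\alpha}\mathcal{N}(u)\|_{L^{q}}$. In every case the two basic tools are the pointwise bounds extracted from \eqref{nonlin}---taking $v=0$ yields $|\mathcal{N}^{(l)}(u)|\lesssim|u|^{p-l}$, in particular $|\mathcal{N}(u)|\lesssim|u|^{p}$---together with the interpolation inequalities of Lemma \ref{lem_ip}, which convert $L^{\gamma}$ and weighted $L^{2}$ norms of $u$ into the single quantity $\|u\|_{X(T)}$ at the cost of an explicit power of $\langle t\rangle$. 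The whole argument then reduces to bookkeeping: for each piece one picks Hölder exponents, applies Gagliardo--Nirenberg where derivatives occur, and checks that the accumulated power of $\langle t\rangle$ cancels the prefactor exactly. The difference estimate is handled by the same scheme, with the Lipschitz bound in \eqref{nonlin} replacing the pointwise bound $|\mathcal{N}(u)|\lesssim|u|^{p}$.

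For the Lebesgue pieces I would write $\|\mathcal{N}(u)\|_{L^{\gamma}}\lesssim\||u|^{p}\|_{L^{\gamma}}=\|u\|_{L^{p\gamma}}^{p}$ and invoke Lemma \ref{lem_ip}(ii) with exponent $p\gamma$; the admissibility $p\gamma\in[r,\frac{2n}{n-2s}]$ for all $\gamma\in[\sigma_1,\sigma_2]$ is exactly what the definitions $\sigma_1=\max\{1,\frac{nr}{n+r}\}$ and $\sigma_2=\min\{2,\frac{2n}{p(n-2s)_+}\}$ guarantee, the lower bound $p\sigma_1\ge r$ being equivalent to $p\ge1+\frac{r}{n}$ and the upper bound $p\sigma_2\le\frac{2n}{n-2s}$ to $p\le1+\frac{2}{n-2s}$. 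The decay produced, $\frac{np}{2}(\frac{1}{r}-\frac{1}{p\gamma})=\frac{np}{2r}-\frac{n}{2\gamma}$, cancels the prefactor $\frac{n}{2}(\frac{p}{r}-\frac{1}{\gamma})$ precisely. For the weighted piece I would distribute the weight onto a single factor, $\langle\cdot\rangle^{\alpha}|u|^{p}=(\langle\cdot\rangle^{\alpha}|u|)\,|u|^{p-1}$, and apply Hölder with $\frac{1}{q}=\frac{1}{2}+\frac{1}{n}$, estimating the weighted factor $\langle\cdot\rangle^{\alpha}u$ in $L^{2}$ (controlled by Lemma \ref{lem_ip}(i)) and the product $|u|^{p-1}$ in $L^{n}$, i.e.\ by $\|u\|_{L^{n(p-1)}}^{p-1}$ (controlled by Lemma \ref{lem_ip}(ii), using $n(p-1)\in[r,\frac{2n}{n-2s}]$); summing the two time exponents gives exactly $-\zeta$.

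The genuinely technical step, and the one I expect to be the main obstacle, is the homogeneous Sobolev piece. Since $\mathcal{N}\in C^{p_0}$ and $[s]\le p_0$, the Fa\`{a} di Bruno formula expresses $|\nabla|^{[s]}\mathcal{N}(u)$ (when $[s]\ge1$) as a finite sum of terms $\mathcal{N}^{(k)}(u)\prod_{j=1}^{k}\nabla^{b_j}u$ with $b_j\ge1$ and $\sum_j b_j=[s]$, each dominated pointwise by $|u|^{p-k}\prod_j|\nabla^{b_j}u|$ via \eqref{nonlin}; when $[s]=0$ the piece is of the same form as a Lebesgue piece. For a fixed term I would apply Hölder in $L^{\rho}$, splitting the exponent among the $p-k$ copies of $u$ and the $k$ derivative factors, and then interpolate each $\|\nabla^{b_j}u\|_{L^{a_j}}$ by Gagliardo--Nirenberg between $\|u\|_{L^2}$ and $\||\nabla|^{s}u\|_{L^2}$, both of which are controlled by $\|u\|_{X(T)}$ through Lemma \ref{lem_ip}(i). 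The delicate point is that the Hölder and Gagliardo--Nirenberg exponents must be chosen simultaneously admissible---each intermediate $L^{a_j}$ lying in the interpolation range and each interpolation parameter in $[0,1]$---and that the resulting total power of $\langle t\rangle$ must collapse to exactly $-\eta$. That these constraints are consistent is forced by the scaling of the self-similar heat profile underlying the $X(T)$- and $Y(T)$-norms, and the specific values $\rho=\frac{2n}{n+2-2(s-[s])}$, $\mu=\frac{n}{2}-\frac{1}{p-1}$ and $\eta=\frac{\mu}{2}(p-1)+\frac{s}{2}+\frac{np}{2}(\frac{1}{r}-\frac{1}{2})$ are precisely those that make the homogeneity balance; verifying this term by term is the heart of the computation.

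Finally, for the difference estimate I would differentiate $\mathcal{N}(u)-\mathcal{N}(v)$ and reorganize by telescoping, so that each summand carries one factor of the form $\mathcal{N}^{(l)}(u)-\mathcal{N}^{(l)}(v)$ or $\nabla^{b}(u-v)$. The full strength of \eqref{nonlin} for every $l\le p_0$---in particular $|\mathcal{N}^{(l)}(u)-\mathcal{N}^{(l)}(v)|\lesssim|u-v|(|u|+|v|)^{p-l-1}$---is exactly what is needed to bound these summands pointwise by products of $|u-v|$, powers of $(|u|+|v|)$, and derivative factors distributed as before. The Hölder and Gagliardo--Nirenberg bookkeeping is then identical to the case $v=0$, with $\|u-v\|_{X(T)}$ replacing one factor of $\|u\|_{X(T)}$ and $(\|u\|_{X(T)}+\|v\|_{X(T)})^{p-1}$ absorbing the remainder.
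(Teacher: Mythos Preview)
Your proposal is correct and follows essentially the same approach as the paper: the weighted and Lebesgue pieces are handled exactly as you describe, and the derivative piece is treated via Fa\`{a} di Bruno plus H\"older, with the remaining factors controlled through Lemma~\ref{lem_ip}. The one place where the paper adds substance beyond your outline is the exponent bookkeeping you defer: it normalizes each Fa\`{a} di Bruno term to always carry $[s]$ derivative factors (absorbing $|u|^{[s]-l}$ into factors with $k_j=0$), then constructs explicit parameters $s_1,\dots,s_{[s]}$ with $\max\{0,k_j-\mu\}\le s_j<k_j+\tfrac{1}{p-1}$ and $\sum_j s_j=s-\mu$, from which the H\"older exponents $q_j(k)$ are read off and the Sobolev step $\||\nabla|^{k_j}u\|_{L^{q_j(k)}}\lesssim\||\nabla|^{\mu+s_j}u\|_{L^2}$ (equivalent to your Gagliardo--Nirenberg interpolation) becomes immediate.
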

%%%%%%%%%%%%%%%%%%%%%%%%%%%%%%%%%%%%%%%%%%%%%%%%%
\begin{proof}
First, we consider the term
$\| \langle \cdot \rangle^{\alpha} \mathcal{N}(u) \|_{L^q}$.
By the assumption \eqref{nonlin} and the H\"{o}lder inequality
with $\frac{1}{q} = \frac{1}{2}+\frac{1}{n}$, we have
\[
	\| \langle \cdot \rangle^{\alpha} \mathcal{N}(u) \|_{L^q}
	\le \| \langle \cdot \rangle^{\alpha} u \|_{L^2}
		\| u \|_{L^{n(p-1)}}^{p-1}
\]
and hence,
\begin{align*}
	\langle t \rangle^{\zeta} \| \langle \cdot \rangle^{\alpha} \mathcal{N}(u) \|_{L^q}
	\le
		\langle t \rangle^{\frac{n}{2}\left(\frac{1}{r}-\frac{1}{2}\right)-\frac{\alpha}{2}}
		\| \langle \cdot \rangle^{\alpha} u \|_{L^2}
	\left( \langle t \rangle^{%
			\frac{n}{2}\left( \frac{1}{r} - \frac{1}{n(p-1)} \right)}%
		\| u \|_{L^{n(p-1)}} \right)^{p-1}.
\end{align*}
From the assumption of Theorem \ref{thm_lwp},
$r \le n(p-1)$ is valid and
$n(p-1) \le \frac{2n}{n-2s}$ also holds when $2s < n$.
Therefore, we apply Lemma \ref{lem_ip} to obtain
\[
	\langle t \rangle^{\zeta} \| \langle \cdot \rangle^{\alpha} \mathcal{N}(u) \|_{L^q}
	\lesssim \| u \|_{X(T)}^p.
\]

Next, we estimate
$ \| \mathcal{N}(u) \|_{L^{\gamma}}$
for $\gamma \in [\sigma_1, \sigma_2]$.
By the assumption \eqref{nonlin}, we see that
\begin{align*}
	\langle t \rangle^{\frac{n}{2}\left( \frac{p}{r}-\frac{1}{\gamma} \right)}
	\| \mathcal{N}(u) \|_{L^{\gamma}}
	\lesssim \left(
		\langle t \rangle^{\frac{n}{2}\left( \frac{1}{r}-\frac{1}{p\gamma} \right)}
		\| u \|_{L^{p\gamma}} \right)^p.
\end{align*}
Since
$p \ge 1+\frac{r}{n}$ and
$\gamma \ge \sigma_1 \ge \frac{nr}{n+r}$,
we have
\[
	r =  \left( 1+ \frac{r}{n} \right) \frac{nr}{n+r} \le p \gamma.
\]
Also, when $2s < n$,
the assumption $\sigma_2 \le \frac{2n}{p(n-2s)}$ leads to
\[
	p \gamma \le p \sigma_2 \le \frac{2n}{n-2s}.
\]
Therefore, $p \gamma \in [r, \frac{2n}{n-2s}]$ is valid
and we apply Lemma \ref{lem_ip} to obtain
\begin{align}
\label{est_nl1}
	\langle t \rangle^{\frac{n}{2}\left( \frac{p}{r}-\frac{1}{\gamma} \right)}
	\| \mathcal{N}(u) \|_{L^{\gamma}}
	\lesssim \| u \|_{X(T)}^p.
\end{align}

Finally, we estimate
$\| |\nabla|^{[s]} \mathcal{N}(u) \|_{L^{\rho}}$.
The case $[s] = 0$ reduces to \eqref{est_nl1}, because $\rho \in [\sigma_1, \sigma_2]$.
When $[s] \ge 1$,
by the Fa\`{a} di Bruno formula,
for $\nu \in \mathbb{Z}_{\ge 0}$ with $|\nu|\ge 1$
we have
\[
	\partial^{\nu}\mathcal{N}(u)
	= \sum_{l=1}^{|\nu|} \mathcal{N}^{(l)}(u)
	\sum_{\substack{|\nu_1|\ge 1,\ldots, |\nu_l| \ge 1\\ \nu_1+\cdots+\nu_l = \nu}}
		C_{\nu_1,\ldots,\nu_l}^l \partial^{\nu_1}u \cdots \partial^{\nu_l} u.
\]
Using this with
$|\mathcal{N}^{(l)}(u) | \lesssim |u|^{p-l} = |u|^{p-[s]} |u|^{[s]-l}$
and the H\"{o}lder inequality, we see that
\begin{align}
\label{est_non1}
	\| |\nabla|^{[s]} \mathcal{N}(u) \|_{L^{\rho}}
	\lesssim \| |u|^{p-[s]} \|_{L^{q_0}}
		\sum_{k}  \| |\nabla|^{k_1} u \|_{L^{q_1(k)}}
			\cdots  \| |\nabla|^{k_{[s]}} u \|_{L^{q_{[s]}(k)}},
\end{align}
where the sum is taken over
$k = (k_1, \ldots, k_{[s]}) \in \mathbb{Z}_{\ge 0}^{[s]}$
satisfying $|k| = k_1 + \cdots k_{[s]} = [s]$,
and
$q_1(k), \ldots, q_{[s]}(k)$ satisfy
\begin{align}
\label{hol_exp}
	\frac{1}{\rho} = \frac{1}{q_0} + \frac{1}{q_1(k)} + \cdots + \frac{1}{q_{[s]}(k)}
\end{align}
and are defined in the following way.

For each fixed $k = (k_1, \ldots, k_{[s]})$,
let us choose $s_1, \ldots, s_{[s]}$ so that
\begin{align}
\label{sj}
	\max\{ 0, k_j - \mu \} \le s_j < k_j + \frac{1}{p-1}, \quad
	\sum_{j=1}^{[s]} s_j = s-\mu,
\end{align}
where $\mu = \frac{n}{2}-\frac{1}{p-1}$.
This is always possible.
Indeed, first, it is obvious that
$\max\{ 0, k_j - \mu \} < k_j + \frac{1}{p-1}$
and the interval
$[\max\{ 0, k_j - \mu \}, k_j + \frac{1}{p-1})$ is not empty.
Next, we demonstrate that
\begin{align}
\label{s-mu}
	\sum_{j=1}^{[s]} \max\{ 0, k_j - \mu \}
	\le s - \mu
	< \sum_{j=1}^{[s]} \left( k_j + \frac{1}{p-1} \right).
\end{align}
To prove this, with a direct calculation we have
\[
	\sum_{j=1}^{[s]} \left( k_j + \frac{1}{p-1} \right) = [s] + \frac{[s]}{p-1}
		> s-\mu,
\]
since $n \ge 2$.
Also, when $[s] = 1$, it is trivial that
$\max\{ 0, [s] - \mu \} \le s -\mu$,
since the assumption that
$p \le 1+\frac{2}{n-2s}$ if $2s < n$ implies
$s \ge \mu$.
When $[s] \ge 2$,
noting that
$\mu < 0$ leads to $[s]< p < 1+\frac{2}{n} \le 2$,
we may assume $\mu \ge 0$.
Therefore, we have
\[
	\sum_{j=1}^{[s]} \max \{ 0, k_j - \mu \}
	=0 \le s - \mu
\]
if $k_j \le \mu$ for all $j=1,\ldots,[s]$
and
\[
	\sum_{j=1}^{[s]} \max \{ 0, k_j - \mu \}
	\le (k_{i} - \mu) + \sum_{j\neq i} k_j = [s] - \mu \le s-\mu
\]
if $k_i > \mu$ for some $i \in \{ 1, \ldots, [s] \}$.
Thus, we prove \eqref{s-mu} and we can actually find
$s_j \ (j = 1,\ldots, [s])$ satisfying \eqref{sj}.

From these $s_j$, we define
\[
	\frac{1}{q_j(k)} = \frac{1}{2} - \frac{\mu+s_j-k_j}{n},\quad
	\frac{1}{q_0} = \left( \frac{1}{2} - \frac{\mu}{n} \right) ( p - [s] ).
\]
Then, a straightforward calculation shows \eqref{hol_exp}.
Moreover, by the property \eqref{sj} and the assumption $p \ge 1+\frac{r}{n}$,
we have
$2 \le q_j(k) < \infty$ for $j=1,\ldots,[s]$
and $r \le n(p-1) = q_0 (p-[s]) < \infty$, respectively.
Also, we remark that
$\mu + s_j \le s$ holds for all $j=1,\ldots,[s]$ due to
$s_j \ge 0$ and $\sum_{j=1}^{[s]} s_j = s-\mu$.
Hence, we apply the Sobolev embedding and Lemma \ref{lem_ip}
to \eqref{est_non1} and obtain
\begin{align*}
	\langle t \rangle^{\eta}\| |\nabla|^{[s]} \mathcal{N}(u) \|_{L^{\rho}}
	&\lesssim \langle t \rangle^{\eta} \| u \|_{L^{q_0(p-[s])}}^{p-[s]}
			\sum_{k} \| |\nabla|^{\mu + s_1} u \|_{L^2}
			\cdots \| |\nabla |^{\mu + s_{[s]}} u \|_{L^2} \\
	&\lesssim \left( \langle t \rangle^{ \frac{n}{2}\left( \frac{1}{r} - \frac{1}{n(p-1)} \right)}
				\| u \|_{L^{n(p-1)}} \right)^{p-[s]} \\
	&\quad \times \sum_k \prod_{j=1}^{[s]}
				\langle t \rangle^{\frac{n}{2}\left( \frac{1}{r}-\frac{1}{2} \right)+\frac{\mu+s_j}{2}}
				\| |\nabla|^{\mu + s_j} u \|_{L^2} \\
	&\lesssim \| u \|_{X(T)}^p.
\end{align*}

In the same manner, with the assumption \eqref{nonlin},
we can prove the estimate for
$\| \mathcal{N}(u) - \mathcal{N}(v)\|_{Y(T)}$.
Indeed, for example,
we demonstrate
\begin{align*}
	\| \mathcal{N}(u) - \mathcal{N}(v) \|_{L^{\gamma}}
	&\lesssim \| u- v\|_{L^{p \gamma}} \| (|u|+|v|)^{p-1} \|_{L^{\frac{p \gamma}{p-1}}} \\
	&\lesssim \| u-v \|_{L^{p \gamma}}
			\left( \| u \|_{L^{p\gamma}} + \| v \|_{L^{p\gamma}} \right)^{p-1}.
\end{align*}
Hence, we find that
\begin{align*}
	\langle t \rangle^{\frac{n}{2}\left( \frac{p}{r} - \frac{1}{\gamma} \right)}
	\| \mathcal{N}(u) - \mathcal{N}(v) \|_{L^{\gamma}}
	&\lesssim
	\langle t \rangle^{\frac{n}{2}\left(\frac{1}{r} - \frac{1}{p \gamma} \right)}
	\| u - v \|_{L^{p\gamma}} 
	\left( \langle t \rangle^{\frac{n}{2}\left(\frac{1}{r} - \frac{1}{p \gamma} \right)}
			\left( \| u \|_{L^{p\gamma}} + \| v \|_{L^{p\gamma}} \right) \right)^{p-1}\\
	&\lesssim
	\| u - v \|_{X(T)} \left( \| u \|_{X(T)} + \| v \|_{X(T)} \right)^{p-1}.
\end{align*}
The other terms can be estimated in a similar way.
\end{proof}

Now we are in the position to prove Theorem \ref{thm_lwp} when $n\ge 2$. 
\begin{proof}[Proof of Theorem \ref{thm_lwp} when $n\ge 2$]
We apply the contraction mapping principle in
\begin{align}
\label{x_ep}
	X_{\varepsilon}(T)
	= \left\{
		v \in %L^{\infty}(0,T; L^2(\mathbb{R}^n))
		L^{\infty}(0,T; H^{s,0}\cap H^{0,\alpha} (\mathbb{R}^n) )
		; \| v \|_{X(T)} \le C_0 \varepsilon
		\right\},
\end{align}
where
$C_0>0$
is determined later.
Also, we define a metric in $X_{\varepsilon}(T)$ by
\begin{align}
\label{met}
	d(u,v) := \| u - v \|_{L^{\infty}(0,T; H^{s,0}\cap H^{0,\alpha})}.
\end{align}
Then, clearly,
$X_{\varepsilon}(T)$ becomes a complete metric space.

We define the mapping
$\mathcal{M}$
by
\begin{align}
\label{map}
	\mathcal{M}v(t) =
	\tilde{\mathcal{D}}(t) \varepsilon u_0 + \mathcal{D}(t)\varepsilon u_1
	+ \int_0^t \mathcal{D}(t-\tau) \mathcal{N}(v(\tau)) d\tau.
\end{align}
Let $0 < T < 1$.
Then, by Lemmas \ref{lem_lin}, \ref{lem_lin2}, \ref{lem_duh} \eqref{est_duh1}, and \ref{lem_nl}, we have
\begin{align*}
	\| \mathcal{M}v \|_{X(T)}
	\le C \varepsilon
		( \| u_0 \|_{H^{s,0}\cap H^{0,\alpha}} + \| u_1 \|_{H^{s-1,0}\cap H^{0,\alpha}} )
	+ C T \| v \|_{X(T)}^p.
\end{align*}
Taking a constant
$C_0$
so that
$C ( \| u_0 \|_{H^{s,0}\cap H^{0,\alpha}} + \| u_1 \|_{H^{s-1,0}\cap H^{0,\alpha}} ) \le \frac{C_0}{2}$,
we have
\[
	\| \mathcal{M}v \|_{X(T)}
	\le \frac{C_0}{2}\varepsilon + C T C_0^p \varepsilon^p.
\]
Letting $T$ be sufficiently small so that
$C T C_0^p \varepsilon^{p-1} \le \frac{C_0}{2}$,
we conclude
$\| \mathcal{M}v \|_{X(T)} \le C_0 \varepsilon$.
Thus, $\mathcal{M}$ maps $X_{\varepsilon}(T)$ to itself.

In a similar way, we have
\[
	\| \mathcal{M}(v) - \mathcal{M}(w) \|_{X(T)}
	\le C \varepsilon^{p-1} T \| v - w \|_{X(T)}
\]
for $v, w \in X_{\varepsilon}(T)$
and hence, taking $T$ further small, we have
\[
	\| \mathcal{M}(v) - \mathcal{M}(w) \|_{X(T)}
	\le \frac{1}{2} \| v - w \|_{X(T)}.
\]
Therefore, $\mathcal{M}$ is a contraction mapping
on $X_{\varepsilon}(T)$ and
there is a unique fixed point $u$ in $X_{\varepsilon}(T)$.
By the definition of $\mathcal{M}$,
$u$ is a solution of the Cauchy problem \eqref{nldw}.

Finally, we prove the continuity with respect to $t$ of the solution $u$.
For $t_1, t_2 \ge 0$, we have
\begin{align*}
	u(t_1) - u(t_2) &= (\mathcal{D}(t_1) - \mathcal{D}(t_2) )u_1
		+ (\tilde{\mathcal{D}}(t_1) -\tilde{\mathcal{D}}(t_2)) u_0 \\
		&\quad + \int_{t_2}^{t_1} \mathcal{D}(t_1-\tau) \mathcal{N}(u(\tau))\, d\tau
				+ \int_0^{t_2} (\mathcal{D}(t_1-\tau) - \mathcal{D}(t_2-\tau) )
				\mathcal{N}(u(\tau))\, d\tau.
\end{align*}
Applying \eqref{con1}, \eqref{con2} and the Lebesgue convergence theorem
with the bound \eqref{est_duh1}, we can easily prove
$\lim_{t_1\to t_2} \| u(t_1) - u(t_2) \|_{H^{s,0}\cap H^{0,\alpha}}=0$,
which finishes the proof.
\end{proof}

%%%%%%%%%%%%%%%%%%%%%%%%%%%%%%%%%%%%%%%%%%%%%%%%%
\begin{lemma}\label{lem_uni}
Under the assumptions in Theorem \ref{thm_lwp},
the mild solution of \eqref{nldw} is unique in the class
$C([0,T); H^{s,0}(\mathbb{R}^n) \cap H^{0,\alpha}(\mathbb{R}^n))$.
\end{lemma}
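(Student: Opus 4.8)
The plan is to prove unconditional uniqueness by a Gronwall argument applied to the Duhamel difference equation, thereby upgrading the in-ball uniqueness already supplied by the contraction mapping in Theorem \ref{thm_lwp} to the full continuity class. Let $u,v\in C([0,T);H^{s,0}\cap H^{0,\alpha})$ be two mild solutions with the same initial data and set $w=u-v$. Subtracting the two integral equations cancels the linear part $\tilde{\mathcal{D}}(t)\varepsilon u_0+\mathcal{D}(t)\varepsilon u_1$, so that $w(0)=0$ and
\[
	w(t)=\int_0^t \mathcal{D}(t-\tau)\big(\mathcal{N}(u(\tau))-\mathcal{N}(v(\tau))\big)\,d\tau.
\]
The essential observation is that, although an arbitrary solution in the continuity class need not lie in the small ball $X_\varepsilon(T)$, on any compact time interval its $X$-norm is merely \emph{finite} rather than small: the time weights $\langle t\rangle^{\cdots}$ appearing in \eqref{xnorm} are bounded above and below on a compact interval, so $\|u\|_{X(T')},\|v\|_{X(T')}\le M<\infty$ for every $T'<T$. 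Since a Gronwall estimate tolerates an arbitrarily large constant, this finiteness is all that is needed.

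First I would treat the initial interval $[0,T_0]$ with any $T_0\in(0,\min\{T,1\})$. Combining the Duhamel bound \eqref{est_duh1}, which is valid for $T\le 1$, with the Lipschitz nonlinear estimate of Lemma \ref{lem_nl} gives
\[
	\|w\|_{X(t)}\lesssim \int_0^t \|\mathcal{N}(u)-\mathcal{N}(v)\|_{Y(\tau)}\,d\tau
	\lesssim M^{p-1}\int_0^t \|w\|_{X(\tau)}\,d\tau,\qquad 0<t\le T_0.
\]
Because $t\mapsto\|w\|_{X(t)}$ is nonnegative, nondecreasing and bounded by $2M$ on $[0,T_0]$, Gronwall's inequality forces $\|w\|_{X(T_0)}=0$, hence $u=v$ on $[0,T_0]$.

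To reach all of $[0,T)$ I would run a standard connectedness argument. Set $t_*=\sup\{t\in[0,T):w\equiv 0 \text{ on }[0,t]\}$; the previous step gives $t_*>0$, and continuity gives $u(t_*)=v(t_*)$ whenever $t_*<T$. Since $\mathcal{N}(u)=\mathcal{N}(v)$ on $[0,t_*]$, the difference equation reduces for $t\ge t_*$ to $w(t)=\int_{t_*}^t \mathcal{D}(t-\tau)(\mathcal{N}(u)-\mathcal{N}(v))\,d\tau$. On a short interval $[t_*,t_*+\delta]$ with $\delta\le 1$ I would repeat the Gronwall estimate using the time-translated analogue of \eqref{est_duh1}; its proof is entirely local in time, and on the compact interval $[t_*,t_*+\delta]$ the weights $\langle t\rangle^{\cdots}$ in the $X$- and $Y$-norms differ from their values at $t_*$ only by a bounded factor and so can be absorbed into constants. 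This yields $w\equiv 0$ on $[t_*,t_*+\delta]$, contradicting the maximality of $t_*$ unless $t_*=T$; therefore $u\equiv v$ on $[0,T)$.

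The main obstacle is conceptual rather than computational: the contraction mapping only certifies uniqueness inside $X_\varepsilon(T)$, so the work lies in recognizing that the finiteness (not smallness) of the $X$-norm of an arbitrary continuous solution suffices to close a Gronwall loop, and in handling the restriction $T\le 1$ in the clean Duhamel estimate \eqref{est_duh1} by localizing and translating in time. The nonlinear Lipschitz bound of Lemma \ref{lem_nl} and the linear estimates of Lemma \ref{lem_lin} are used exactly as in the existence proof, so no new spatial estimates are required.
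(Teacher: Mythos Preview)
Your proposal is correct and follows essentially the same route as the paper: bound the $X$-norm of two arbitrary solutions on a compact subinterval, feed the Duhamel estimate \eqref{est_duh1} and the Lipschitz bound of Lemma~\ref{lem_nl} into Gronwall to obtain uniqueness on a short interval, and then continue. The paper phrases the continuation as restarting from $(u(T_1),u_t(T_1))$ while you use a connectedness/supremum argument, but these are equivalent formulations of the same idea.
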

%%%%%%%%%%%%%%%%%%%%%%%%%%%%%%%%%%%%%%%%%%%%%%%%%
\begin{proof}
Let $T_0>0$ and
let
$u, v \in C([0,T_0); H^{s,0}(\mathbb{R}^n) \cap H^{0,\alpha}(\mathbb{R}^n))$
are mild solutions of \eqref{nldw} with the same initial data
$(u_0, u_1)$.
We take arbitrary $0< T_1 < \min\{ 1, T_0 \}$ and fix it.
Then, we have
$\| u \|_{X(T_1)} + \| v \|_{X(T_1)} \le M$
with some $M > 0$.
By the first assertion of Lemma \ref{lem_duh}
and applying Lemma \ref{lem_nl},
we see that for $T\in [0,T_1]$
\begin{align*}
	\| u - v \|_{X(T)}
%	&= \left\|
%			\int_0^t \mathcal{D}(t-\tau)(\mathcal{N}(u) - \mathcal{N}(v)) d\tau
%			\right\|_{X(T)}\\
%	&\lesssim \int_0^T \| \mathcal{N}(u) - \mathcal{N}(v) \|_{X(\tau)} d\tau \\
%	&\lesssim \int_0^T \| u - v \|_{X(\tau)} ( \| u \|_{X(\tau)} + \| v\|_{X(\tau)} )^{p-1} d\tau \\
	&\lesssim M^{p-1} \int_0^{T} \| u - v \|_{X(\tau)} d\tau.
\end{align*}
Hence, we apply the Gronwall inequality and obtain
$\| u - v \|_{X(T)} \equiv 0$ for $T\in [0,T_1]$,
namely
$u \equiv v$ in $t\in [0,T_1]$.
Applying the same argument starting at
$(u(T_1), u_t(T_1))$
instead of
$(u_0, u_1)$,
we have
$u\equiv v$ on $[0, 2T_1]$.
Continuing this until reaching $T_0$,
we have the uniqueness in 
$C([0,T_0); H^{s,0}(\mathbb{R}^n) \cap H^{0,\alpha}(\mathbb{R}^n))$.
\end{proof}

Finally, we mention about the continuity of the solution with respect to
the initial data.
%%%%%%%%%%%%%%%%%%%%%%%%%%%%%%%%%%%%%%%%%%%%%
\begin{lemma}[Lipschitz continuity of the solution map]\label{lem_lip}
The solution map
\[
	\left( H^{s,0}\cap H^{0,\alpha} \right)
	\times \left( H^{s-1,0}\cap H^{0,\alpha} \right)
	\to C([0,T); H^{s,0}\cap H^{0,\alpha});\quad
	(u_0,u_1) \mapsto u
\]
is locally Lipschitz continuous, that is, for any $T_1 < T$, we have
\[
	\| u(t) - v(t) \|_{H^{s,0}\cap H^{0,\alpha}}
	\lesssim \| u_0 - v_0 \|_{H^{s,0}\cap H^{0,\alpha}} + \| u_1 - v_1 \|_{H^{s-1,0}\cap H^{0,\alpha}}
\]
on $t \in [0,T_1]$, where
$u$ and  $v$ are solutions of \eqref{nldw}
in $C([0,T); H^{s,0}\cap H^{0,\alpha})$
with the initial data
$(u_0, u_1)$ and $(v_0, v_1)$, respectively.
\end{lemma}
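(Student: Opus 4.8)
The plan is to estimate the difference $u-v$ directly on the fixed bounded interval $[0,T_1]$ by a single Gronwall argument, rather than by the subinterval ``restart'' used for uniqueness in Lemma \ref{lem_uni}. Writing both solutions through the integral equation and subtracting, we get
\begin{align*}
	(u-v)(t)
	= \tilde{\mathcal{D}}(t)(u_0-v_0)
	+ \mathcal{D}(t)(u_1-v_1)
	+ \int_0^t \mathcal{D}(t-\tau)\bigl( \mathcal{N}(u(\tau)) - \mathcal{N}(v(\tau)) \bigr)\,d\tau .
\end{align*}
Since $u,v \in C([0,T);H^{s,0}\cap H^{0,\alpha})$, the quantity $M := \sup_{0\le t\le T_1}\bigl( \|u(t)\|_{H^{s,0}\cap H^{0,\alpha}} + \|v(t)\|_{H^{s,0}\cap H^{0,\alpha}} \bigr)$ is finite, and I set $g(t) := \|(u-v)(t)\|_{H^{s,0}\cap H^{0,\alpha}}$.

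First I would observe that on the compact interval $[0,T_1]$ every time factor $\langle t-\tau\rangle^{\theta}$ occurring in Lemmas \ref{lem_lin} and \ref{lem_lin2} is bounded by a constant depending only on $T_1$ and $\alpha$; in particular I do not need the restriction $T\le 1$ of \eqref{est_duh1}. Applying \eqref{est_der}, \eqref{est_wei}, \eqref{est_der2}, and \eqref{est_wei2} to the two homogeneous terms (exactly as in the proof of Theorem \ref{thm_lwp}) bounds them on $[0,T_1]$ by $C_{T_1}\bigl( \|u_0-v_0\|_{H^{s,0}\cap H^{0,\alpha}} + \|u_1-v_1\|_{H^{s-1,0}\cap H^{0,\alpha}} \bigr)$. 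For the Duhamel term I apply \eqref{est_der} and \eqref{est_wei} under the time integral, with the exponents $\rho,q,\sigma_1,\sigma_2$ of Table 2 chosen as in the proof of Lemma \ref{lem_duh}; because the time factors are now harmless constants, this yields the pointwise bound $C_{T_1}\int_0^t \|(\mathcal{N}(u)-\mathcal{N}(v))(\tau)\|_{\widetilde Y}\,d\tau$, where $\|\cdot\|_{\widetilde Y}$ denotes the spatial norms appearing in \eqref{ynorm} stripped of their time weights.

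The remaining ingredient is the fixed-time Lipschitz bound
\begin{align*}
	\|(\mathcal{N}(u)-\mathcal{N}(v))(\tau)\|_{\widetilde Y}
	\lesssim M^{p-1}\,\|(u-v)(\tau)\|_{H^{s,0}\cap H^{0,\alpha}}
	= M^{p-1} g(\tau).
\end{align*}
This is obtained by rerunning the computation of Lemma \ref{lem_nl} verbatim, the only change being that the interpolation inequalities of Lemma \ref{lem_ip} are replaced by the time-independent estimates underlying them: the Sobolev embedding of $H^{s,0}$ into the admissible Lebesgue spaces together with the weighted interpolation \eqref{eq_ip3} shows that $H^{s,0}\cap H^{0,\alpha}$ controls every norm of $u,v$ entering the Fa\`{a} di Bruno and H\"older splittings (note $\|\phi\|_{H^{s,0}\cap H^{0,\alpha}} \sim \|\phi\|_{L^2}+\||\nabla|^s\phi\|_{L^2}+\||\cdot|^\alpha\phi\|_{L^2}$, the fixed-time version of the $X(T)$-norm). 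Combining the three estimates gives
\begin{align*}
	g(t) \le C_{T_1}\bigl( \|u_0-v_0\|_{H^{s,0}\cap H^{0,\alpha}} + \|u_1-v_1\|_{H^{s-1,0}\cap H^{0,\alpha}} \bigr)
	+ C_{T_1}M^{p-1}\int_0^t g(\tau)\,d\tau ,
\end{align*}
and Gronwall's inequality closes the argument on $[0,T_1]$.

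The main obstacle is precisely the passage from $T\le 1$ to an arbitrary $T_1<T$. Imitating Lemma \ref{lem_uni} by splitting $[0,T_1]$ into subintervals of length $<1$ and restarting at each node would force one to control $\|u_t-v_t\|_{H^{s-1,0}\cap H^{0,\alpha}}$ at the nodes, hence to develop estimates for $\partial_t\mathcal{D}$ and $\partial_t\tilde{\mathcal{D}}$ that are not among the stated lemmas; for the \emph{quantitative} Lipschitz bound this restart is genuinely awkward, in contrast to the uniqueness proof where $u\equiv v$ on the first subinterval makes the restart data identical. The single-shot Gronwall above sidesteps this entirely, at the modest cost of a $T_1$-dependent constant, by exploiting that on a fixed compact time interval the decay factors of Lemmas \ref{lem_lin} and \ref{lem_lin2} are irrelevant; the only genuinely nonlinear input is the fixed-time estimate for $\mathcal{N}$, which is already contained in the proof of Lemma \ref{lem_nl}.
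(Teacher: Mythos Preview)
Your proof is correct and follows essentially the same strategy as the paper's: bound $u,v$ uniformly on $[0,T_1]$, estimate the linear terms by the initial data differences via Lemmas \ref{lem_lin}--\ref{lem_lin2}, control the Duhamel term by an integral of the nonlinear difference using the estimates behind Lemma \ref{lem_nl}, and close with Gronwall. The only cosmetic difference is that the paper works in the weighted $X(T_1)$ norm and cites Lemma \ref{lem_duh} directly (tacitly ignoring the $T\le 1$ restriction of \eqref{est_duh1}), whereas you strip the time weights and go back to Lemma \ref{lem_lin} to make the $T_1$-dependent constant explicit; your discussion of why the subinterval restart of Lemma \ref{lem_uni} is unnecessary here is a correct observation that the paper leaves implicit.
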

%%%%%%%%%%%%%%%%%%%%%%%%%%%%%%%%%%%%%%%%%%%%%%
\begin{proof}
Let $u$ and  $v$ are solutions of \eqref{nldw}
in $C([0,T); H^{s,0}\cap H^{0,\alpha})$ and fix $T_1 < T$.
Then, we have
$\| u \|_{X(T_1)} + \| v \|_{X(T_1)} \le M$
with some constant $M>0$.
Therefore, by Lemmas \ref{lem_lin}, \ref{lem_duh} and \ref{lem_nl}, we have
\begin{align*}
	\| u(t) - v(t) \|_{X(T_1)}
	&\lesssim \| u_0 - v_0 \|_{H^{s,0}\cap H^{0,\alpha}}
		+ \| u_1 - v_1 \|_{H^{s-1,0}\cap H^{0,\alpha}} \\
	&\quad + M^{p-1} \int_0^{T_1} \| u(\tau) - v(\tau) \|_{X(\tau)}\, d\tau.
\end{align*}
The Gronwall inequality implies
\[
	\| u(t) - v(t) \|_{X(T_1)} \lesssim
	\| u_0 - v_0 \|_{H^{s,0}\cap H^{0,\alpha}}
		+ \| u_1 - v_1 \|_{H^{s-1,0}\cap H^{0,\alpha}},
\]
which completes the proof.
\end{proof}

%%%%%%%%%%%%%%%%%%%%%%%%%%%%%%%%%%%%%%%%%%%%
\begin{proof}[Proof of Theorem \ref{thm_gwp}]
We assume that
$p>1+\frac{2r}{n}$, $r\in [1,2]$
or
$p=1+\frac{2r}{n}$, $r\in (1,2]$
and consider the mapping
$\mathcal{M}$ defined on \eqref{map} in the complete metric space
\[
	X_{\varepsilon}(\infty)
	= \left\{
		v \in %L^{\infty}(0,\infty; L^2(\mathbb{R}^n))
		L^{\infty}(0,\infty; H^{s,0}\cap H^{0,\alpha} (\mathbb{R}^n) )
		; \| v \|_{X(\infty)} \le C_0 \varepsilon
		\right\}
\]
with the metric \eqref{met}.
Then, by Lemma \ref{lem_duh}, we have
\[
	\left\| \int_0^t \mathcal{D}(t-\tau) \mathcal{N}(u(\tau)) d\tau \right\|_{X(\infty)}
	\lesssim \| \mathcal{N}(u) \|_{Y(\infty)}.
\]
Hence, Lemma  \ref{lem_nl} implies
\begin{align*}
	\| \mathcal{M}v \|_{X(\infty)}
	\le C \varepsilon
		( \| u_0 \|_{H^{s,0}\cap H^{0,\alpha}} + \| u_1 \|_{H^{s-1,0}\cap H^{0,\alpha}} )
	+ C \| v \|_{X(\infty)}^p.
\end{align*}
As before, taking a constant
$C_0$
so that
$C ( \| u_0 \|_{H^{s,0}\cap H^{0,\alpha}} + \| u_1 \|_{H^{s-1,0}\cap H^{0,\alpha}} ) \le \frac{C_0}{2}$,
we see that
\[
	\| \mathcal{M}v \|_{X(\infty)}
	\le \frac{C_0}{2}\varepsilon + C\varepsilon^p.
\]
Finally, taking $\varepsilon$ sufficiently small, we conclude
$\| \mathcal{M}v \|_{X(\infty)} \le C_0 \varepsilon$
and hence, $\mathcal{M}$ maps $X_{\varepsilon}(\infty)$ to itself.

In a similar way, we have
\[
	\| \mathcal{M}(v) - \mathcal{M}(w) \|_{X(\infty)}
	\le C \varepsilon^{p-1} \| v - w \|_{X(\infty)}
\]
for $v, w \in X_{\varepsilon}(\infty)$
and hence, taking $\varepsilon$ further small,
we conclude that $\mathcal{M}$ is a contraction mapping.
Therefore, $\mathcal{M}$ has a unique fixed point
$u$ in $X_{\varepsilon}(\infty)$
and by the definition of $\mathcal{M}$,
$u$ is a mild solution of \eqref{nldw}.
In the same way as in the proof of Theorem \ref{thm_lwp},
we deduce that
$u$ belongs to
$C([0,\infty); H^{s,0}(\mathbb{R}^n) \cap H^{0,\alpha}(\mathbb{R}^n))$.
The uniqueness and continuity of the solution have already proved in 
Lemmas \ref{lem_lip} and  \ref{lem_uni}.
\end{proof}
%%%%%%%%%%%%%%%%%%%%%%%%%%%%%%%%%%%%%%%%%%%%

%%%%%%%%%%%%%%%%%%%%%%%%%%%%%%%%%%%%%%%%%%%%%%%%%
%%%%%%%%%%%%%%%%%%%%%%%%%%%%%%%%%%%%%%%%%%%%%%%%%
\subsection{Proof of Theorems \ref{thm_lwp}\ and \ref{thm_gwp}\ 
in the one-dimensional case}
%%%%%%%%%%%%%%%%%%%%%%%%%%%%%%%%%%%%%%%%%%%%%%%%%
\begin{lemma}\label{lem_duh1}
Under the assumption in Theorem \ref{thm_lwp}, we have
\begin{align*}
	\left\| \int_0^t \mathcal{D}(t-\tau) \psi (\tau) d\tau \right\|_{X(T)}
	\lesssim \int_0^T \| \psi \|_{Y(\tau)} d\tau
\end{align*}
for $0<T\le 1$.
Moreover, we have
\begin{align*}
	\left\| \int_0^t \mathcal{D}(t-\tau) \psi (\tau) d\tau \right\|_{X(T)}
	\lesssim
	\| \psi \|_{Y(T)}
	\begin{cases}
		1
			&\mbox{if}\quad p>1+2r,\\
		\log \left( 2+T \right)
			&\mbox{if}\quad p = 1+2r,\\
		\langle T \rangle^{1-\frac{1}{2r}(p-1)}
			&\mbox{if}\quad p< 1+ 2r
	\end{cases}
\end{align*}
for $r=1$, $0<T<\infty$, and
\begin{align*}
	\left\| \int_0^t \mathcal{D}(t-\tau) \psi (\tau) d\tau \right\|_{X(T)}
	\lesssim
	\| \psi \|_{Y(T)}
	\begin{cases}
		1
			&\mbox{if}\quad p \ge 1+ 2r,\\
		\langle T \rangle^{1-\frac{1}{2r}(p-1)}
			&\mbox{if}\quad p< 1+ 2r
	\end{cases}
\end{align*}
for $r \in (1,2]$, $0<T<\infty$.
\end{lemma}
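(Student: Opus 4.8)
The plan is to repeat, step for step, the three-part argument that established Lemma~\ref{lem_duh}, now reading off every parameter from the $n=1$ column of Table~2, namely $q=1$, $\rho=2$, $\sigma_1=1$, $\sigma_2=2$, and $\eta=\frac{1}{2}(\frac{p}{r}-\frac{1}{2})$. A useful preliminary simplification is that the standing hypothesis $0\le s<1$ forces $[s]=0$, so that the first summand $\langle t\rangle^{\eta}\||\nabla|^{[s]}\psi\|_{L^{\rho}}$ of the $Y(T)$-norm in \eqref{ynorm} is simply $\langle t\rangle^{\eta}\|\psi\|_{L^2}$; no Leibniz-rule or Fa\`a di Bruno decomposition is needed here (that machinery belongs to the nonlinear estimate, not to this Duhamel bound). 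Writing $\Psi(t):=\int_0^t\mathcal{D}(t-\tau)\psi(\tau)\,d\tau$, I would bound in turn the three quantities $\||\nabla|^s\Psi\|_{L^2}$, $\|\Psi\|_{L^2}$, $\||\cdot|^{\alpha}\Psi\|_{L^2}$ that constitute $\|\Psi\|_{X(T)}$ in \eqref{xnorm}.

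In each of the three steps I would split $\int_0^t=\int_0^{t/2}+\int_{t/2}^t$ and invoke Lemma~\ref{lem_lin}. On the near-diagonal piece $\int_{t/2}^t$ the natural choices are $\gamma=\rho=2$ for the $|\nabla|^s$-estimate and $\nu=q=1$ for the $L^2$- and weighted estimates; I would then bound $\|\psi(\tau)\|_{L^2}$ by $\langle\tau\rangle^{-\eta}\|\psi\|_{Y(T)}$ and $\|\psi(\tau)\|_{L^1}$, $\|\langle\cdot\rangle^{\alpha}\psi(\tau)\|_{L^1}$ by the corresponding $Y(T)$-weights, absorbing the exponentially decaying remainders of Lemma~\ref{lem_lin} through the Sobolev embeddings exactly as in the higher-dimensional proof. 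On the far piece $\int_0^{t/2}$ I would again apply Lemma~\ref{lem_lin}, pick $\gamma\in[\sigma_1,\sigma_2]=[1,2]$, and control $\|\psi(\tau)\|_{L^{\gamma}}$ by $\langle\tau\rangle^{-\frac{1}{2}(\frac{p}{r}-\frac{1}{\gamma})}\|\psi\|_{Y(T)}$. The decay rate and the threshold then drop out of the elementary time-integral estimate for $\int\langle t-\tau\rangle^{-a}\langle\tau\rangle^{-b}\,d\tau$, organised into the four sub-cases $p<1+2r$, $p>1+2r$, $p=1+2r$ with $r>1$, and $p=1+2r$ with $r=1$; I would take $\gamma=r$ in the first, second, and last and $\gamma=\sigma_1=1$ in the third, precisely as in the cases labelled in Lemma~\ref{lem_duh}. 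The critical exponent $1+2r$ is the $n=1$ instance of $1+\frac{2r}{n}$, and it emerges from the borderline $\frac{1}{2r}(p-1)=1$ of the relevant integral; the split between $r=1$ and $r>1$ at criticality reflects whether the choice $\gamma=\sigma_1=1$ produces a time exponent $-\frac{1}{2}(\frac{p}{r}-1)$ strictly larger than $-1$, which holds exactly when $r>1$ and is what removes the logarithmic factor there. The short-time bound for $0<T\le1$, like \eqref{est_duh1}, follows by inspecting the very same computation and keeping the integral over $[0,T]$, so I would only state it.

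The one place that repays genuine care — and what I expect to be the main (if modest) obstacle — is the exponent bookkeeping forced by the choice $q=1$. In one dimension Lemma~\ref{lem_lin} delivers only the kernel decay $\langle t-\tau\rangle^{-\frac{n}{2}(\frac{1}{q}-\frac{1}{2})}=\langle t-\tau\rangle^{-1/4}$, rather than the $\langle t-\tau\rangle^{-1/2}$ available when $n\ge2$. I would check that this slower diagonal decay is compensated exactly by the correspondingly slower $\langle\tau\rangle$-decay of the $L^1$-based quantities in $\|\psi\|_{Y(T)}$, so that the combined time exponent still collapses to the same net rate $-\frac{n}{2}(\frac{1}{r}-\frac{1}{2})-\frac{n}{2r}(p-1)+\cdots$ as in Lemma~\ref{lem_duh} and reproduces the stated decay and the index $1+2r$ verbatim. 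Since $1/4<1$, every near-diagonal integral remains convergent, so no new integrability difficulty appears and the argument closes just as in the higher-dimensional case.
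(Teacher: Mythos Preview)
Your proposal is correct and follows the paper's own approach: the paper, too, presents this lemma as a repetition of Lemma~\ref{lem_duh} with the $n=1$ parameters $q=1$, $\rho=2$, $\sigma_1=1$, $\sigma_2=2$, singling out as the only substantive changes the Sobolev replacements $\||\nabla|^s\langle\nabla\rangle^{-1}\psi\|_{L^2}\lesssim\|\psi\|_{L^2}$ (valid since $s<1$) and $\|\langle\cdot\rangle^{\alpha}\langle\nabla\rangle^{-1}\psi\|_{L^2}\lesssim\|\langle\cdot\rangle^{\alpha}\psi\|_{L^1}$ (Lemma~\ref{lem_sob} with $q=1$), which are exactly the $n=1$ instances of the embeddings you refer to. The only cosmetic difference is that on the near-diagonal piece of the $|\nabla|^s$-estimate the paper takes $\gamma=1$ rather than your $\gamma=\rho=2$, but both choices produce the same net time exponent and the argument closes identically.
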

%%%%%%%%%%%%%%%%%%%%%%%%%%%%%%%%%%%%%%%%%%%%%%%%%
\begin{proof}
The proof is almost the same as that of Lemma \ref{lem_duh} and hence
we present only the outline.
The main difference arises in the Sobolev inequality, that is, we shall use
\[
	\| |\nabla|^s \langle \nabla \rangle^{-1} \psi \|_{L^2}
	\lesssim \| \psi \|_{L^2}
\]
instead of \eqref{sobs}, and we use Lemma \ref{lem_sob} with $q=1$, that is,
\[
	\| \langle \cdot \rangle^{\alpha} \langle \nabla \rangle^{-1} \psi \|_{L^2}
	\lesssim \| \langle \cdot \rangle^{\alpha} \psi \|_{L^1}.
\]
We estimate
\begin{align*}
	&\left\| |\nabla|^s \int_0^t \mathcal{D}(t-\tau) \psi(\tau) \,d\tau \right\|_{L^2} \\
	&\quad \lesssim
		\int_0^{\frac{t}{2}} \| |\nabla|^s \mathcal{D}(t-\tau)\psi(\tau)\|_{L^2}\,d\tau
		+ \int_{\frac{t}{2}}^t \| |\nabla|^s \mathcal{D}(t-\tau)\psi(\tau)\|_{L^2}\,d\tau \\
	&\quad =: I + I\!I
\end{align*}
and Lemma \ref{lem_lin} with $s_1=s, s_2=0$, $\gamma =1$ implies
\begin{align*}
	I\!I &\lesssim
		\int_{\frac{t}{2}}^t \langle t- \tau \rangle^{-\frac{1}{4}-\frac{s}{2}}
			\|\psi(\tau) \|_{L^1}\,d\tau
		+ \int_{\frac{t}{2}}^t e^{-\frac{t-\tau}{4}} \| \psi (\tau) \|_{L^2}\, d\tau \\
	&\lesssim
		\| \psi \|_{Y(T)} \langle t \rangle^{%
			-\frac{1}{2}\left( \frac{1}{r}-\frac{1}{2}\right)-\frac{s}{2}%
			-\frac{1}{2r}(p-1) + 1}.%
\end{align*}
Similarly, we have
\begin{align*}
	I&\lesssim
	\int_0^{\frac{t}{2}}
		\langle t-\tau \rangle^{-\frac{1}{2}\left(\frac{1}{\gamma}-\frac{1}{2}\right)-\frac{s}{2}}
		\| \psi (\tau) \|_{L^{\gamma}}\,d\tau
		+ \int_0^{\frac{t}{2}} e^{-\frac{t-\tau}{4}}
			\| \psi (\tau) \|_{L^2}\,d\tau \\
	&\lesssim
	\| \psi \|_{Y(T)}
	\langle t \rangle^{-\frac{1}{2}\left( \frac{1}{r}-\frac{1}{2}\right) - \frac{s}{2}}
	\begin{cases}
		1
			&\mbox{if}\quad p>1+2r\ \mbox{or}\ p=1+2r, r>1,\\
		\log \left( 2+t \right)
			&\mbox{if}\quad p = 1+2r, r=1,\\
		\langle t \rangle^{1-\frac{1}{2r}(p-1)}
			&\mbox{if}\quad p< 1+ 2r,
	\end{cases}
\end{align*}
where
$\gamma = 1$ if $p=1+2r, r>1$ and $\gamma= r$ otherwise.
In the same way, we have
\begin{align*}
	&\left\| \int_0^t \mathcal{D}(t-\tau) \psi(\tau) \,d\tau \right\|_{L^2} \\
	&\quad \lesssim
	\| \psi \|_{Y(T)}
	\langle t \rangle^{-\frac{1}{2}\left( \frac{1}{r}-\frac{1}{2}\right) }
	\begin{cases}
		1
			&\mbox{if}\quad p>1+2r\ \mbox{or}\ p=1+2r, r>1,\\
		\log \left( 2+t \right)
			&\mbox{if}\quad p = 1+2r, r=1,\\
		\langle t \rangle^{1-\frac{1}{2r}(p-1)}
			&\mbox{if}\quad p< 1+ 2r.
	\end{cases}
\end{align*}
Finally, we estimate
\begin{align*}
	& \left\| |\cdot|^{\alpha} \int_0^t \mathcal{D}(t-\tau) \psi (\tau)\,d\tau \right\|_{L^2} \\
	&\quad \lesssim
	\int_0^t \langle t-\tau \rangle^{%
		-\frac{n}{2}\left( \frac{1}{\gamma}-\frac{1}{2} \right) + \frac{\alpha}{2} }%
		\| \psi (\tau) \|_{L^{\gamma}}\, d\tau
		+ \int_0^t \langle t-\tau \rangle^{-\frac{1}{4} }%
		\| |\cdot |^{\alpha} \psi(\tau) \|_{L^1}\,d\tau \\
	&\qquad + \int_0^t e^{-\frac{t-\tau}{4}}
		\| \langle \cdot \rangle^{\alpha} \langle \nabla \rangle^{-1} \psi (\tau) \|_{L^2}\,d\tau \\
	&\quad =: V_1 + V_2 + V_3.
\end{align*}
Lemma \ref{lem_sob} leads to
\[	
	V_3 \lesssim \int_0^t e^{-\frac{t-\tau}{4}}
		\| \langle \cdot \rangle^{\alpha} \psi (\tau) \|_{L^q}\,d\tau.
\]
Therefore, the estimates of $V_2, V_3$ reduce to that of
$ \int_0^t \langle t-\tau \rangle^{-\frac{1}{4}}
\| \langle \cdot \rangle^{\alpha} \psi(\tau) \|_{L^1}\,d\tau$.
We have
\begin{align*}
	\int_0^t \langle t-\tau \rangle^{-\frac{1}{4}}
			\| \langle \cdot \rangle^{\alpha} \psi(\tau) \|_{L^q}\,d\tau
	 \lesssim \| \psi \|_{Y(T)}
		\int_0^t \langle t-\tau \rangle^{-\frac{1}{4}}
		\langle \tau \rangle^{-\zeta}\, d\tau,
\end{align*}
where $\zeta$ is defined in Table 2 (see Section \ref{se_no}).
We compute
\[
	\int_0^t \langle t-\tau \rangle^{%
		-\frac{n}{2}\left( \frac{1}{q} - \frac{1}{2} \right)}%
		\langle \tau \rangle^{-\zeta}\, d\tau
	\lesssim
	\begin{cases}
	\langle t \rangle^{%
		-\frac{1}{2}\left( \frac{1}{r}-\frac{1}{2} \right)+\frac{\alpha}{2}%
		+1 -\frac{n}{2r}(p-1)}%
	&(\zeta < 1),\\
	\langle t \rangle^{-\frac{1}{4}} \log (1+t)
	&(\zeta =1),\\
	\langle t \rangle^{-\frac{1}{4}}
	&(\zeta >1)
	\end{cases}
\]	
and note that
$\zeta <1$ holds if $p < 1+2r$.
Therefore, we may summarize them as
\[
	\int_0^t \langle t-\tau \rangle^{%
		-\frac{n}{2}\left( \frac{1}{q} - \frac{1}{2} \right)}%
		\langle \tau \rangle^{-\zeta}\, d\tau
	\lesssim
	\begin{cases}
	\langle t \rangle^{%
		-\frac{n}{2}\left( \frac{1}{r}-\frac{1}{2} \right)+\frac{\alpha}{2}%
		+1 -\frac{n}{2r}(p-1)}%
	&(p < 1+\frac{2r}{n}),\\
	\langle t \rangle^{%
		-\frac{n}{2}\left( \frac{1}{r}-\frac{1}{2} \right)+\frac{\alpha}{2}}%
	&(p\ge 1+\frac{2r}{n})
	\end{cases}
\]
and hence, we have
\[
	V_2 + V_3 \lesssim
	\| \psi \|_{Y(T)}
	\begin{cases}
	\langle t \rangle^{%
		-\frac{n}{2}\left( \frac{1}{r}-\frac{1}{2} \right)+\frac{\alpha}{2}%
		+1 -\frac{n}{2r}(p-1)}%
	&(p < 1+\frac{2r}{n}),\\
	\langle t \rangle^{%
		-\frac{n}{2}\left( \frac{1}{r}-\frac{1}{2} \right)+\frac{\alpha}{2}}%
	&(p\ge 1+\frac{2r}{n}).
	\end{cases}
\]
For $V_1$, taking
$\gamma = 1$ if $p=1+2r, r>1$ and  $\gamma= r$ otherwise,
we see that
\begin{align*}
	V_1 \lesssim
	\| \psi \|_{Y(T)}
	\langle t \rangle^{-\frac{1}{2}\left( \frac{1}{r}-\frac{1}{2}\right) +\frac{\alpha}{2} }
	\begin{cases}
		1
			&\mbox{if}\quad p>1+2r\ \mbox{or}\ p=1+2r, r>1,\\
		\log \left( 2+t \right)
			&\mbox{if}\quad p = 1+2r, r=1,\\
		\langle t \rangle^{1-\frac{1}{2r}(p-1)}
			&\mbox{if}\quad p< 1+ 2r.
	\end{cases}
\end{align*}
This completes the proof.
\end{proof}

%%%%%%%%%%%%%%%%%%%%%%%%%%%%%%%%%%%%%%%%%%%%%%%%%
\begin{lemma}\label{lem_nl1}
Under the assumptions in Theorem \ref{thm_lwp}, we have
\begin{align*}
	\| \mathcal{N}(u) \|_{Y(T)} & \lesssim \| u \|_{X(T)}^p,\\
	\| \mathcal{N}(u) - \mathcal{N}(v) \|_{Y(T)}
		&\lesssim \| u-v \|_{X(T)} \left(  \| u \|_{X(T)} + \| v \|_{X(T)} \right)^{p-1}.
\end{align*}
\end{lemma}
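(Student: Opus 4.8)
The plan is to repeat the argument of Lemma \ref{lem_nl} in the same order, exploiting the fact that the one-dimensional setting is genuinely simpler. The standing hypothesis $0\le s<1$ in Theorem \ref{thm_lwp} forces $[s]=0$, and the parameters of Table 2 specialize to $\rho=2$, $q=1$, $\sigma_1=1$, $\sigma_2=2$. In particular no derivative ever falls on $\mathcal{N}(u)$, so the Fa\`a di Bruno expansion of the higher-dimensional proof is not needed. Moreover, since $[s]=0$ and $\rho=2$, the first term of the $Y(T)$-norm equals $\langle t\rangle^{\eta}\|\psi\|_{L^2}$ with $\eta=\tfrac12(\tfrac{p}{r}-\tfrac12)$, which is exactly the $\gamma=2$ endpoint of the second term $\sup_{\gamma\in[1,2]}\langle t\rangle^{\frac12(\frac{p}{r}-\frac1\gamma)}\|\psi\|_{L^{\gamma}}$. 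Hence it suffices to control the two quantities $\sup_{\gamma\in[1,2]}\langle t\rangle^{\frac12(\frac{p}{r}-\frac1\gamma)}\|\mathcal{N}(u)\|_{L^{\gamma}}$ and $\langle t\rangle^{\zeta}\|\langle\cdot\rangle^{\alpha}\mathcal{N}(u)\|_{L^1}$.

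For the first quantity I would invoke the pointwise bound $|\mathcal{N}(u)|\lesssim|u|^{p}$ from \eqref{nonlin} with $l=0$, giving $\|\mathcal{N}(u)\|_{L^{\gamma}}\lesssim\|u\|_{L^{p\gamma}}^{p}$, so that
\[
	\langle t\rangle^{\frac12(\frac{p}{r}-\frac1\gamma)}\|\mathcal{N}(u)\|_{L^{\gamma}}
	\lesssim\Big(\langle t\rangle^{\frac12(\frac1r-\frac1{p\gamma})}\|u\|_{L^{p\gamma}}\Big)^{p},
\]
and then apply the interpolation inequality Lemma \ref{lem_ip}(ii) with target exponent $p\gamma$. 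For the weighted quantity, since $q=1$ I would use H\"older's inequality with $1=\tfrac12+\tfrac12$ together with $\langle x\rangle^{\alpha}|\mathcal{N}(u)|\lesssim(\langle x\rangle^{\alpha}|u|)\,|u|^{p-1}$, obtaining $\|\langle\cdot\rangle^{\alpha}\mathcal{N}(u)\|_{L^1}\lesssim\|\langle\cdot\rangle^{\alpha}u\|_{L^2}\|u\|_{L^{2(p-1)}}^{p-1}$; applying Lemma \ref{lem_ip}(i) to the weighted factor and Lemma \ref{lem_ip}(ii) with exponent $2(p-1)$ to the remaining factor yields $\|u\|_{X(T)}^{p}$, and a short bookkeeping check confirms that the accumulated power of $\langle t\rangle$ is precisely $\zeta$. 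The difference estimate is identical after replacing $|\mathcal{N}(u)|\lesssim|u|^{p}$ by $|\mathcal{N}(u)-\mathcal{N}(v)|\lesssim|u-v|(|u|+|v|)^{p-1}$ from \eqref{nonlin} and splitting off one factor $\|u-v\|_{L^{p\gamma}}$ (resp.\ $\|\langle\cdot\rangle^{\alpha}(u-v)\|_{L^2}$) and $p-1$ factors of $\||u|+|v|\|$ by H\"older's inequality, so that Lemma \ref{lem_ip} produces $\|u-v\|_{X(T)}(\|u\|_{X(T)}+\|v\|_{X(T)})^{p-1}$.

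The one step requiring genuine care, and where the hypotheses of Theorem \ref{thm_lwp} in the case $n=1$ are consumed, is verifying that the target exponents $p\gamma$ (for every $\gamma\in[1,2]$) and $2(p-1)$ lie in the range where Lemma \ref{lem_ip}(ii) applies, namely $[r,\tfrac{2}{1-2s}]$ when $2s<1$, $[r,\infty)$ when $2s=1$, and $[r,\infty]$ when $2s>1$. The lower bounds $r\le p\gamma$ and $r\le 2(p-1)$ both follow from $p\ge 1+\tfrac{r}{2}$ combined with $r\le2$, since $p\gamma\ge p\ge1+\tfrac r2\ge r$ and $2(p-1)\ge r$. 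The upper bounds are vacuous when $2s\ge1$, while for $2s<1$ they reduce, via $\gamma\le2$ and the assumption $p\le\tfrac{1}{1-2s}$, to $p\gamma\le 2p\le\tfrac{2}{1-2s}$ and $2(p-1)<2p\le\tfrac{2}{1-2s}$. Once these range conditions are in place, the rest is the routine matching of time weights carried out exactly as in Lemma \ref{lem_nl}.
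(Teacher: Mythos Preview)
Your proposal is correct and follows essentially the same approach as the paper's own proof: the same H\"older splits $\|\mathcal{N}(u)\|_{L^{\gamma}}\lesssim\|u\|_{L^{p\gamma}}^{p}$ and $\|\langle\cdot\rangle^{\alpha}\mathcal{N}(u)\|_{L^1}\lesssim\|\langle\cdot\rangle^{\alpha}u\|_{L^2}\|u\|_{L^{2(p-1)}}^{p-1}$, the same range checks on $p\gamma$ and $2(p-1)$ via $p\ge1+\tfrac{r}{2}$ and $p\le\tfrac{1}{1-2s}$, and the same appeal to Lemma~\ref{lem_ip}. Your explicit remark that the $\rho=2$ term coincides with the $\gamma=2$ endpoint of the second norm is a nice clarification that the paper leaves implicit.
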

%%%%%%%%%%%%%%%%%%%%%%%%%%%%%%%%%%%%%%%%%%%%%%%%%
\begin{proof}
At first, we consider
$\| \langle \cdot \rangle^{\alpha} \mathcal{N}(u) \|_{L^1}$.
The H\"{o}lder inequality yields
\begin{align*}
	\langle t \rangle^{\zeta} \| \langle \cdot \rangle^{\alpha}  \mathcal{N}(u) \|_{L^1}
	\lesssim \langle t \rangle^{%
				\frac{1}{2}\left(\frac{1}{r}-\frac{1}{2}\right)-\frac{\alpha}{2}}%
	\| \langle \cdot \rangle^{\alpha} u \|_{L^2}
			\left( \langle t \rangle^{%
				\frac{1}{2}\left( \frac{1}{r}-\frac{1}{2(p-1)}\right)}%
			\| u \|_{L^{2(p-1)}} \right)^{p-1}.
\end{align*}
Since $1+\frac{r}{2} \le p$, we see that $2(p-1) \ge r$.
Moreover, when $n>2s$, the assumption
$p \le \frac{1}{1-2s}$ implies $2(p-1) \le \frac{2n}{n-2s}$.
Therefore, we apply Lemma \ref{lem_ip} to obtain
\[
	\langle t \rangle^{\zeta} \| \langle \cdot \rangle^{\alpha}  \mathcal{N}(u) \|_{L^1}
		\lesssim \| u \|_{X(T)}^p.
\]
Next, we estimate
$\| \mathcal{N}(u) \|_{L^{\gamma}}$ with $\gamma \in [1,2]$.
We first obtain
\[
	\| \mathcal{N}(u) \|_{L^{\gamma}}
	\lesssim \| u \|_{L^{p \gamma}}^p.
\]
It follows from $p \ge 1+\frac{r}{2}$ that $p \gamma \ge r$ for any $\gamma \in [1,2]$.
Also, when $n > 2s$, the assumption $p \le \frac{1}{1-2s}$ ensures
$p \gamma \le \frac{2n}{n-2s}$.
Hence, we apply Lemma \ref{lem_ip} to derive
\[
	\langle t \rangle^{\frac{1}{2}\left( \frac{p}{r}-\frac{1}{\gamma}\right)}
	\| \mathcal{N}(u) \|_{L^{\gamma}}
	\lesssim
	\left( \langle t \rangle^{\frac{1}{2}\left( \frac{1}{r}-\frac{1}{p\gamma}\right)}
	\| u \|_{L^{p \gamma}} \right)^p
	\lesssim \| u \|_{X(T)}^p.
\]
The above estimates shows the conclusion.
\end{proof}

\begin{proof}[Proof of Theorems \ref{thm_lwp} and \ref{thm_gwp} when $n=1$]
The proof of Theorems \ref{thm_lwp} and \ref{thm_gwp}
is completely the same as the case $n\ge 2$ and
we omit the detail.
\end{proof}

%%%%%%%%%%%%%%%%%%%%%%%%%%%%%%%%%%%%%%%%%%%%%%%%%
%%%%%%%%%%%%%%%%%%%%%%%%%%%%%%%%%%%%%%%%%%%%%%%%%
%%%%%%%%%%%%%%%%%%%%%%%%%%%%%%%%%%%%%%%%%%%%%%%%%
%%%%%%%%%%%%%%%%%%%%%%%%%%%%%%%%%%%%%%%%%%%%%%%%% %3
\section{Asymptotic behavior of the global solution}
\subsection{Approximation by an inhomogeneous heat equation}
In this section, we study the asymptotic behavior of the global solution.
Let $u$ be the global-in-time solution proved in the previous section, that is,
\[
	u(t)= \tilde{\mathcal{D}}(t)\varepsilon u_0
		+ \mathcal{D}(t)\varepsilon u_1
		+ \int_0^t \mathcal{D}(t-\tau)\mathcal{N}(u(\tau))\,d\tau.
\]
First, we consider the solution of the inhomogeneous linear heat equation
\begin{align}
\label{inhe}
	( \partial_t - \Delta )v = \mathcal{N}(u)
\end{align}
with the initial data
$v(0,x)=\varepsilon (u_0+u_1)$, that is,
\[
	v(t)=
	\mathcal{G}(t) \left( \varepsilon u_0 + \varepsilon u_1 \right)
	+ \int_0^t \mathcal{G}(t-\tau) \mathcal{N}(u(\tau))\,d\tau,
\]
where $\mathcal{G}(t)$ is defined in \eqref{g}.
We first prove that the asymptotic profile of $u$ is given by $v$
in $H^{s,0}\cap H^{0,\alpha}$-sense.
%%%%%%%%%%%%%%%%%%%%%%%%%%%%%%%%%%%%%%%%%%%
\begin{proposition}\label{prop_asy}
Under the assumptions of Theorem \ref{thm_gwp},
we have
\begin{align}
\label{esdifs}
	\| |\nabla|^s ( u(t) - v(t) ) \|_{L^2}
	& \lesssim \langle t \rangle^{%
		-\frac{n}{2} \left( \frac{1}{r}-\frac{1}{2}\right)-\frac{s}{2}%
		- \min\{ 1 , \frac{n}{2r}(p-1) - \frac{1}{2} \}},\\
\label{esdif}
	\| u(t) - v(t) \|_{L^2}
	& \lesssim \langle t \rangle^{%
		-\frac{n}{2} \left( \frac{1}{r}-\frac{1}{2}\right)
		- \min\{ 1 , \frac{n}{2r}(p-1) - \frac{1}{2} \} },\\
\label{esdifa}
	\| |\cdot|^{\beta} ( u(t) - v(t) ) \|_{L^2}
	& \lesssim \langle t \rangle^{%
		 - \frac{n}{2} \left( \frac{1}{r}-\frac{1}{2}\right) + \frac{\beta}{2}
		-\min\{ 1 , \frac{n}{2r}(p-1) - \min\{ \frac{n}{4}, \frac{1}{2} \} \} },
\end{align}
where
$\beta$ is arbitrary number satisfying
$n(\frac1r - \frac12) < \beta \le \alpha$.
\end{proposition}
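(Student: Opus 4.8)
\emph{Strategy and reduction.} The plan is to subtract the two Duhamel representations and estimate the difference piece by piece. Writing
\[
	u(t)-v(t)
	= \bigl(\tilde{\mathcal{D}}(t)-\mathcal{G}(t)\bigr)\varepsilon u_0
	+ \bigl(\mathcal{D}(t)-\mathcal{G}(t)\bigr)\varepsilon u_1
	+ \int_0^t \bigl(\mathcal{D}(t-\tau)-\mathcal{G}(t-\tau)\bigr)\mathcal{N}(u(\tau))\,d\tau,
\]
the proposition reduces to two ingredients. First, a \emph{diffusion phenomenon} version of Lemma \ref{lem_lin}: the operators $\mathcal{D}(t)-\mathcal{G}(t)$ and $\tilde{\mathcal{D}}(t)-\mathcal{G}(t)$ satisfy the same estimates \eqref{est_der}, \eqref{est_wei} as $\mathcal{D}(t)$, but with an extra decay factor $\langle t\rangle^{-1}$. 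Second, the decay of $\mathcal{N}(u)$ measured in the $Y$-norm, which is finite and $O(\varepsilon^p)$ by Lemma \ref{lem_nl} together with the global bound $\|u\|_{X(\infty)}\lesssim\varepsilon$ from Theorem \ref{thm_gwp}. Throughout I set $A:=\frac{n}{2r}(p-1)$, and recall that the standing hypothesis $p>1+\frac{2r}{n}$ gives $A>1$.

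\emph{The gained kernel estimate.} For the first ingredient I would revisit the proof of Lemma \ref{lem_lin} on the low-frequency region $|\xi|\le 1/2$, writing $e^{-t/2}L(t,\xi)=\frac{e^{\lambda_+ t}-e^{\lambda_- t}}{\lambda_+-\lambda_-}$ with $\lambda_\pm=-\tfrac12\pm\sqrt{\tfrac14-|\xi|^2}$. Expanding $\lambda_+=-|\xi|^2+O(|\xi|^4)$ and $(\lambda_+-\lambda_-)^{-1}=1+O(|\xi|^2)$ shows that the symbol difference $e^{-t/2}L(t,\xi)-e^{-t|\xi|^2}$ vanishes to order $|\xi|^2$ at the origin and is bounded by $C(|\xi|^2+t|\xi|^4)e^{-ct|\xi|^2}$, while the $e^{\lambda_- t}$ contribution is exponentially small. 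The extra factor $|\xi|^2$, inserted into the Hölder/scaling computation of Lemma \ref{lem_lin} (and into the $K(t,x)$-kernel argument for the weighted bound), produces exactly the gain $\langle t\rangle^{-1}$. The high-frequency parts of $\mathcal{D}$ and of $\mathcal{G}$ are both exponentially decaying and are treated verbatim. This yields, e.g.,
\[
	\|\,|\nabla|^{s_1}(\mathcal{D}(t)-\mathcal{G}(t))\psi\|_{L^2}
	\lesssim \langle t\rangle^{-\frac n2(\frac1\gamma-\frac12)-\frac{s_1-s_2}{2}-1}\,\|\,|\nabla|^{s_2}\psi\|_{L^\gamma}
	+(\text{exp.\ decay}),
\]
together with its weighted and $\tilde{\mathcal{D}}$ analogues.

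\emph{Linear terms and the bulk of the Duhamel integral.} The linear terms are then immediate: applying the gained estimate with $\gamma=\nu=r$ (using $H^{0,\alpha}\hookrightarrow L^r$ from \eqref{eq_ip3}) bounds $\|(\tilde{\mathcal{D}}-\mathcal{G})u_0\|$ and $\|(\mathcal{D}-\mathcal{G})u_1\|$, in all three norms, by the leading rate with an extra $\langle t\rangle^{-1}$, which decays at least as fast as asserted since each minimum is $\le 1$. The heart of the matter is the Duhamel difference, which I would split as $\int_0^{t/2}+\int_{t/2}^{t-1}+\int_{t-1}^{t}$. On $[0,t/2]$ the kernel difference is evaluated at times $\ge t/2$, so the full gain applies; taking $\gamma=r$ and using $\|\mathcal{N}(u(\tau))\|_{L^r}\lesssim\langle\tau\rangle^{-A}\|u\|_{X}^p$ from the $Y$-norm, the time integral $\int_0^{t/2}\langle\tau\rangle^{-A}\,d\tau$ converges because $A>1$, producing the rate governed by the exponent $1$ in the minimum. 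On $[t/2,t-1]$ the kernel difference still carries the gain, and measuring $\mathcal{N}(u)$ in $L^q$ (resp.\ $|\nabla|^{[s]}\mathcal{N}(u)$ in $L^{\rho}$, resp.\ $|\cdot|^\alpha\mathcal{N}(u)$ in $L^q$ via the $\nu=q$ term of \eqref{est_wei}) gives a factor $\langle t-\tau\rangle^{-\frac n2(\frac1q-\frac12)-1}=\langle t-\tau\rangle^{-3/2}$ that is integrable near $\tau=t-1$; this yields the second exponent of the minimum, $A-\frac n2(\frac1q-\frac12)$, where $\frac n2(\frac1q-\frac12)=\min\{\frac n4,\frac12\}$ (equal to $\frac12$ for $n\ge2$ and to $\frac14=\frac n4$ for $n=1$, where $q=1$). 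One checks directly, using the definitions of $\eta$ and $\zeta$, that this exponent is precisely the one appearing in \eqref{esdifs}--\eqref{esdifa}.

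\emph{The main obstacle.} The difficulty is the last slice $[t-1,t]$, where $t-\tau$ is small and the diffusion gain is unavailable: indeed $\mathcal{D}(0)=0$ while $\mathcal{G}(0)=I$, so $\mathcal{D}(\sigma)-\mathcal{G}(\sigma)$ is only $O(1)$ as $\sigma\to0$. Here I would abandon the difference structure and bound the two operators separately by the individual smoothing estimate of Lemma \ref{lem_lin} and its elementary heat analogue, for instance $\|(\mathcal{D}(\sigma)-\mathcal{G}(\sigma))\mathcal{N}(u(\tau))\|_{L^2}\lesssim\langle\sigma\rangle^{-\frac n2(\frac1q-\frac12)}\|\mathcal{N}(u(\tau))\|_{L^q}$; since the slice has length one and $\langle\tau\rangle\sim\langle t\rangle$ on it, this contributes only $\langle t\rangle^{-\frac n2(\frac pr-\frac1q)}$, exactly matching the rate from $[t/2,t-1]$. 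The key point is that the sharp exponent carries the loss $-\frac n2(\frac1q-\frac12)$ rather than the cleaner $A$ precisely because the exponent $q$ (not $2$) must be used near the diagonal, both in the smoothing of the kernel and in the $Y$-controlled norm of $\mathcal{N}(u)$. Keeping careful track of the competing rates produced by the three slices, and combining them with the linear terms, gives the minima in \eqref{esdifs}, \eqref{esdif} and \eqref{esdifa}, the weighted case differing only through the $\nu=q$ term of the weighted kernel estimate, which replaces $\frac12$ by $\min\{\frac n4,\frac12\}$.
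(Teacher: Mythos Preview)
Your approach is essentially the paper's own: the same gained estimate for $\mathcal{D}(t)-\mathcal{G}(t)$ (stated there as Lemma~\ref{lem_dif}) combined with the $Y$-norm control of $\mathcal{N}(u)$, and the same splitting of the Duhamel integral. The only difference is that the paper uses just two pieces $[0,t/2]\cup[t/2,t]$; your extra slice $[t-1,t]$ is unnecessary because the gained estimate carries $\langle t-\tau\rangle^{-1}$ rather than $(t-\tau)^{-1}$, so there is no singularity at $\tau=t$, and choosing $\gamma=q$ (resp.\ $\gamma=\rho$) on $[t/2,t]$ already yields an integrable power $\langle t-\tau\rangle^{-3/2}$ near the diagonal.
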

%%%%%%%%%%%%%%%%%%%%%%%%%%%%%%%%%%%%%%%%%%%%

%%%%%%%%%%%%%%%%%%%%%%%%%%%%%%%%%%%%%%%%%%%% %3.2
\subsection{Preliminary estimates}
In order to prove Proposition \ref{prop_asy}, we prepare the following lemma.
%%%%%%%%%%%%%%%%%%%%%%%%%%%%%%%%%%%%%%%%%%%%%
\begin{lemma}\label{lem_dif}
Let
$\gamma, \nu \in[1,2]$, $\beta \ge 0$ and $s_1 \ge s_2 \ge 0$.
Then, we have
\begin{align*}
	\| |\nabla |^{s_1} ( \mathcal{D}(t)\psi - \mathcal{G}(t) \psi ) \|_{L^2}
	&\lesssim
	\langle t \rangle^{-\frac{s_1-s_2}{2}-\frac{n}{2}\left(\frac{1}{\gamma}-\frac{1}{2}\right)-1}
	\| |\nabla|^{s_2} \psi\|_{L^{\gamma}}
	+e^{-\frac{t}{4}}
	\| |\nabla|^{s_1} \langle \nabla \rangle^{-1} \psi \|_{L^2},\\
	\| |\cdot |^{\beta} ( \mathcal{D}(t)\psi - \mathcal{G}(t) \psi ) \|_{L^2}
	&\lesssim
	\langle t \rangle^{\frac{\beta}{2}-\frac{n}{2}\left(\frac{1}{\gamma}-\frac{1}{2}\right)-1}
	\|\psi\|_{L^{\gamma}}
	+\langle t \rangle^{-\frac{n}{2}\left(\frac{1}{\nu}-\frac{1}{2}\right)-1}
	\| |\cdot|^{\beta} \psi \|_{L^{\nu}} \\
	&\quad +e^{-\frac{t}{4}}
	\| \langle \cdot \rangle^{\beta} \langle \nabla \rangle^{-1} \psi \|_{L^2}
\end{align*}
for any $t \ge 1$.
%\begin{align*}
%%\label{dif1}
%	\| |\nabla |^{s} ( \mathcal{D}(t)\psi - \mathcal{G}(t) \psi ) \|_{L^2}
%	&\le
%	C\langle t \rangle^{-\frac{s}{2}-\frac{n}{2}\left(\frac{1}{\gamma}-\frac{1}{2}\right)-1}
%	\|\psi\|_{L^{\gamma}}
%	+Ce^{-\frac{t}{4}}
%	\| |\nabla|^{s} \langle \nabla \rangle^{-1} \psi \|_{L^2},\\
%%\label{dif2}
%	\| |\nabla |^{s} ( \tilde{\mathcal{D}}(t)\psi - \mathcal{G}(t) \psi ) \|_{L^2}
%	&\le
%	C\langle t \rangle^{-\frac{s}{2}-\frac{n}{2}\left(\frac{1}{\gamma}-\frac{1}{2}\right)-1}
%	\|\psi\|_{L^{\gamma}}
%	+Ce^{-\frac{t}{4}}
%	\| |\nabla|^{s} \psi \|_{L^2},\\
%%\label{dif3}
%	\| |\cdot |^{\alpha} ( \mathcal{D}(t)\psi - \mathcal{G}(t) \psi ) \|_{L^2}
%	&\le
%	C\langle t \rangle^{\frac{\alpha}{2}-\frac{n}{2}\left(\frac{1}{\gamma}-\frac{1}{2}\right)-1}
%	\|\psi\|_{L^{\gamma}}
%	+Ce^{-\frac{t}{4}}
%	\| |\cdot|^{\alpha} \langle \nabla \rangle^{-1} \psi \|_{L^2},\\
%%\label{dif4}
%	\| |\cdot |^{\alpha} ( \tilde{\mathcal{D}}(t)\psi - \mathcal{G}(t) \psi ) \|_{L^2}
%	&\le
%	C\langle t \rangle^{\frac{\alpha}{2}-\frac{n}{2}\left(\frac{1}{\gamma}-\frac{1}{2}\right)-1}
%	\|\psi\|_{L^{\gamma}}
%	+Ce^{-\frac{t}{4}}
%	\| |\cdot |^{\alpha} \psi \|_{L^2}.
%\end{align*}
\end{lemma}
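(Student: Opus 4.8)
The plan is to run the proof of Lemma~\ref{lem_lin} again, but with the kernel $L(t,\xi)$ replaced by the difference multiplier
\[
	M(t,\xi):=e^{-\frac t2}L(t,\xi)-e^{-t|\xi|^2},
\]
the point being that in the low-frequency regime $M$ decays one power of $\langle t\rangle^{-1}$ faster than the bounds exploited there, while the high-frequency parts are handled verbatim. Let $\chi$ be the cut-off of Lemma~\ref{lem_lin} ($\chi=1$ on $|\xi|\le 1$, $\operatorname{supp}\chi\subset\{|\xi|\le 2\}$) and write $M=M\chi+e^{-\frac t2}L(1-\chi)-e^{-t|\xi|^2}(1-\chi)$. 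The two pieces carrying $(1-\chi)$ are supported in $|\xi|\ge1$ and satisfy $|\partial_\xi^k(\cdot)|\lesssim e^{-t/4}$ for every $k$: for $e^{-\frac t2}L(1-\chi)$ this is exactly the bound behind \eqref{est_lc2}, and for $e^{-t|\xi|^2}(1-\chi)$ it follows from $e^{-t|\xi|^2}\le e^{-t/2}e^{-\frac t2|\xi|^2}$ on $|\xi|\ge1$. Feeding them through the Plancherel/H\"older computation of \eqref{est_der} and the fractional-Leibniz computation of \eqref{est_lc2} produces precisely the exponential remainders $e^{-t/4}\||\nabla|^{s_1}\langle\nabla\rangle^{-1}\psi\|_{L^2}$ and $e^{-t/4}\|\langle\cdot\rangle^{\beta}\langle\nabla\rangle^{-1}\psi\|_{L^2}$ in the two asserted inequalities.

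Everything therefore reduces to the low-frequency symbol $M(t,\xi)\chi(\xi)$, and the crucial input is the pointwise (and, for the weighted estimate, differentiated) gain of $\langle t\rangle^{-1}$. On $|\xi|\le\frac14$ the factorisation
\[
	e^{-\frac t2}L(t,\xi)=\frac{e^{\lambda_+t}-e^{\lambda_-t}}{\lambda_+-\lambda_-},\qquad
	\lambda_\pm=-\tfrac12\pm\sqrt{\tfrac14-|\xi|^2},
\]
is non-degenerate ($\lambda_+-\lambda_-=2\sqrt{\tfrac14-|\xi|^2}\ge c_0>0$), and one has $\lambda_+=-|\xi|^2+O(|\xi|^4)$, $\lambda_-\le-\tfrac12$, $(\lambda_+-\lambda_-)^{-1}=1+O(|\xi|^2)$. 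Hence $e^{\lambda_-t}/(\lambda_+-\lambda_-)=O(e^{-t/2})$, while
\[
	\frac{e^{\lambda_+t}}{\lambda_+-\lambda_-}-e^{-t|\xi|^2}
	=e^{-t|\xi|^2}\Big[(1+O(|\xi|^2))\,e^{tO(|\xi|^4)}-1\Big],
\]
so that $|M(t,\xi)|\lesssim(|\xi|^2+t|\xi|^4)\,e^{-ct|\xi|^2}+e^{-ct}$ on $|\xi|\le\frac14$. On the remaining part $\frac14\le|\xi|\le2$ of $\operatorname{supp}\chi$ both symbols are $O(e^{-ct})$ (using $e^{-\frac t2}|L|\lesssim t\,e^{-t/2}$ and $e^{-t|\xi|^2}\le e^{-t/16}$), which is faster than any power of $\langle t\rangle^{-1}$ and is absorbed. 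The surviving factors $|\xi|^2e^{-ct|\xi|^2}$ and $t|\xi|^4e^{-ct|\xi|^2}$, after the substitution $\xi=\eta/\sqrt t$, are both of size $\langle t\rangle^{-1}e^{-c|\eta|^2}$, which is the advertised extra decay.

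With this bound the two estimates follow as in Lemma~\ref{lem_lin}. For the derivative estimate, Plancherel and H\"older give
\[
	\||\nabla|^{s_1}(\mathcal{D}(t)-\mathcal{G}(t))\psi\|_{L^2}
	\lesssim \big\||\xi|^{s_1-s_2}M(t,\xi)\chi\big\|_{L^{\frac{2\gamma}{2-\gamma}}}\,
		\big\||\xi|^{s_2}\hat\psi\big\|_{L^{\frac{\gamma}{\gamma-1}}}+(\text{high freq}),
\]
and inserting the pointwise bound and integrating in $\eta=\sqrt t\,\xi$ controls the first factor by $\langle t\rangle^{-\frac{s_1-s_2}{2}-\frac n2(\frac1\gamma-\frac12)-1}$. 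For the weighted estimate, set $K_{\mathrm{diff}}(t):=\mathcal{F}^{-1}[M(t,\xi)\chi(\xi)]$, reproduce the Hausdorff--Young splitting \eqref{ine_K} with $K_{\mathrm{diff}}$ in place of $K$, and prove the kernel bound $\||\cdot|^{\beta}K_{\mathrm{diff}}(t)\|_{L^k}\lesssim\langle t\rangle^{\frac\beta2-\frac n2(1-\frac1k)-1}$ for $t\ge1$ by the change of variables $\sqrt t\,\xi=\eta$, $x=\sqrt t\,y$, exactly as in the proof of \eqref{est_wei}. The main obstacle is the differentiated version of the key bound needed at this last step, namely
\[
	\Big|\langle\nabla_\eta\rangle^{2N}\big(M(t,\eta/\sqrt t)\chi(\eta/\sqrt t)\big)\Big|
	\lesssim \langle t\rangle^{-1}e^{-c|\eta|^2}\qquad(t\ge1):
\]
one must differentiate the $O(|\xi|^2)$ and $tO(|\xi|^4)$ error terms together with the factor $(\tfrac14-|\xi|^2)^{-1/2}$, whose mild singularity at $|\xi|=\tfrac12$ is defeated by the Gaussian $e^{-c|\eta|^2}$ after rescaling (there $|\eta|\gtrsim\sqrt t$). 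Once this uniform $\langle t\rangle^{-1}$-gain is secured, the remaining computations are identical to those of Lemma~\ref{lem_lin} and yield both inequalities for every $t\ge1$.
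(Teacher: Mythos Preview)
Your approach is essentially the same as the paper's: split into low and high frequencies with $\chi$, use the $O(|\xi|^2)$ cancellation in $M(t,\xi)=e^{-t/2}L(t,\xi)-e^{-t|\xi|^2}$ at low frequency to gain the extra $\langle t\rangle^{-1}$, and recycle the high-frequency arguments of Lemma~\ref{lem_lin} verbatim. The paper is terser---it simply records $e^{-t/2}L(t,\xi)-e^{-t|\xi|^2}=e^{-t|\xi|^2}O(|\xi|^2)$ and refers back to the proof of \eqref{est_K}---whereas you spell out the $\lambda_\pm$ expansion explicitly; but the substance is identical.

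One small remark: your concern about a ``mild singularity at $|\xi|=\tfrac12$'' is unnecessary. The function $L(t,\xi)$ is an entire function of $|\xi|^2$ (since $\sinh z/z$ is entire), so $e^{-t/2}L(t,\xi)$ and all its $\xi$-derivatives are smooth across $|\xi|=\tfrac12$; no Gaussian rescue is needed there. Your separate treatment of $\tfrac14\le|\xi|\le 2$ via the crude bound $O(e^{-ct})$ already suffices, and the differentiated kernel bound on $|\xi|\le\tfrac14$ is then routine.
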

%%%%%%%%%%%%%%%%%%%%%%%%%%%%%%%%%%%%%%%%%%%%%
\begin{proof}
Since
\[
	\| |\xi|^{s} ( e^{-\frac{t}{2}} L(t,\xi)-e^{- t |\xi|^2} ) \|_{L^{\gamma}(|\xi|\le 1)}
	\lesssim \langle t \rangle^{-\frac{s}{2}-\frac{n}{2\gamma}-1}
\]
and
\[
	\| \langle \xi \rangle L(t,\xi) \|_{L^{\infty}(|\xi| > 1 )} \lesssim 1,
	\quad
	\| \langle \xi \rangle e^{-t |\xi|^2} \|_{L^{\infty}(|\xi| > 1 )} \lesssim e^{-t/4},
\]
we have
\begin{align*}
	\| |\nabla|^{s_1} ( \mathcal{D}(t)\psi - \mathcal{G}(t) \psi ) \|_{L^2}
	&=
	\| |\xi|^{s_1} ( e^{-\frac{t}{2}}L(t,\xi)-e^{-t|\xi|^2}) \hat{\psi} \|_{L^2}\\
	&\lesssim
	\| |\xi|^{s_1-s_2}( e^{-\frac{t}{2}}L(t,\xi)-e^{-t|\xi|^2})
		\|_{L^{\frac{2\gamma}{2-\gamma}}(|\xi|\le 1)}
		\| |\xi|^{s_2} \hat{\psi} \|_{L^{\frac{\gamma}{\gamma-1}}(|\xi|\le 1)}\\
	&\quad
	+e^{-\frac{t}{2}}\| \langle \xi \rangle L(t,\xi) \|_{L^{\infty}(|\xi| >1)}
		\| |\xi|^{s_1}\langle \xi \rangle^{-1} \hat{\psi} \|_{L^2(|\xi|>1)}\\
	&\quad
	+e^{-\frac{t}{4}} \|\langle \xi \rangle e^{-\frac{t}{4}|\xi|^2} \|_{L^{\infty}(|\xi|>1)}
		\| |\xi|^{s_1}\langle \xi \rangle^{-1} \hat{\psi} \|_{L^2(|\xi|>1)}\\
	&\lesssim
	\langle t \rangle^{-\frac{s_1-s_2}{2}-\frac{n}{2}\left(\frac{1}{\gamma}-\frac{1}{2}\right)-1}
	\| |\nabla|^{s_2} \psi \|_{L^{\gamma}}
	+ e^{-\frac{t}{4}}
	\| |\nabla|^{s_1} \langle \nabla \rangle^{-1} \psi \|_{L^2},
\end{align*}
which implies the first assertion.

To prove the second estimate, we take a cut-off function
$\chi(\xi) \in C_0^{\infty}(\mathbb{R}^n)$
satisfying $\chi(\xi) = 1$ for $|\xi|\le 1$ and $\chi(\xi) = 0$ for $|\xi| \ge 2$.
We put
\[
	\bar{K}(t,x) := \mathcal{F}^{-1} \chi(\xi) \left( e^{-\frac{t}{2}} L(t,\xi) - e^{-t|\xi|^2} \right).
\]
Then, in the same way to \eqref{ine_K}, we see that
\begin{align*}
	&\left\| |\cdot|^{\beta} \mathcal{F}^{-1} \chi(\xi)
		\left( e^{-\frac{t}{2}} L(t,\xi) - e^{-t|\xi|^2} \right) \hat{\psi} \right\|_{L^2} \\
	&\quad \lesssim
		\| |\cdot|^{\beta} \bar{K}(t) \|_{L^{\frac{2\gamma}{3\gamma-2}}}
		\| \psi \|_{L^{\gamma}}
	+ \| \bar{K}(t) \|_{L^{\frac{2\nu}{3\nu-2}}} \| |\cdot|^{\beta} \psi \|_{L^{\nu}}.
\end{align*}
Moreover, in a similar way to \eqref{est_K}
with
\[
	e^{-\frac{t}{2}} L(t,\xi) - e^{-t |\xi|^2}
	= e^{-t|\xi|^2} O(|\xi|^2)
\]
as $|\xi| \to 0$, we can prove
\begin{align*}
\label{est_bK}
	\| |\cdot|^{\beta} \bar{K}(t) \|_{L^k}
	\lesssim \langle t \rangle^{\frac{\beta}{2}-\frac{n}{2}\left( 1-\frac{1}{k} \right)-1},
\end{align*}
which implies
\begin{align*}
	&\left\| |\cdot|^{\beta} \mathcal{F}^{-1} \chi(\xi)
		\left( e^{-\frac{t}{2}} L(t,\xi) - e^{-t|\xi|^2} \right) \hat{\psi} \right\|_{L^2} \\
	&\quad \lesssim
	\langle t \rangle^{-\frac{n}{2}\left( \frac{1}{\gamma}-\frac{1}{2} \right)+\frac{\beta}{2}-1}
	\| \psi \|_{L^{\gamma}}
	+ \langle t \rangle^{ - \frac{n}{2}\left( \frac{1}{\nu}-\frac{1}{2} \right)-1}
	\| |\cdot|^{\beta} \psi \|_{L^{\nu}}.
\end{align*}
We also easily obtain
\[
	\| |\cdot|^{\beta} \mathcal{F}^{-1}\left(
		e^{-t |\xi|^2} (1-\chi(\xi)) \hat{\psi} \right) \|_{L^2}
	\lesssim e^{-\frac{t}{4}}
		\| \langle \cdot \rangle^{\beta} \langle \nabla \rangle^{-1} \psi \|_{L^2}.
\]
This and \eqref{est_lc2} lead to
\[
	\left\| |\cdot|^{\beta} \mathcal{F}^{-1}
		\left( e^{-\frac{t}{2}}L(t,\xi) - e^{-t |\xi|^2} \right) (1-\chi(\xi)) \hat{\psi} \right\|_{L^2}
	\lesssim e^{-\frac{t}{4}}
	\left\| \langle \cdot \rangle^{\beta} \langle \nabla \rangle^{-1} \psi  \right\|_{L^2}.
\]
From this, we reach the conclusion.
\end{proof}

In the same way, we have the following.
%%%%%%%%%%%%%%%%%%%%%%%%%%%%%%%%%%%%%%%%
\begin{lemma}\label{lem_dif2}
Let $\gamma, \nu \in [1,2]$, $\beta \ge 0$, $s_1 \ge s_2 \ge 0$.
Then, we have
\begin{align}
\label{est_der4}
	\| |\nabla|^{s_1} (\tilde{\mathcal{D}}(t) - \mathcal{G}(t)) \psi \|_{L^2}
	&\lesssim
	\langle t \rangle^{-\frac{n}{2}\left( \frac{1}{\gamma}-\frac{1}{2}\right)-\frac{s_1-s_2}{2}-1}
		\| |\nabla|^{s_2} \psi \|_{L^{\gamma}}
		+ e^{-t/4} \| |\nabla|^{s_1} \psi \|_{L^2},\\
\nonumber
	\| | \cdot |^{\beta} ( \tilde{\mathcal{D}}(t) - \mathcal{G}(t) ) \psi \|_{L^2}
	&\lesssim
	\langle t \rangle^{-\frac{n}{2}\left( \frac{1}{\gamma}-\frac{1}{2} \right)+\frac{\beta}{2}-1}
		\| \psi \|_{L^{\gamma}}
	+\langle t \rangle^{-\frac{n}{2}\left( \frac{1}{\nu}-\frac{1}{2}\right)-1}
		\| |\cdot|^{\beta} \psi \|_{L^{\nu}}\\
\label{est_wei4}
	&\quad + e^{-\frac{t}{4}} \| \langle \cdot \rangle^{\beta} \psi \|_{L^2}.
\end{align}
\end{lemma}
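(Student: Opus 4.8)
The plan is to imitate the proof of Lemma \ref{lem_dif} (together with the high-frequency bookkeeping of Lemma \ref{lem_lin2}), the only new ingredient being the Fourier symbol of $\tilde{\mathcal{D}}(t)$. Since $\mathcal{D}(t)$ has symbol $e^{-t/2}L(t,\xi)$, the operator $\tilde{\mathcal{D}}(t)=(\partial_t+1)\mathcal{D}(t)$ has symbol
\[
	\tilde{L}(t,\xi):=(\partial_t+1)\bigl(e^{-t/2}L(t,\xi)\bigr)
	=e^{-t/2}\Bigl(\tfrac12 L(t,\xi)+\partial_t L(t,\xi)\Bigr).
\]
On $|\xi|<1/2$, writing $A=\sqrt{1/4-|\xi|^2}$ we have $L=A^{-1}\sinh(tA)$ and $\partial_t L=\cosh(tA)$, so $\tilde{L}=e^{-t/2}(\tfrac12 A^{-1}\sinh(tA)+\cosh(tA))$; on $|\xi|>1/2$ the corresponding expression is $e^{-t/2}(\tfrac12 B^{-1}\sin(tB)+\cos(tB))$ with $B=\sqrt{|\xi|^2-1/4}$. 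I would split the multiplier $\tilde{L}(t,\xi)-e^{-t|\xi|^2}$ into a low-frequency part $|\xi|\le 1$ and a high-frequency part $|\xi|\ge 1$ exactly as in Lemma \ref{lem_dif}.

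For the low-frequency part the key is the quadratic cancellation
\[
	\tilde{L}(t,\xi)-e^{-t|\xi|^2}=e^{-t|\xi|^2}\,O(|\xi|^2)\qquad(|\xi|\to 0).
\]
Indeed, from $A=\tfrac12-|\xi|^2+O(|\xi|^4)$ one gets $e^{-t/2}e^{tA}=e^{-t|\xi|^2}(1+O(|\xi|^2))$, while the coefficient $\tfrac12 A^{-1}+1$ multiplying the dominant exponential equals $2+O(|\xi|^2)$ and the subdominant exponential $e^{-tA}$ is $O(e^{-t/4})$; collecting these shows that the leading term is precisely $e^{-t|\xi|^2}$ and the remainder carries a factor $|\xi|^2$. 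Differentiating this expansion, one obtains
\[
	\bigl\| |\xi|^{s}\bigl(\tilde{L}(t,\xi)-e^{-t|\xi|^2}\bigr) \bigr\|_{L^{\gamma}(|\xi|\le1)}
	\lesssim \langle t\rangle^{-\frac{s}{2}-\frac{n}{2\gamma}-1},
\]
which is the source of the gain $\langle t\rangle^{-1}$ in both displayed estimates. Feeding this bound into the Plancherel/H\"older argument of Lemma \ref{lem_dif} produces the first term of \eqref{est_der4}, and, after introducing the cut-off $\chi$ and the kernel $\bar{K}$ as in \eqref{ine_K}--\eqref{est_K}, the first two terms of \eqref{est_wei4}.

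For the high-frequency part $|\xi|\ge1$ the behaviour differs from $\mathcal{D}(t)$: because the extra time derivative in $\tilde{\mathcal{D}}(t)$ produces the bounded-but-non-decaying-in-$\xi$ factor $\cos(tB)$, we only have $\|\tilde{L}(t,\xi)\|_{L^\infty(|\xi|\ge1)}\lesssim e^{-t/2}$, with no gain of a derivative (this is why the high-frequency terms in \eqref{est_der4}--\eqref{est_wei4} carry $\||\nabla|^{s_1}\psi\|_{L^2}$ and $\|\langle\cdot\rangle^\beta\psi\|_{L^2}$ rather than their $\langle\nabla\rangle^{-1}$-smoothed versions, exactly as in Lemma \ref{lem_lin2}). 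Since also $|\xi|^{s_1}e^{-t|\xi|^2}\le e^{-t/4}|\xi|^{s_1}$ on $|\xi|\ge1$, Plancherel gives the $e^{-t/4}\||\nabla|^{s_1}\psi\|_{L^2}$ term of \eqref{est_der4}; for the weighted estimate the high-frequency contribution is handled by the fractional Leibniz computation from the proof of Lemma \ref{lem_lin} (i.e.\ the bound leading to \eqref{est_lc2}), now applied to $\phi=\langle\xi\rangle e^{-t/2}\tilde{L}(t,\xi)(1-\chi(\xi))$, together with the Gaussian tail bound $\||\cdot|^\beta\mathcal{F}^{-1}(e^{-t|\xi|^2}(1-\chi)\hat\psi)\|_{L^2}\lesssim e^{-t/4}\|\langle\cdot\rangle^\beta\langle\nabla\rangle^{-1}\psi\|_{L^2}$, yielding the last term of \eqref{est_wei4}.

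The main obstacle is the low-frequency cancellation: one must check that the $\tfrac12 L$ and $\partial_t L$ pieces of $\tilde{L}$ combine so that, after multiplication by $e^{-t/2}$, the $O(1)$ and $O(|\xi|)$ contributions reproduce exactly the heat symbol $e^{-t|\xi|^2}$ and only an $O(|\xi|^2)$ error survives, uniformly for $t\ge1$ after taking the $\xi$-derivatives needed for the $L^\gamma$ and kernel estimates. Once this expansion is in hand the remaining steps are routine repetitions of Lemmas \ref{lem_lin}, \ref{lem_lin2} and \ref{lem_dif}.
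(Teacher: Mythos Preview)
Your overall strategy is correct and matches the paper's intended approach (the paper simply writes ``In the same way, we have the following'' after Lemma~\ref{lem_dif}, so the argument is indeed meant to be a repetition with $\tilde{L}$ in place of $e^{-t/2}L$). The low-frequency cancellation $\tilde{L}(t,\xi)-e^{-t|\xi|^2}=e^{-t|\xi|^2}O(|\xi|^2)$ is exactly the right observation and your expansion is sound.

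There is one small slip in the high-frequency part of the weighted estimate. You propose to run the fractional Leibniz computation with $\phi=\langle\xi\rangle e^{-t/2}\tilde{L}(t,\xi)(1-\chi(\xi))$, by direct analogy with the choice in the proof of Lemma~\ref{lem_lin}. But this does not work here: since $\tilde{L}$ already contains the factor $e^{-t/2}$ (so your expression has a spurious extra $e^{-t/2}$), and more importantly because the $\cos(tB)$ piece of $\tilde{L}$ is merely bounded in $\xi$, the product $\langle\xi\rangle\tilde{L}(t,\xi)$ is \emph{not} bounded on $|\xi|\ge1$, so the required $L^\infty$ bounds on $\partial_j^k\phi$ fail. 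The fix is exactly what the loss of the $\langle\nabla\rangle^{-1}$ smoothing in \eqref{est_wei4} is telling you: take $\phi=\tilde{L}(t,\xi)(1-\chi(\xi))$ (no $\langle\xi\rangle$) and pair it with $\hat{\psi}$ itself rather than $\langle\xi\rangle^{-1}\hat{\psi}$. Then $\|\partial_j^k\phi\|_{L^\infty}\lesssim e^{-t/4}$ holds (the $t$-powers coming from differentiating $\cos(tB)$ are absorbed by $e^{-t/2}$), and the Leibniz bound produces $e^{-t/4}\|\langle\cdot\rangle^\beta\psi\|_{L^2}$ directly. The Gaussian tail term can be handled the same way, with $\phi=e^{-t|\xi|^2}(1-\chi(\xi))$, avoiding the detour through $\|\langle\cdot\rangle^\beta\langle\nabla\rangle^{-1}\psi\|_{L^2}$.
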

%%%%%%%%%%%%%%%%%%%%%%%%%%%%%%%%%%%%%%%%

\subsection{Proof of Proposition \ref{prop_asy}}

For simplicity, we treat only the case $n\ge 2$.
The proof of the case $n=1$ is similar.
Let us estimate the nonlinear part of $u-v$.
We divide
\[
	\left\| |\nabla|^s \int_0^t \left( \mathcal{D}(t-\tau)\mathcal{N}(u(\tau))
		-\mathcal{G}(t-\tau) \mathcal{N}(u(\tau))\right)\,d\tau \right\|_{L^2}
	\le I + I\!I,
\]
where
\[
	I= \int_0^{t/2}\left\| |\nabla|^s \left( \mathcal{D}(t-\tau)-\mathcal{G} (t-\tau)\right)
		\mathcal{N}(u(\tau)) \right\|_{L^2} \,d\tau 
\]
and
\[
	I\!I= \int_{t/2}^t \left\| |\nabla|^s  \left(
		\mathcal{D}(t-\tau)-\mathcal{G}(t-\tau) \right)
		\mathcal{N}(u(\tau)) \right\|_{L^2} \,d\tau .
\]
We claim that each term has the desired decay rate.

First, we estimate $I\!I$.
Applying Lemma \ref{lem_dif} with $s_1 = s$, $s_2 = [s]$ and $\gamma = \rho$, we have
\begin{align*}
	I\!I
	&\lesssim \int_{\frac{t}{2}}^t
		\| |\nabla|^{s} (\mathcal{D}(t-\tau) - \mathcal{G}(t-\tau))
			\mathcal{N}(u(\tau))\|_{L^2}\,d\tau \\
	&\lesssim \int_{\frac{t}{2}}^t
		\langle t- \tau \rangle^{-\frac{s-[s]}{2}-\frac{n}{2}\left(\frac{1}{\rho}-\frac{1}{2}\right)-1}
		\| |\nabla|^{[s]} \mathcal{N}(u(\tau)) \|_{L^{\rho}}\, d\tau \\
	&\quad + \int_{\frac{t}{2}}^t e^{-\frac{t-\tau}{4}}
				\| |\nabla|^s \langle \nabla \rangle^{-1} \mathcal{N}(u(\tau)) \|_{L^2}\, d\tau.
\end{align*}
Using the Sobolev inequality \eqref{sobs}, we see that
\begin{align}
\label{esdifII1}
	I\!I &\lesssim \int_{\frac{t}{2}}^t
		\langle t- \tau \rangle^{-\frac{n}{2}\left(\frac{1}{\rho}-\frac{1}{2}\right)-\frac{s-[s]}{2}-1}
		\| |\nabla|^{[s]} \mathcal{N}(u(\tau)) \|_{L^{\rho}}\, d\tau \\
\nonumber
	&\lesssim \| \mathcal{N}(u) \|_{Y(\infty)}
		\int_{\frac{t}{2}}^t
		\langle t- \tau \rangle^{-\frac{n}{2}\left(\frac{1}{\rho}-\frac{1}{2}\right)-\frac{s-[s]}{2}-1}
		\langle \tau \rangle^{-\eta}\, d\tau \\
\nonumber
	&\lesssim \langle t \rangle^{%
			-\frac{n}{2}\left(\frac{1}{r}-\frac{1}{2}\right)-\frac{s}{2} 
				+ \frac{1}{2} - \frac{n}{2r}(p-1)}.%
\end{align}
Next, we give an estimate of $I$.
Applying Lemma \ref{lem_dif} with $s_1=s$, $s_2=0$ and $\gamma \in [1,2]$
determined later, we have
\begin{align*}
	I &\lesssim
		\int_0^{\frac{t}{2}} \langle t-\tau \rangle^{%
			-\frac{n}{2}\left( \frac{1}{\gamma}-\frac{1}{2} \right) - \frac{s}{2} - 1}%
		\| \mathcal{N}(u(\tau)) \|_{L^{\gamma}}\, d\tau
	+\int_0^{\frac{t}{2}} e^{-\frac{t-\tau}{4}}
		\| |\nabla|^s \langle \nabla \rangle^{-1} \mathcal{N}(u(\tau)) \|_{L^2}\,d\tau \\
	&=: I_1 + I_2.
\end{align*}
In the same way as the proof of Lemma \ref{lem_duh}, it is easy to see that
$I_2 \lesssim e^{-\frac{t}{8}} \| \mathcal{N}(u) \|_{Y(\infty)}$.
Also, for $I_1$, taking
$\gamma = r$ if $p>1+\frac{2r}{n}$ and $\gamma = \sigma_1$ if $p=1+\frac{2r}{n}$, $r>1$,
respectively, we can prove
\begin{align}
\label{esdifI1}
	I_1 \lesssim \| \mathcal{N}(u) \|_{Y(\infty)}
		\langle t \rangle^{-\frac{n}{2}\left( \frac{1}{r}-\frac{1}{2} \right) - \frac{s}{2} -1 }.
\end{align}
Combining the estimates \eqref{esdifII1} and \eqref{esdifI1}, we conclude
\begin{align*}
	&\left\| |\nabla|^s \int_0^t \left( \mathcal{D}(t-\tau)\mathcal{N}(u(\tau))
		-\mathcal{G}(t-\tau) \mathcal{N}(u(\tau))\right)\,d\tau \right\|_{L^2}
		\lesssim \langle t \rangle^{%
		-\frac{n}{2}\left( \frac{1}{r}-\frac{1}{2} \right) - \frac{s}{2}
		- \min \{ 1, \frac{n}{2r}(p-1)-\frac{1}{2}\} }.%
\end{align*}
The estimate of the linear part of $u-v$ is obvious from
Lemmas \ref{lem_dif} and \ref{lem_dif2}.
Thus, we obtain the first assertion \eqref{esdifs}.

Secondly, we prove \eqref{esdif}.
As before, we divide the nonlinear term of $u-v$ as
\begin{align*}
	&\left\| \int_0^t (\mathcal{D}(t-\tau) - \mathcal{G}(t-\tau)) \mathcal{N}(u(\tau)) \,d\tau \right\|_{L^2} \\
	&\quad \lesssim
		\int_0^{\frac{t}{2}}
		\| (\mathcal{D}(t-\tau)-\mathcal{G}(t-\tau))\mathcal{N}(u(\tau))\|_{L^2}\, d\tau 
		+ \int_{\frac{t}{2}}^t
		\| (\mathcal{D}(t-\tau) -\mathcal{G}(t-\tau))\mathcal{N}(u(\tau)) \|_{L^2}\, d\tau \\
	&=: I\!I\!I + I\!V.
\end{align*}
Making use of Lemma \ref{lem_dif} with $s_1=s_2=0$ and $\gamma =q$,
and then using the Sobolev embedding
$\| \langle \nabla \rangle^{-1} \psi \|_{L^2} \lesssim \| \psi \|_{L^q}$,
we see that
\begin{align*}
	I\!V & \lesssim
		\int_{\frac{t}{2}}^t \left[ \langle t-\tau \rangle^{%
			-\frac{n}{2}\left( \frac{1}{q} - \frac{1}{2} \right)-1 }%
		\| \mathcal{N}(u(\tau)) \|_{L^q}
		+ e^{-\frac{t-\tau}{4}} \| \langle \nabla \rangle^{-1} \mathcal{N}(u(\tau)) \|_{L^2} 
		\right] \,d\tau \\
	&\lesssim \int_{\frac{t}{2}}^t \langle t-\tau \rangle^{%
			-\frac{n}{2}\left( \frac{1}{q} - \frac{1}{2} \right)-1 }%
		\| \mathcal{N}(u(\tau)) \|_{L^q}\, d\tau.
\end{align*}
Noting $-\frac{n}{2}\left( \frac{1}{q} - \frac{1}{2} \right)-1 = -\frac{3}{2} < -1$,
we further estimate
\begin{align}
\label{esdifIV}
	I\!V &\lesssim
		\| \mathcal{N}(u) \|_{Y(\infty)}
		\int_{\frac{t}{2}}^t \langle t-\tau \rangle^{%
			-\frac{n}{2}\left( \frac{1}{q} - \frac{1}{2} \right)-1 }%
			\langle \tau \rangle^{-\frac{n}{2}\left( \frac{p}{r} -\frac{1}{q} \right)}\,d\tau \\
\nonumber
	&\lesssim
		\langle t \rangle^{%
		-\frac{n}{2} \left( \frac{1}{r} - \frac{1}{2} \right) + \frac{1}{2} - \frac{n}{2r}(p-1)}.%
\end{align}
On the other hand, applying Lemma \ref{lem_dif} with $s_1=s_2=0$ and
$\gamma \in [1,2]$ determined later, we deduce
\begin{align*}
	I\!I\!I &\lesssim \int_0^{\frac{t}{2}}
		\langle t-\tau \rangle^{-\frac{n}{2}\left( \frac{1}{\gamma} - \frac{1}{2} \right)-1 }
		\| \mathcal{N}(u(\tau)) \|_{L^{\gamma}}\,d\tau
		+ \int_0^{\frac{t}{2}} e^{-\frac{t-\tau}{4}}
			\| \langle \nabla \rangle^{-1} \mathcal{N}(u(\tau)) \|_{L^2}\,d\tau \\
	&=: I\!I\!I_1 + I\!I\!I_2.
\end{align*}
In the same manner as the proof of Lemma \ref{lem_duh},
we see that
the term $I\!I\!I_2$ is bounded by $e^{-\frac{t}{8}} \| \mathcal{N}(u) \|_{Y(\infty)}$.
Furthermore, taking $\gamma = r$ if $p>1+\frac{2r}{n}$ and
$\gamma = \sigma_1$ if $p=1+\frac{2r}{n}$, $r>1$, respectively,
we have
\begin{align}
\label{esdifIII}
	I\!I\!I_1 &\lesssim
	\| \mathcal{N}(u) \|_{Y(\infty)}
	\langle t \rangle^{-\frac{n}{2} \left( \frac{1}{r} - \frac{1}{2} \right)-1}.
\end{align}
Summing up the estimates \eqref{esdifIV} and \eqref{esdifIII},
we conclude
\begin{align*}
	\left\| \int_0^t (\mathcal{D}(t-\tau) - \mathcal{G}(t-\tau)) \mathcal{N}(u(\tau)) \,d\tau \right\|_{L^2}
	\lesssim
		\langle t \rangle^{%
		-\frac{n}{2} \left( \frac{1}{r} - \frac{1}{2} \right)
		-\min\{ 1, \frac{n}{2r}(p-1) - \frac{1}{2}\} }.%
\end{align*}
The linear part of $u-v$ is easily estimated
from Lemmas \ref{lem_dif} and \ref{lem_dif2} and
we reach the second assertion \eqref{esdif}.

Finally, we prove \eqref{esdifa}.
We take $\beta$ satisfying
$n(\frac1r-\frac12) < \beta \le \alpha$.
Then, by the interpolation
\begin{align*}
	\| \langle \cdot \rangle^{\beta} \psi \|_{L^q}
	\le \| \langle \cdot \rangle^{\alpha} \psi \|_{L^q}^{\frac{\beta}{\alpha}}
		\| \psi \|_{L^q}^{1-\frac{\beta}{\alpha}},
\end{align*}
we deduce that the solution $u$ satisfies
\begin{align*}
	\langle t \rangle^{\zeta_{\beta}} \| \langle \cdot \rangle^{\beta} \mathcal{N} (u(t)) \|_{L^q}
	\lesssim
		\left( \langle t \rangle^{\zeta} 
		\| \langle \cdot \rangle^{\alpha} \mathcal{N} (u(t)) \|_{L^q} \right)^{\frac{\beta}{\alpha}}
		\left( \langle t \rangle^{\frac{n}{2}\left(\frac{p}{r}-\frac{1}{q}\right)}
		\| \mathcal{N} (u(t)) \|_{L^q} \right)^{1-\frac{\beta}{\alpha}}
	\lesssim 1
\end{align*} 
with
$\zeta_{\beta} = \frac{n}{2r}(p-1)-\frac12 + \frac{n}{2}(\frac1r - \frac12) - \frac{\beta}{2}$.
From Lemma \ref{lem_dif} with
$\nu = q$ and $\gamma \in [1,2]$ determined later,
we obtain
\begin{align*}
	&\left\| |\cdot|^{\beta} \int_0^t
		( \mathcal{D}(t-\tau)-\mathcal{G}(t-\tau)) \mathcal{N}(u(\tau)) \,d\tau \right\|_{L^2}\\
	&\quad \lesssim
	\int_0^t \langle t-\tau \rangle^{%
		-\frac{n}{2}\left( \frac{1}{\gamma}-\frac{1}{2} \right) + \frac{\beta}{2}-1 }%
		\| \mathcal{N}(u(\tau)) \|_{L^{\gamma}}\, d\tau +  \int_0^t \langle t-\tau \rangle^{%
		-\frac{n}{2}\left( \frac{1}{q} - \frac{1}{2} \right) -1}%
		\| |\cdot |^{\beta} \mathcal{N}(u(\tau)) \|_{L^q}\,d\tau \\
	&\qquad + \int_0^t e^{-\frac{t-\tau}{4}}
		\| \langle \cdot \rangle^{\beta} \langle \nabla \rangle^{-1}
				\mathcal{N}(u(\tau)) \|_{L^2}\,d\tau \\
	&\quad =: V_1 + V_2 + V_3.
\end{align*}
Lemma \ref{lem_sob} implies
\[
	V_3 \lesssim \int_0^t e^{-\frac{t-\tau}{4}}
		\| \langle \cdot \rangle^{\beta} \mathcal{N}(u(\tau)) \|_{L^q}\, d\tau
\]
and hence, we have
\[
	V_2 + V_3 \lesssim \int_0^t \langle t-\tau \rangle^{%
		-\frac{n}{2}\left( \frac{1}{q} - \frac{1}{2} \right) -1}%
		\| \langle \cdot \rangle^{\beta} \mathcal{N}(u(\tau)) \|_{L^q}\,d\tau.
\]
Noting
$-\frac{n}{2}(\frac{1}{q}-\frac{1}{2})-1 = -\frac{3}{2}$,
we proceed the estimate as
\begin{align*}
	V_2 + V_3
	& \lesssim
		\| \mathcal{N}(u)\|_{Y(\infty)} \int_0^t
			\langle t-\tau \rangle^{-\frac{3}{2}}
			\langle \tau \rangle^{-\zeta_{\beta}}\,d\tau \\
	& \lesssim
		\langle t \rangle^{%
			-\frac{n}{2}\left( \frac{1}{r}-\frac{1}{2} \right) + \frac{\beta}{2}%
			-\frac{n}{2r}(p-1) + \frac{1}{2}}%
	+
	\begin{cases}
		\langle t \rangle^{%
			-\frac{n}{2}\left( \frac{1}{r}-\frac{1}{2} \right) + \frac{\beta}{2}%
			-\frac{n}{2r}(p-1)}%
		&(\zeta_{\beta} <1),\\
		\langle t \rangle^{-\frac{3}{2}} \log (1+t)
		&(\zeta_{\beta} =1),\\
		\langle t \rangle^{-\frac{3}{2}}
		&(\zeta_{\beta} > 1).
	\end{cases}
\end{align*}
For $V_1$, as in the proof of Lemma \ref{lem_duh},
taking $\gamma = r$ if $p>1+\frac{2r}{n}$ and
$\gamma = \sigma_1$ if $p=1+\frac{2r}{n}$, $r>1$, respectively,
we can see that
\begin{align*}
	V_1 \lesssim
		\langle t \rangle^{%
			-\frac{n}{2}\left( \frac{1}{r}-\frac{1}{2} \right) + \frac{\beta}{2} -1 }.%	
\end{align*}
Consequently, we obtain
\[
	 \left\| |\cdot|^{\beta} \int_0^t
		( \mathcal{D}(t-\tau)-\mathcal{G}(t-\tau)) \mathcal{N}(u(\tau)) \,d\tau \right\|_{L^2}
	\lesssim \langle t \rangle^{%
		-\frac{n}{2}\left( \frac{1}{r}-\frac{1}{2} \right) + \frac{\beta}{2}
		- \min\{ 1, \frac{n}{2r}(p-1) - \frac{1}{2} \}},%
\]
which shows the third assertion \eqref{esdifa} and
finishes the proof of Proposition \ref{prop_asy}.

\subsection{Proof of Theorem \ref{thm_asy}}
By virtue of Proposition \ref{prop_asy}, the proof of Theorem \ref{thm_asy}
reduces to the analysis of the asymptotic behavior of solutions to
the inhomogeneous heat equation \eqref{inhe} with the initial data
$\varepsilon (u_0 + u_1)$.
In this subsection, we prove the following:
\begin{proposition}\label{prop_asy2}
Under the assumption of Theorem \ref{thm_asy},
Let $\beta$ satisfy
$n(\frac1r-\frac12) < \beta \le \alpha$.
Then, the solution $v$ to the equation \eqref{inhe} with the initial data
$\varepsilon (u_0 + u_1)$ satisfies
the following asymptotic behavior:
When $r>1$, we have
\begin{align}
\label{esdifhes}
	\| |\nabla|^s (v(t) - \varepsilon \mathcal{G}(t) (u_0+u_1) )\|_{L^2}
	&\lesssim
	\langle t \rangle^{-\frac{n}{2}\left( \frac{1}{r}-\frac{1}{2} \right) - \frac{s}{2}%
		-\min\{ \frac{n}{2}\left(1-\frac{1}{r}\right), \frac12, \frac{n}{2r}(p-1)-1 \} %
		+ \varsigma}, \\
\label{esdifhe}
	\| v(t) - \varepsilon \mathcal{G}(t) (u_0+u_1) \|_{L^2}
	&\lesssim \langle t \rangle^{-\frac{n}{2}\left( \frac{1}{r}-\frac{1}{2} \right)
	-\min\{ \frac{n}{2}\left(1-\frac{1}{r}\right), \frac12, \frac{n}{2r}(p-1)-1 \} %
		+ \varsigma}, \\
\label{esdifhea}
	\| |\cdot|^{\beta} (v(t) - \varepsilon \mathcal{G}(t) (u_0+u_1) ) \|_{L^2}
	&\lesssim \langle t \rangle^{-\frac{n}{2}\left( \frac{1}{r}-\frac{1}{2} \right) + \frac{\beta}{2}
	-\min\{ \frac{n}{2}\left(1-\frac{1}{r}\right), \frac12, \frac{n}{2r}(p-1)-1 \} %
		+ \varsigma}
\end{align}
for $t \ge 1$.
When $r=1$, we have
\begin{align}
\label{ot1}
	\| v(t) - \theta G(t) \|_{L^m} \lesssim \langle t \rangle^{%
		-\frac{n}{2}\left(1-\frac{1}{m}\right) %
		-\min\{ \frac{\alpha}{2} - \frac{n}{4}, \frac12, \frac{n}{2}(p-1)-1 \} %
		+ \varsigma}
\end{align}
for $t \ge 1$.
\end{proposition}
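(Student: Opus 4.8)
The plan is to treat the two regimes separately, in both cases reducing everything to the explicit Duhamel representation $v(t)=\mathcal{G}(t)\varepsilon(u_0+u_1)+\int_0^t\mathcal{G}(t-\tau)\mathcal{N}(u(\tau))\,d\tau$ and to the spacetime bounds on $\mathcal{N}(u)$ recorded in $\|\mathcal{N}(u)\|_{Y(\infty)}\lesssim\|u\|_{X(\infty)}^p\lesssim\varepsilon^p$ (Lemma \ref{lem_nl}). Throughout I would use the remainder-free analogues of Lemma \ref{lem_lin} for the Gaussian semigroup, i.e. $\||\nabla|^{s_1}\mathcal{G}(t)\psi\|_{L^2}\lesssim t^{-\frac n2(\frac1\gamma-\frac12)-\frac{s_1-s_2}2}\||\nabla|^{s_2}\psi\|_{L^\gamma}$ together with the corresponding weighted bound, which are proved exactly as in Lemma \ref{lem_lin} but without the high-frequency exponential term.

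For $r>1$ the subtracted profile is precisely the linear flow, so $v(t)-\varepsilon\mathcal{G}(t)(u_0+u_1)=\int_0^t\mathcal{G}(t-\tau)\mathcal{N}(u(\tau))\,d\tau$, and I would split this into $\int_0^{t/2}$ and $\int_{t/2}^t$ as in Lemma \ref{lem_duh} and Proposition \ref{prop_asy}. On the near piece $\int_{t/2}^t$ I use the Gaussian $L^q$--$L^2$ bound with $q=\tfrac{2n}{n+2}$ (so the kernel contributes $\langle t-\tau\rangle^{-1/2}$) together with $\|\mathcal{N}(u)\|_{L^q}\lesssim\langle\tau\rangle^{-\frac n2(\frac pr-\frac1q)}$, yielding the gain $\frac n{2r}(p-1)-1$. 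On the far piece $\int_0^{t/2}$ I use the $L^\gamma$--$L^2$ bound with $\gamma=\sigma_1$, the smallest exponent in which $Y(\infty)$ controls $\mathcal{N}(u)$; the kernel then decays like $\langle t\rangle^{-\frac n2(\frac1{\sigma_1}-\frac12)}$ and, when $\int_0^{t/2}\langle\tau\rangle^{-\frac n2(\frac pr-\frac1{\sigma_1})}\,d\tau$ converges, this gains exactly $\frac n2(\frac1{\sigma_1}-\frac1r)=\min\{\frac n2(1-\frac1r),\frac12\}$ over the linear rate (the two alternatives corresponding to $\sigma_1=1$ and $\sigma_1=\frac{nr}{n+r}$). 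When that time integral fails to converge the far piece reproduces the gain $\frac n{2r}(p-1)-1$, and at the borderline a logarithm appears, absorbed into $\langle t\rangle^{\varsigma}$. Taking the minimum of the three competing gains gives \eqref{esdifhe}; the estimates \eqref{esdifhes} and \eqref{esdifhea} follow identically, using $s_1=s$, $s_2=0$ (or $s_2=[s]$ near the diagonal) and the weighted Gaussian bound with $\nu=q$, respectively.

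For $r=1$ the profile is the single Gaussian $\theta G(t)$, so I would decompose $v(t)-\theta G(t)$ into a linear and a nonlinear contribution. For the linear part $\varepsilon\mathcal{G}(t)(u_0+u_1)-\varepsilon(\int(u_0+u_1))G(t)$ I use the first-moment expansion of the heat kernel: expanding $\hat G$ to first order gains $\langle t\rangle^{-1/2}$ at the cost of a first moment of the data, while $H^{0,\alpha}$ data only carries moments $|x|^\beta(u_0+u_1)\in L^1$ for $\beta<\alpha-\frac n2$ (by Cauchy--Schwarz against $\langle x\rangle^{-\alpha}$), so optimizing $\beta$ produces the gain $\min\{\frac12,\frac\alpha2-\frac n4\}$. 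For the nonlinear part I first note that $\theta$ in \eqref{theta} is well defined because $\|\mathcal{N}(u(\tau))\|_{L^1}\lesssim\|u(\tau)\|_{L^p}^p\lesssim\langle\tau\rangle^{-\frac n2(p-1)}$ is time-integrable (here $\frac n2(p-1)>1$ since $p>1+\frac2n$). Then I write $\int_0^t\mathcal{G}(t-\tau)\mathcal{N}(u)\,d\tau-(\int_0^\infty\!\!\int\mathcal{N}(u))G(t)$ as $\int_0^t[\mathcal{G}(t-\tau)\mathcal{N}(u)-(\int\mathcal{N}(u))G(t)]\,d\tau$ minus the tail $(\int_t^\infty\!\!\int\mathcal{N}(u))G(t)$; the first term is handled by the same moment and time-shift expansion as the linear part (gaining $\frac12$), while the tail contributes $\|G(t)\|_{L^m}\int_t^\infty\langle\tau\rangle^{-\frac n2(p-1)}\,d\tau$, giving the gain $\frac n2(p-1)-1$. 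The minimum of these gains is \eqref{ot1}.

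I expect the main obstacle to be the $r=1$ nonlinear term: making the subtraction of $(\int_0^\infty\!\!\int\mathcal{N}(u))G(t)$ quantitative requires a careful two-scale expansion of $\mathcal{G}(t-\tau)\mathcal{N}(u(\tau))$ around $G(t)$ that simultaneously controls the spatial-moment error (since $\mathcal{N}(u)$ is not a point mass) and the time-shift error (from replacing $t-\tau$ by $t$), while keeping every resulting time integral convergent right up to the threshold exponents where the logarithmic factors hidden in $\varsigma$ are generated. The bookkeeping of which of the three gains is active in which parameter range, already visible in the $r>1$ far-piece dichotomy, is the delicate part of the argument.
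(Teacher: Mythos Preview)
Your proposal is correct and follows essentially the same route as the paper. For $r>1$ your split into $\int_0^{t/2}$ and $\int_{t/2}^t$, with $\gamma=\sigma_1$ on the far piece and $\gamma\in\{q,\rho,\sigma_2\}$ on the near piece, is exactly the paper's argument, and your identification $\tfrac{n}{2}(\tfrac{1}{\sigma_1}-\tfrac{1}{r})=\min\{\tfrac{n}{2}(1-\tfrac{1}{r}),\tfrac{1}{2}\}$ is the mechanism behind the first two entries of the minimum.

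For $r=1$ your outline matches the paper's strategy (linear part via a moment expansion with $\delta<\alpha-\tfrac{n}{2}$, nonlinear part via subtraction of $\theta_2 G(t)$), but your two-term nonlinear decomposition is slightly coarser than what is actually needed. The integrand $\mathcal{G}(t-\tau)\mathcal{N}(u)-(\int\mathcal{N}(u))G(t)$ cannot be handled by moment and time-shift expansion uniformly on $[0,t]$: when $\tau\in[t/2,t]$ the kernel $\mathcal{G}(t-\tau)$ is nearly singular and no comparison with $G(t)$ is available. The paper therefore splits this term further into four pieces: on $[0,t/2]$ one separates the spatial-moment error $\mathcal{G}(t-\tau)\mathcal{N}(u)-G(t-\tau)\!\int\!\mathcal{N}(u)$ from the time-shift error $(G(t-\tau)-G(t))\!\int\!\mathcal{N}(u)$, while on $[t/2,t]$ one estimates $\mathcal{G}(t-\tau)\mathcal{N}(u)$ and $G(t)\!\int\!\mathcal{N}(u)$ separately by direct decay, each yielding the gain $\tfrac{n}{2}(p-1)-1$. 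This is exactly the ``careful two-scale expansion'' you anticipated as the main obstacle; once you insert that extra split at $t/2$ your argument goes through.
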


To prove this proposition, we first prepare the following lemma.
%%%%%%%%%%%%%%%%%%%%%%%%%%%%%%%%%%%%%%%%%%%%%%%%%
\begin{lemma}\label{lem_he}
Let
$s_1 \ge s_2 \ge 0$, $\beta \ge 0$ and $\gamma, \nu \in [1,2]$.
Then, we have
\begin{align*}
	\| |\nabla|^{s_1} \mathcal{G}(t) \psi \|_{L^2}
	&\lesssim \langle t \rangle^{%
		-\frac{n}{2}\left( \frac{1}{\gamma}-\frac{1}{2}\right) - \frac{s_1-s_2}{2}}%
		\| |\nabla|^{s_2} \psi \|_{L^{\gamma}},\\
	\| |\cdot|^{\beta} \mathcal{G}(t)\psi \|_{L^2}
	&\lesssim \langle t \rangle^{-\frac{n}{2}\left(\frac{1}{\gamma}-\frac{1}{2}\right)+\frac{\beta}{2}}
		\| \psi \|_{L^{\gamma}}
		+ \langle t \rangle^{-\frac{n}{2}\left(\frac{1}{\nu}-\frac{1}{2}\right)}
		\| |\cdot|^{\nu} \psi \|_{L^{\nu}}
\end{align*}
for $t\ge 1$.
\end{lemma}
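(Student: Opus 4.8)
The plan is to mirror the proof of Lemma~\ref{lem_lin}, exploiting the fact that the heat multiplier $e^{-t|\xi|^2}$ decays at \emph{every} frequency. Consequently the high-frequency exponential remainders $e^{-t/4}\|\cdots\|$ appearing in \eqref{est_der} and \eqref{est_wei} are absent here, and the two desired bounds follow from the explicit Gaussian $G(t,x)=(4\pi t)^{-n/2}e^{-|x|^2/(4t)}$ together with a scaling argument. Throughout we use $t\ge 1$, so that $\langle t\rangle\sim t$ and the powers of $t$ produced by scaling may be read directly as powers of $\langle t\rangle$.

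For the first estimate I would apply the Plancherel theorem to write $\||\nabla|^{s_1}\mathcal{G}(t)\psi\|_{L^2}=\||\xi|^{s_1}e^{-t|\xi|^2}\hat\psi\|_{L^2}$, split $|\xi|^{s_1}=|\xi|^{s_1-s_2}|\xi|^{s_2}$, and apply Hölder's inequality with the exponents $\frac{2\gamma}{2-\gamma}$ and $\frac{\gamma}{\gamma-1}$ (whose reciprocals sum to $\frac12$). The factor $\||\xi|^{s_2}\hat\psi\|_{L^{\gamma/(\gamma-1)}}$ is controlled by $\||\nabla|^{s_2}\psi\|_{L^\gamma}$ through the Hausdorff--Young inequality, while the Gaussian factor is computed by the change of variables $\xi=t^{-1/2}\eta$, giving $\||\xi|^{s_1-s_2}e^{-t|\xi|^2}\|_{L^{2\gamma/(2-\gamma)}}\lesssim t^{-\frac{s_1-s_2}{2}-\frac{n}{2}(\frac1\gamma-\frac12)}$, since $1-\frac1k=\frac1\gamma-\frac12$ for $k=\frac{2\gamma}{2-\gamma}$. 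Combining these yields the first bound.

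For the weighted estimate I would write $\mathcal{G}(t)\psi=G(t)\ast\psi$ and use $|x|^\beta\lesssim|x-y|^\beta+|y|^\beta$ exactly as in \eqref{ine_K}, so that by Young's inequality (with the same exponents $\frac{2\gamma}{3\gamma-2}$ and $\frac{2\nu}{3\nu-2}$ as there) the left-hand side is bounded by $\||\cdot|^\beta G(t)\|_{L^{2\gamma/(3\gamma-2)}}\|\psi\|_{L^\gamma}+\|G(t)\|_{L^{2\nu/(3\nu-2)}}\||\cdot|^\beta\psi\|_{L^\nu}$. The two kernel norms are then evaluated by the scaling $x=\sqrt{t}\,y$, which gives $\||\cdot|^\beta G(t)\|_{L^k}\lesssim t^{\frac{\beta}{2}-\frac{n}{2}(1-\frac1k)}$; taking $k=\frac{2\gamma}{3\gamma-2}$ (so that $1-\frac1k=\frac1\gamma-\frac12$) and $k=\frac{2\nu}{3\nu-2}$ with $\beta=0$ reproduces the two stated time weights.

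The computation is essentially routine; there is no genuine obstacle beyond the bookkeeping of the Hölder and Young exponents and verifying the identities $1-\frac1k=\frac1\gamma-\frac12$ that align the scaling powers with the claimed rates. The one structural simplification compared with Lemma~\ref{lem_lin} is that, the multiplier being a genuine Gaussian, neither a cut-off into low and high frequencies nor a fractional Leibniz argument is needed.
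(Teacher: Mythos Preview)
Your proposal is correct and follows exactly the approach the paper indicates: the first estimate via Plancherel, H\"older, and Hausdorff--Young as in the proof of Lemma~\ref{lem_lin}, and the second via the convolution representation together with the kernel bound $\||\cdot|^{\beta}G(t)\|_{L^k}\lesssim t^{\frac{\beta}{2}-\frac{n}{2}(1-\frac{1}{k})}$, which is precisely what the paper invokes. One small slip: in your first computation the relevant identity is $\tfrac{1}{k}=\tfrac{1}{\gamma}-\tfrac{1}{2}$ for $k=\tfrac{2\gamma}{2-\gamma}$ (not $1-\tfrac{1}{k}$), since the Fourier-side scaling produces the exponent $-\tfrac{n}{2k}$; your stated conclusion is nonetheless correct.
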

%%%%%%%%%%%%%%%%%%%%%%%%%%%%%%%%%%%%%%%%%%%%%%%%%
\begin{proof}
The first assertion is trivial.
Noting
$\| |\cdot|^{\beta} G(t) \|_{L^{k}} \lesssim t^{-\frac{n}{2}\left(1-\frac{1}{k} \right) + \frac{\beta}{2}}$,
we can easily prove the second one in the same way as Lemma \ref{lem_lin}.
\end{proof}

\begin{proof}[Proof of Proposition \ref{prop_asy2}]
For simplicity, we treat only higher dimensional cases $n\ge 2$.
The one-dimensional case can be proved in a similar way.
At first we shall consider the case $r>1$.
In this case, it suffices to estimate
\[
	v(t) - \varepsilon \mathcal{G} (u_0 + u_1) =
	\int_0^t \mathcal{G}(t-\tau) \mathcal{N}(u(\tau)) \,d\tau.
\]
Let us start with
\begin{align*}
	\left\| |\nabla|^s \int_0^t \mathcal{G}(t-\tau) \mathcal{N}(u(\tau)) \, d\tau \right\|_{L^2}
	&\lesssim
	\int_0^{\frac{t}{2}} \| |\nabla|^s \mathcal{G}(t-\tau) \mathcal{N}(u(\tau)) \|_{L^2}\,d\tau \\
	&\quad + \int_{\frac{t}{2}}^t
	\| |\nabla|^s \mathcal{G}(t-\tau) \mathcal{N}(u(\tau)) \|_{L^2}\,d\tau \\
	&=: I + I\!I.
\end{align*}
Applying Lemma \ref{lem_he} with $s_1=s$, $s_2=0$ and $\gamma=\sigma_1$,
we have
\begin{align*}
	I &\lesssim
		\int_0^{\frac{t}{2}} \langle t-\tau \rangle^{%
			-\frac{n}{2}\left( \frac{1}{\sigma_1} - \frac{1}{2} \right) - \frac{s}{2}}%
			\| \mathcal{N}(u(\tau)) \|_{L^{\sigma_1}}\,d\tau \\
	&\lesssim
		\| \mathcal{N}(u) \|_{Y(\infty)}
		\langle t \rangle^{-\frac{n}{2}\left( \frac{1}{\sigma_1} - \frac{1}{2} \right) - \frac{s}{2}}
		\int_0^{\frac{t}{2}}
			\langle \tau \rangle^{-\frac{n}{2}\left( \frac{p}{r}-\frac{1}{\sigma_1}\right) }\,d\tau\\
	&\lesssim
		\langle t \rangle^{-\frac{n}{2}\left(\frac{1}{\sigma_1}-\frac{1}{2}\right)-\frac{s}{2}%
			-\min\{ 0, \frac{n}{2}\left(\frac{p}{r}-\frac{1}{\sigma_1}\right)-1\}
			+\varsigma} \\
	&\lesssim
		\langle t \rangle^{-\frac{n}{2}\left( \frac{1}{r}-\frac{1}{2} \right) - \frac{s}{2}
	-\min\{ \frac{n}{2}\left(1-\frac{1}{r}\right), \frac12, \frac{n}{2r}(p-1)-1 \} %
		+ \varsigma}.
\end{align*}
For $I\!I$, we apply Lemma \ref{lem_he} with
$s_1 = s-[s]$, $s_2 = [s]$ and $\gamma = \rho$ to obtain
\begin{align*}
	I\!I &\lesssim
		\int_{\frac{t}{2}}^t
		\langle t-\tau \rangle^{%
			-\frac{n}{2}\left( \frac{1}{\rho}-\frac{1}{2}\right)-\frac{s-[s]}{2}}%
			\| |\nabla|^{[s]} \mathcal{N}(u(\tau)) \|_{L^{\rho}}\,d\tau \\
		&\lesssim \| \mathcal{N}(u) \|_{Y(\infty)}
			\langle t \rangle^{-\eta}
			\int_{\frac{t}{2}}^t
		\langle t-\tau \rangle^{%
			-\frac{n}{2}\left( \frac{1}{\rho}-\frac{1}{2}\right)-\frac{s-[s]}{2}}\,d\tau \\
		&\lesssim
		\langle t \rangle^{%
			-\frac{n}{2}\left(\frac{1}{r}-\frac{1}{2}\right) - \frac{s}{2}
			-\frac{n}{2r}(p-1) + 1}.%
\end{align*}
Summing up the above estimates, we have \eqref{esdifhes}.

Next, we prove \eqref{esdifhe}.
First we have
\begin{align*}
	\left\| \int_0^t \mathcal{G}(t-\tau) \mathcal{N}(u(\tau)) \, d\tau \right\|_{L^2}
	&\lesssim \int_0^{\frac{t}{2}} \| \mathcal{G}(t-\tau)\mathcal{N}(u(\tau))\|_{L^2}\,d\tau \\
	&\quad + \int_{\frac{t}{2}}^t \| \mathcal{G}(t-\tau)\mathcal{N}(u(\tau))\|_{L^2}\,d\tau \\
	&=: I\!I\!I + I\!V.
\end{align*}
Lemma \ref{lem_he} with $s_1=s_2=0$ and $\gamma = \sigma_1$ leads to
\begin{align*}
	I\!I\!I & \lesssim
		\int_0^{\frac{t}{2}}
		\langle t -\tau \rangle^{-\frac{n}{2}\left( \frac{1}{\sigma_1}-\frac{1}{2} \right)}
		\| \mathcal{N}(u(\tau)) \|_{L^{\sigma_1}}\, d\tau \\
		&\lesssim
		\| \mathcal{N}(u) \|_{Y(\infty)}
		\langle t \rangle^{-\frac{n}{2}\left( \frac{1}{\sigma_1}-\frac{1}{2} \right)}
		\int_0^{\frac{t}{2}}
		\langle \tau \rangle^{-\frac{n}{2}\left( \frac{p}{r}-\frac{1}{\sigma_1} \right)}\,d\tau\\
		&\lesssim
		\langle t \rangle^{-\frac{n}{2}\left( \frac{1}{r}-\frac{1}{2} \right)
	-\min\{ \frac{n}{2}\left(1-\frac{1}{r}\right), \frac12, \frac{n}{2r}(p-1)-1 \} %
		+ \varsigma}.
\end{align*}
Also, taking $s_1=s_2=0$ and $\gamma = \sigma_2$ in Lemma \ref{lem_he},
we see that
\begin{align*}
	I\!V &\lesssim
		\int_{\frac{t}{2}}^t
		\langle t -\tau \rangle^{-\frac{n}{2}\left( \frac{1}{\sigma_2} - \frac{1}{2} \right)}
		\| \mathcal{N}(u(\tau)) \|_{L^{\sigma_2}} \,d\tau \\
	&\lesssim
		\| \mathcal{N}(u) \|_{Y(\infty)}
		\langle t \rangle^{-\frac{n}{2}\left( \frac{p}{r}-\frac{1}{\sigma_2}\right) }
		\int_{\frac{t}{2}}^t
		\langle t -\tau \rangle^{-\frac{n}{2}\left( \frac{1}{\sigma_2} - \frac{1}{2} \right)}\,d\tau\\
	&\lesssim
		\langle t \rangle^{%
			-\frac{n}{2}\left( \frac{1}{r}-\frac{1}{2}\right)
			-\frac{n}{2r}(p-1) + 1},%
\end{align*}
since $-\frac{n}{2}( \frac{1}{\sigma_2} - \frac{1}{2} )>-1$.
Combining the estimates of $I\!I\!I$ and $I\!V$, we reach the estimate \eqref{esdifhe}.

Finally, we give a proof of \eqref{esdifhea}.
\begin{align*}
	\int_0^t \| |\cdot|^{\beta} \mathcal{G}(t-\tau) \mathcal{N}(u(\tau)) \|_{L^2}\,d\tau
	&\lesssim
		\int_0^{\frac{t}{2}}
		\| |\cdot|^{\beta} \mathcal{G}(t-\tau) \mathcal{N}(u(\tau)) \|_{L^2}\,d\tau\\
	&\quad
		+\int_{\frac{t}{2}}^t
		\| |\cdot|^{\beta} \mathcal{G}(t-\tau) \mathcal{N}(u(\tau)) \|_{L^2}\,d\tau \\
	&=: V + V\!I.
\end{align*}
For $V$, applying Lemma \ref{lem_he} with
$\gamma=\sigma_1$ and $\nu=q$,
we have
\begin{align*}
	V&\lesssim
	\int_0^{\frac{t}{2}} \langle t-\tau \rangle^{%
			-\frac{n}{2}\left(\frac{1}{\sigma_1}-\frac{1}{2}\right)+\frac{\beta}{2}}%
			\| \mathcal{N}(u) \|_{L^{\sigma_1}}\,d\tau
		+ \int_0^{\frac{t}{2}}
			\langle t - \tau \rangle^{-\frac{n}{2}\left( \frac{1}{q}-\frac{1}{2} \right)}
			\| |\cdot|^{\beta} \mathcal{N}(u) \|_{L^q} \,d\tau \\
	&\lesssim
		\| \mathcal{N}(u) \|_{Y(\infty)}
		\langle t \rangle^{-\frac{n}{2}\left(\frac{1}{\sigma_1}-\frac{1}{2}\right)+\frac{\beta}{2}}%
		\int_0^{\frac{t}{2}}
			\langle \tau \rangle^{-\frac{n}{2}\left( \frac{p}{r}-\frac{1}{\sigma_1}\right)}\,d\tau \\
	&\quad + \| \mathcal{N}(u) \|_{Y(\infty)}
		\langle t \rangle^{-\frac{n}{2}\left( \frac{1}{q}-\frac{1}{2} \right)}
		\int_0^{\frac{t}{2}}
		\langle \tau \rangle^{-\zeta_{\beta}} \,d\tau \\
	&\lesssim
		\langle t \rangle^{-\frac{n}{2}\left( \frac{1}{r}-\frac{1}{2} \right) + \frac{\beta}{2}
	-\min\{ \frac{n}{2}\left(1-\frac{1}{r}\right), \frac12, \frac{n}{2r}(p-1)-1 \} %
		+ \varsigma}.
\end{align*}
For $V\!I$, letting
$\gamma=\sigma_2$ and $\nu = q$
in Lemma \ref{lem_he}, we see that
\begin{align*}
	V\!I&\lesssim
	\int_{\frac{t}{2}}^t
		\langle t - \tau \rangle^{%
			-\frac{n}{2}\left(\frac{1}{\sigma_2}-\frac{1}{2}\right)+\frac{\beta}{2}}%
		\| \mathcal{N}(u(\tau)) \|_{L^{\sigma_2}}\,d\tau \\
	&\quad
		+ \int_{\frac{t}{2}}^t
		\langle t- \tau \rangle^{%
			-\frac{n}{2}\left( \frac{1}{q}-\frac{1}{2}\right)}%
		\| |\cdot|^{\beta} \mathcal{N}(u(\tau)) \|_{L^{q}}\,d\tau \\
	&\lesssim
		\| \mathcal{N}(u) \|_{Y(\infty)}
			\langle t \rangle^{-\frac{n}{2}\left(\frac{p}{r}-\frac{1}{\sigma_2}\right)}
		\int_{\frac{t}{2}}^t \langle t-\tau \rangle^{%
			-\frac{n}{2}\left(\frac{1}{\sigma_2}-\frac{1}{2}\right)+\frac{\beta}{2}}\,d\tau \\
	&\quad +
		\| \mathcal{N}(u) \|_{Y(\infty)}
			\langle t \rangle^{-\zeta_{\beta}}
			\int_{\frac{t}{2}}^t
			\langle t-\tau \rangle^{%
				-\frac{n}{2}\left(\frac{1}{q}-\frac{1}{2}\right)}\,d\tau \\
	&\lesssim
		\langle t \rangle^{%
			-\frac{n}{2}\left(\frac{1}{r}-\frac{1}{2}\right)+\frac{\beta}{2}%
			-\frac{n}{2r}(p-1) + 1}.%
\end{align*}
Summing up the above estimates, we have \eqref{esdifhea} and complete the proof
when $r>1$.

Next, we give a proof of the case $r=1$.
We put
\[
	\theta = \theta_1 + \theta_2,\quad
	\theta_1 = \varepsilon \int_{\mathbb{R}^n} (u_0+u_1)\,dx,\quad
	\theta_2 = \int_0^{\infty}\int_{\mathbb{R}^n} \mathcal{N}(u(\tau))\,dxd\tau.
\]
We claim that for $m \in [1,\infty]$, it follows that
\begin{align}
\label{r1asy0}
	\| \varepsilon \mathcal{G}(t) (u_0+u_1) - \theta_1 G(t) \|_{L^m}
	= o (t^{-\frac{n}{2}\left(1-\frac{1}{m}\right)})
\end{align}
as $t\to \infty$ for $u_0+u_1\in L^1$ and
\begin{align}
\label{r1asy1}
	\| \varepsilon \mathcal{G}(t) (u_0+u_1) - \theta_1 G(t) \|_{L^m}
	\lesssim \langle t \rangle^{-\frac{n}{2}\left(1-\frac{1}{m}\right)%
		-\min\{ \frac{1}{2}, \frac{\delta}{2} \}}
	\| \langle \cdot \rangle^{\delta} (u_0+u_1) \|_{L^1}
\end{align}
for $t\ge 1$ and $u_0+u_1 \in H^{0,\alpha}$,
where
$\delta$ is an arbitrary number satisfying
$0 < \delta < \alpha - \frac{n}{2}$.
Here we also note that
$\| \langle \cdot \rangle^{\delta} \phi \|_{L^1}\lesssim \| \phi \|_{H^{0,\alpha}}$.
To prove \eqref{r1asy0}, we first consider the case $m=1$.
We write $\phi = \varepsilon (u_0+u_1)$ and have
\begin{align*}
	&\int_{\mathbb{R}^n} \left|
		\int_{\mathbb{R}^n} ( G(t, x-y)-G(t,x) ) \phi (y)\,dy \right| \,dx\\
	&\quad \lesssim
	\int_{\mathbb{R}^n} \left|
		\int_{|y|< \varrho t^{1/2}} ( G(t, x-y)-G(t,x) ) \phi (y)\,dy \right| \,dx\\
	&\qquad +\int_{\mathbb{R}^n} \left|
		\int_{|y|> \varrho t^{1/2}} ( G(t, x-y)-G(t,x) ) \phi (y)\,dy \right| \,dx
	=: I+ I\!I,
\end{align*}
where
$\varrho > 0$ is an arbitrary small number.
For $I$,
we use the mean value theorem
\[
	|G(t,x-y)-G(t,x)| \le 2(4\pi t)^{-\frac{n}{2}} t^{-\frac{1}{2}} |y|
		\int_0^1 \frac{|x-ay|}{\sqrt{t}} e^{-\frac{|x-ay|^2}{4t}}\,da.
\]
To prove \eqref{r1asy0}, noting that
$t^{-\frac{1}{2}} |y| \le \varrho$.
Thus, we have
$I \le C\varrho$.
For $I\!I$, we easily estimate
\begin{align*}
	I\!I &\lesssim
		\int_{|y|> \varrho t^{1/2}} \left( 
			\int_{\mathbb{R}^n} | G(t,x-y) - G(t,x) |\,dx \right)
			|\phi (y) |\, dy
	\lesssim \int_{|y| > \varrho t^{\frac{1}{2}}} |\phi(y)|\,dy.
\end{align*}
Taking $t$ sufficiently large, we have $I\!I \le \varrho$.
Consequently, we obtain
$I + I\!I \le C\varsigma$
for sufficiently large $t$.
Since $\varsigma$ is arbitrary, this proves \eqref{r1asy0}.
Similarly, we can easily prove \eqref{r1asy0} in the case
$m=\infty$, and then, by the interpolation,
we can obtain \eqref{r1asy0} for all $m\in [1,\infty]$.

To prove \eqref{r1asy1},
noting
$|y| \le |y|^{\min\{ \delta, 1 \}} t^{\frac{1}{2}(1-\min\{ \delta, 1\})}$
on the integral region of $I$ with $\varrho=1$, and then, by the Fubini theorem,
we see that
\[
	I \lesssim t^{-\frac{1}{2} + \frac{1}{2}(1-\min\{\delta,1\})}
		\int_{\mathbb{R}^n} |y|^{\min\{\delta, 1\}} |\phi (y)|\,dy
	\lesssim t^{-\min\{\frac{1}{2}, \frac{\delta}{2}\}}
		\| \langle \cdot \rangle^{\delta} \phi \|_{L^1}.
\]
On the other hand, the term $I\!I$ is easily estimated as
\begin{align*}
	I\!I &\lesssim
		\int_{|y|>t^{1/2}} \left( 
			\int_{\mathbb{R}^n} | G(t,x-y) - G(t,x) |\,dx \right)
			|\phi (y) |\, dy\\
	&\lesssim t^{-\frac{\delta}{2}} \int_{\mathbb{R}^n} |y|^{\delta} |\phi(y)|\,dy.
\end{align*}
Thus, we obtain \eqref{r1asy1} when $m=1$.
In a similar way, we can prove \eqref{r1asy1} when $m=\infty$
and hence, the interpolation gives \eqref{r1asy1} for all $m\in [1,\infty]$.

Next, we claim that
\begin{align}
\label{r1asy3}
	\left\| \int_0^t \mathcal{G}(t-\tau)\mathcal{N}(u(\tau))\,d\tau
		- \theta_2 G(t) \right\|_{L^m}
	= o(t^{-\frac{n}{2}\left( 1 - \frac{1}{m} \right)})
\end{align}
as $t\to \infty$ and
\begin{align}
\label{r1asy2}
	\left\| \int_0^t \mathcal{G}(t-\tau)\mathcal{N}(u(\tau))\,d\tau
		- \theta_2 G(t)  \right\|_{L^m}
	\lesssim \langle t \rangle^{-\frac{n}{2}\left( 1 - \frac{1}{m} \right)%
	- \min\{ \frac12, \frac{n}{2}(p-1)-1 \}%
	+\varsigma}%
\end{align}
for all $t\ge 1$ and $m$ satisfying \eqref{m}.
Indeed, we first divide the left-hand side as
\begin{align*}
	&\left\| \int_0^t \mathcal{G}(t-\tau)\mathcal{N}(u(\tau))\,d\tau
		- \theta_2 G(t) \right\|_{L^m} \\
	&\quad \lesssim
		\int_0^{\frac{t}{2}} \left\|  \mathcal{G}(t-\tau)\mathcal{N}(u(\tau))
			- G(t-\tau) \left( \int_{\mathbb{R}^n} \mathcal{N}(u(\tau))\,dx\right) \right\|_{L^m}
			\,d\tau\\
	&\qquad +
		\int_0^{\frac{t}{2}} \left\| \left(
			G(t-\tau) - G(t) \right) \left( \int_{\mathbb{R}^n} \mathcal{N}(u(\tau))\,dx\right)
			\right\|_{L^m}\,d\tau \\
	&\qquad +
		\int_{\frac{t}{2}}^t
			\left\| \mathcal{G}(t-\tau)\mathcal{N}(u(\tau)) \right\|_{L^m}\,d\tau +
		\int_{\frac{t}{2}}^{\infty}
			\left\| G(t) \left( \int_{\mathbb{R}^n} \mathcal{N}(u(\tau))\,dx\right)
					\right\|_{L^m}\,d\tau \\
	&\quad =: I\!I\!I_m + I\!V_m + V_m + V\!I_m.
\end{align*}
Let us start with $V\!I_m$.
Since
$\| G(t) \|_{L^m} = C t^{-\frac{n}{2}(1-\frac{1}{m})}$
and
$\| \mathcal{N}(u(\tau)) \|_{L^1}\lesssim
\langle \tau \rangle^{-\frac{n}{2}(p-1)}$,
it is easy to see that
\[
	V\!I_m \lesssim
	t^{-\frac{n}{2}(1-\frac{1}{m}) -\frac{n}{2}(p-1) +1}.
\]
Secondly, we estimate $V_m$.
For simplicity, we only consider the case $n>2s$.
When $m=\frac{2n}{n-2s}$, the Sobolev inequality gives
\begin{align*}
	V_{\frac{2n}{n-2s}} &\lesssim
	 \int_{\frac{t}{2}}^t
			\left\| |\nabla|^s\mathcal{G}(t-\tau)\mathcal{N}(u(\tau)) \right\|_{L^2}\,d\tau \\
	&\lesssim \int_{\frac{t}{2}}^t
		\langle t-\tau \rangle^{-\frac{n}{2}\left(\frac{1}{\rho}-\frac{1}{2}\right)-\frac{s-[s]}{2} }
			\| |\nabla|^{[s]} \mathcal{N}(u(\tau)) \|_{L^{\rho}}\,d\tau \\
	&\lesssim \int_{\frac{t}{2}}^t
		\langle t-\tau \rangle^{-\frac{n}{2}\left(\frac{1}{\rho}-\frac{1}{2}\right)-\frac{s-[s]}{2} }
		\langle \tau \rangle^{-\eta}\, d\tau \\
	&\lesssim \langle t \rangle^{-\frac{n}{2}(1-\frac{1}{m}) -\frac{n}{2}(p-1) +1}.
\end{align*}
Also, by
$\| \mathcal{N}(u(\tau)) \|_{L^1} \lesssim 
\langle \tau \rangle^{-\frac{n}{2}(p-1)}$,
we immediately obtain
$V_1 \lesssim \langle t \rangle^{-\frac{n}{2}(p-1) +1}$.
Interpolating these two cases, we conclude that
\[
	V_m \lesssim
	\langle t \rangle^{-\frac{n}{2}(1-\frac{1}{m}) -\frac{n}{2}(p-1) +1}
\]
for all $m\in [1, \frac{2n}{n-2s}]$.
The case $n \le 2s$ can be also proved by using a similar
interpolation as in the proof of Lemma \ref{lem_ip} (ii).

Next, we consider $I\!V_m$.
It follows from
$\| \mathcal{N}(u(\tau)) \|_{L^1}
\lesssim \langle \tau \rangle^{-\frac{n}{2}(p-1)}$
and the fundamental theorem of calculus
\[
	G(t-\tau,x) - G(t,x) = - \tau \int_0^1 (\partial_t G)(t-a \tau,x)\,da
\]
we deduce that
\begin{align*}
	I\!V_m
	&\lesssim
		\int_0^{\frac{t}{2}} \langle t - \tau \rangle^{%
			-\frac{n}{2}\left( 1-\frac{1}{m} \right)-1}%
			\langle \tau \rangle^{1-\frac{n}{2}(p-1)}\,d\tau 
	\lesssim \langle t \rangle^{-\frac{n}{2}\left( 1-\frac{1}{m} \right)}
	\begin{cases}
		\langle t \rangle^{-1}&(p>1+\frac4n),\\
		\langle t \rangle^{-1}\log \langle t \rangle&(p=1+\frac4n),\\
		\langle t \rangle^{-\frac{n}{2}(p-1)+1}&(p<1+\frac4n).
	\end{cases}
\end{align*}
Finally, we estimate $I\!I\!I_m$.
To prove \eqref{r1asy3}, noting
$\mathcal{N}(u(\tau)) \in L^{\infty}(0,\infty; L^1(\mathbb{R}^n))$
we can prove
$I\!I\!I_m = o( t^{-\frac{n}{2}\left(1-\frac{1}{m}\right)})$
as $t\to \infty$.

To show \eqref{r1asy2}, applying \eqref{r1asy1}, we have
\begin{align*}
	I\!I\!I_m &\lesssim
	\int_0^{\frac{t}{2}} \langle t -\tau \rangle^{%
		-\frac{n}{2}\left( 1-\frac{1}{m} \right) - \min\{\frac{1}{2}, \frac{\delta}{2}\} }%
		\| \langle \cdot \rangle^{\delta} \mathcal{N}(u) \|_{L^1} \,d\tau.
\end{align*}
Taking $\delta = \beta\ (n=2), \delta = 1\ (n\ge 3)$,
we conclude
\begin{align*}
	I\!I\!I_m &\lesssim
	 \langle t \rangle^{-\frac{n}{2}\left( 1 - \frac{1}{m} \right)%
	- \min\{ \frac12, \frac{n}{2}(p-1)-1 \}%
	+\varsigma}.%
\end{align*}
Putting together the estimates above, we reach \eqref{r1asy2}.
\end{proof}

From Propositions \ref{prop_asy} and \ref{prop_asy2},
we finish up the proof of Theorem \ref{thm_asy}.
\begin{proof}[Proof of Theorem \ref{thm_asy}]
We start with the case $r>1$.
Let $m$ be a real number satisfying \eqref{m}.
First, we assume that $m\ge 2$.
In this case, the Sobolev embedding theorem implies
$\| \psi \|_{L^m} \le \| |\nabla|^{s_m} \psi \|_{L^2}$
with
$s_m = n ( \frac{1}{2} - \frac{1}{m} ) \in [0,s]$.
This and the interpolation inequality
\[
	\| |\nabla|^{s_m} \psi \|_{L^2}
		\le \| |\nabla|^s \psi \|_{L^2}^{\frac{s_m}{s}}
			\| \psi \|_{L^2}^{1-\frac{s_m}{s}}
\]
lead to
\begin{align*}
	& \| u(t) - \varepsilon \mathcal{G}(t)(u_0+u_1) \|_{L^m} \\
	&\quad \le \| |\nabla|^s (u(t) - \varepsilon \mathcal{G}(t)(u_0+u_1)) \|_{L^2}^{\frac{s_m}{s}}
		\| u(t) - \varepsilon \mathcal{G}(t)(u_0+u_1) \|_{L^2}^{1-\frac{s_m}{s}}.
\end{align*}
Using Propositions \ref{prop_asy} and \ref{prop_asy2},
we calculate
\begin{align*}
	&\| |\nabla|^s (u(t) - \varepsilon \mathcal{G}(t)(u_0+u_1)) \|_{L^2} \\
	&\lesssim
		\| |\nabla|^s (u(t) -v(t)) \|_{L^2}
		+ \| |\nabla|^s (v(t) - \varepsilon \mathcal{G}(t)(u_0+u_1)) \|_{L^2} \\
	&\lesssim
		\langle t \rangle^{%
		-\frac{n}{2} \left( \frac{1}{r}-\frac{1}{2}\right)-\frac{s}{2}%
		- \min\{ 1 , \frac{n}{2r}(p-1) - \frac{1}{2} \}}
		+
		\langle t \rangle^{-\frac{n}{2}\left( \frac{1}{r}-\frac{1}{2} \right) - \frac{s}{2}
		-\min\{ \frac{n}{2}\left(1-\frac{1}{r}\right), \frac12, \frac{n}{2r}(p-1)-1 \} %
		+ \varsigma}\\
	&\lesssim
		\langle t \rangle^{-\frac{n}{2}\left( \frac{1}{r}-\frac{1}{2} \right) - \frac{s}{2}
		-\min\{ \frac{n}{2}\left(1-\frac{1}{r}\right), \frac12, \frac{n}{2r}(p-1)-1 \} %
		+ \varsigma}.
\end{align*}
Similarly, we can see that
\[
	\| u(t) - \varepsilon \mathcal{G}(t)(u_0+u_1) \|_{L^2} \\
	\lesssim \langle t \rangle^{-\frac{n}{2}\left( \frac{1}{r}-\frac{1}{2} \right)
	-\min\{ \frac{n}{2}\left(1-\frac{1}{r}\right), \frac12, \frac{n}{2r}(p-1)-1 \} %
		+ \varsigma}.
\]
These two estimates yield
\begin{align*}
	 \| u(t) - \varepsilon \mathcal{G}(t)(u_0+u_1) \|_{L^m}
	 &\lesssim
	 \langle t \rangle^{-\frac{n}{2}\left( \frac{1}{r}-\frac{1}{2} \right) -\frac{s_m}{2}
	 -\min\{ \frac{n}{2}\left(1-\frac{1}{r}\right), \frac12, \frac{n}{2r}(p-1)-1 \} %
		+ \varsigma}\\
	 &=
	  \langle t \rangle^{-\frac{n}{2}\left( \frac{1}{r}-\frac{1}{m} \right)
	  -\min\{ \frac{n}{2}\left(1-\frac{1}{r}\right), \frac12, \frac{n}{2r}(p-1)-1 \} %
		+ \varsigma},
\end{align*}
which shows the desired estimate.

When $m\in [r,2]$, by the interpolation
\begin{align*}
	\| \psi \|_{L^m} \le
	\| \psi \|_{L^r}^{\frac{r(2-m)}{m(2-r)}} \| \psi \|_{L^2}^{\frac{2(m-r)}{m(2-r)}} 
	\le \| |\cdot|^{\beta} \psi \|_{L^2}^{\frac{r(2-m)}{m(2-r)}}
		\| \psi \|_{L^2}^{\frac{2(m-r)}{m(2-r)}}
\end{align*}
with $\beta$ satisfying
$n(\frac{1}{r}-\frac{1}{2}) < \beta \le \alpha$
and
$(\beta - n(\frac{1}{r}-\frac{1}{2}))\frac{r(2-m)}{m(2-r)} < \varsigma$.
Thus, it suffices to estimate the right-hand side with
$\psi = u(t) - \varepsilon \mathcal{G}(t)(u_0+u_1)$.
In the same way as before, we can deduce that
\[
	\| |\cdot|^{\beta} (u(t) - \varepsilon \mathcal{G}(t)(u_0+u_1)) \|_{L^2}
	\lesssim
	\langle t \rangle^{-\frac{n}{2}\left( \frac{1}{r}-\frac{1}{2} \right) +\frac{\beta}{2}
	 -\min\{ \frac{n}{2}\left(1-\frac{1}{r}\right), \frac12, \frac{n}{2r}(p-1)-1 \} %
		+ \varsigma}
\]
and hence, we have
\begin{align*}
	\| u(t) - \varepsilon \mathcal{G}(t)(u_0+u_1) \|_{L^m}
	 &\lesssim
	\langle t \rangle^{%
		-\frac{n}{2}\left( \frac{1}{r}-\frac{1}{2} \right)%
		+\frac{\beta}{2}\frac{r(2-m)}{m(2-r)}%
		 -\min\{ \frac{n}{2}\left(1-\frac{1}{r}\right), \frac12, \frac{n}{2r}(p-1)-1 \} %
		+ \varsigma}\\
	&=\langle t \rangle^{%
		-\frac{n}{2}\left( \frac{1}{r}-\frac{1}{m} \right)%
		+\left( \frac{\beta}{2} - \frac{n}{2}\left(\frac{1}{r}-\frac{1}{2} \right) \right)%
			 \frac{r(2-m)}{m(2-r)}
		 -\min\{ \frac{n}{2}\left(1-\frac{1}{r}\right), \frac12, \frac{n}{2r}(p-1)-1 \} %
		+ \varsigma}\\
	&= \langle t \rangle^{%
		-\frac{n}{2}\left( \frac{1}{r}-\frac{1}{m} \right)%
		 -\min\{ \frac{n}{2}\left(1-\frac{1}{r}\right), \frac12, \frac{n}{2r}(p-1)-1 \} %
		+ \frac{3\varsigma}{2}}.
\end{align*}
Rewriting $\frac{3\varsigma}{2}$ as $\varsigma$,
we complete the proof.

When $r=1$, as before, we first divide
\begin{align*}
	\| u(t) - \theta G(t)\|_{L^m}
	&\lesssim
	\| u(t) - v(t) \|_{L^m} + \| v(t) - \theta G(t) \|_{L^m}.
\end{align*}
Let
$\beta$ satisfy $\frac{n}{2}<\beta \le \alpha$
and $(\beta - \frac{n}{2})(\frac1m-\frac12) < \varsigma$.
Then, the first term is estimated as
\begin{align*}
	\| u(t) - v(t) \|_{L^m}
	&\lesssim
	\langle t \rangle^{-\frac{n}{2}\left(1-\frac{1}{m}\right)}
	\begin{cases}
		\langle t \rangle^{-\min\{ 1, \frac{n}{2}(p-1)-\frac{1}{2}\} }
		&(m\ge 2),\\
		\langle t \rangle^{\left(\beta-\frac{n}{2}\right)\left(\frac{1}{m}-\frac{1}{2}\right)%
			-\min\{ 1, \frac{n}{2}(p-1)-\frac{1}{2}\}}%
		&(1\le m <2)
	\end{cases} \\
	&\lesssim
	\langle t \rangle^{-\frac{n}{2}\left(1-\frac{1}{m}\right)%
	-\min\{ 1, \frac{n}{2}(p-1)-\frac{1}{2}\} + \varsigma}.
\end{align*}
For the second term, we apply Proposition \ref{prop_asy2}
and comparing the decay rate leads to the conclusion.
\end{proof}

%%%%%%%%%%%%%%%%%%%%%%%%%%%%%%%%%%%%%%%%%%%%
%%%%%%%%%%%%%%%%%%%%%%%%%%%%%%%%%%%%%%%%%%%%
%%%%%%%%%%%%%%%%%%%%%%%%%%%%%%%%%%%%%%%%%%%%
%%%%%%%%%%%%%%%%%%%%%%%%%%%%%%%%%%%%%%%%%%%%
\section{Blow-up and estimates of the lifespan}
In this section, we give a proof of Theorems \ref{thm_lflow} and \ref{thm_lfupp}.

\subsection{Estimates of the lifespan from below}
At first, we prove Theorem \ref{thm_lflow}.

\begin{proof}[Proof of Theorem \ref{thm_lflow}]
We first consider Case 1, namely, we assume
$r\in [1,2]$ and
$\min\{ 1+\frac{r}{2}, 1+\frac{r}{n} \} \le p < 1+\frac{2r}{n}$.

If $T(\varepsilon) = \infty$, then the assertion of the theorem is obviously true.
Hence, in what follows we assume $T(\varepsilon) < \infty$.
Then, Theorem \ref{thm_lwp} shows that
\begin{align}
\label{lfbu}
	\lim_{t \to T(\varepsilon)} \| u(t) \|_{H^{s,0}\cap H^{0,\alpha}} = \infty.
\end{align}
%In fact, if
%\[
%	\lim_{t \to T(\varepsilon)} \| u(t) \|_{H^{s,0}\cap H^{0,\alpha}} \le M
%\]
%holds with some $M > 0$, then 
On the other hand, by Lemmas \ref{lem_duh} and \ref{lem_nl}, we have
\begin{align*}
	\left\| \int_0^t \mathcal{D}(t-\tau) \mathcal{N}(u(\tau))\,d\tau \right\|_{X(T)}
	&\lesssim
	\| \mathcal{N}(u) \|_{Y(T)}
	\langle T \rangle^{1-\frac{n}{2r}(p-1)} \\
	&\lesssim
	\| u \|_{X(T)}^{p}
	\langle T \rangle^{1-\frac{n}{2r}(p-1)}.
\end{align*}
This implies
\begin{align}
\label{ap1}
	\| u \|_{X(T)}
	\le \varepsilon C_0I_0 + C_1\| u \|_{X(T)}^{p}
	\langle T \rangle^{1-\frac{n}{2r}(p-1)}
\end{align}
with some constant $C_0, C_1 > 0$,
where
$I_0 = \| u_0 \|_{H^{s,0}\cap H^{0,\alpha}} + \| u_1 \|_{H^{s-1,0}\cap H^{0,\alpha}}$.
Now, by \eqref{lfbu}
and the continuity of $\| u \|_{X(T)}$ with respect to $T$,
there exists the smallest time
$\widetilde{T}(\varepsilon)$ such that
$\| u \|_{X(\widetilde{T}(\varepsilon))} = 2C_0I_0 \varepsilon$ holds.
Then, letting $T=\widetilde{T}(\varepsilon)$ in the above inequality gives
\begin{align*}
	2C_0I_0 \varepsilon
	\le C_0I_0 \varepsilon 
	+ C_1 (2C_0I_0 \varepsilon)^p
	\langle \widetilde{T}(\varepsilon) \rangle^{1-\frac{n}{2r}(p-1)}.
\end{align*}
We rewrite it as
\[
	C_0I_0 \varepsilon^{-(p-1)}
	\le C_1 (2C_0I_0)^p
	\langle \widetilde{T}(\varepsilon) \rangle^{1-\frac{n}{2r}(p-1)},
\]
which implies
\[
	\langle \widetilde{T}(\varepsilon) \rangle
	\gtrsim
	\varepsilon^{-1/\omega}
\]
with $\omega = \frac{1}{p-1} - \frac{n}{2r}$.
Thus, taking $\varepsilon$ sufficiently small,
we easily see that $\langle \widetilde{T}(\varepsilon) \rangle \ge 1$,
which enables us to replace
$\langle \widetilde{T}(\varepsilon) \rangle$
by $\widetilde{T}(\varepsilon)$ in the above estimate.
This and $\widetilde{T}(\varepsilon) < T(\varepsilon)$ imply the desired estimate.

In Case 2, that is, when $r=1$ and $p=1+\frac{2}{n}$,
the first author and Ogawa \cite{IkeOg} have already
proved the conclusion of Theorem \ref{thm_lflow}
by a slightly different argument.

Instead of \eqref{ap1}, we obtain
\[
	\| u \|_{X(T)}
	\le \varepsilon C_0I_0 + C_1\| u \|_{X(T)}^{p}
	\log \langle T \rangle.
\]
In the same manner as above, we can see that
\[
	\langle \widetilde{T}(\varepsilon) \rangle
	\ge
	\exp \left( C\varepsilon^{-(p-1)} \right),
\]
which gives the desired conclusion.
\end{proof}

\subsection{Estimates of the lifespan from above}
In this subsection, we give a proof of Theorem \ref{thm_lfupp}.
The proof is based on the test function method
introduced by Zhang \cite{Zh01} and
refined to estimate the lifespan by
Kuiper \cite{Ku03}, Sun \cite{Su10} and \cite{IkeWa15}
in which initial data belonging to $L^1$ are treated.
Here we further adapt their method to fit
initial data not belonging to $L^1$ by the argument in
\cite{IkeIn15}.

%AAA
%L1以外のデータ

Before proving Theorem \ref{thm_lfupp},
we introduce the definition of weak solutions of \eqref{nldw}.
Let $T > 0$ and $u_0, u_1 \in L^1_{loc}(\mathbb{R}^n)$.
We say that a function $u \in L^p_{loc}([0,T)\times \mathbb{R}^n)$
is a weak solution
of \eqref{nldw} on the interval $[0,T)$ if the identity
\begin{align*}
	&\int_{[0,T)\times \mathbb{R}^n} u
	\left( \psi_{tt} - \Delta \psi - \psi_t \right) \,dxdt \\
	&= \varepsilon \int_{\mathbb{R}^n}
		\left( (u_0 + u_1)\psi(0,x) - u_0 \psi_t(0,x) \right)\,dx
		+ \int_{[0,T)\times \mathbb{R}^n} \mathcal{N}(u) \psi \,dxdt
\end{align*}
is valid for any
$\psi \in C_0^{\infty}([0,T)\times \mathbb{R}^n)$.
We also define the lifespan of the weak solution:
\begin{align*}
	T_w(\varepsilon):= \sup\{
		T \in (0,\infty) ; \mbox{there exists a weak solution}\ u\ 
						\mbox{on}\ [0,T) \}.
\end{align*}

%%%%%%%%%%%%%%%%%%%%%%%%%%%%%%%%%%%%%%%%%%%%
\begin{proposition}\label{prop_bu}
Under the assumptions of Theorem \ref{thm_lfupp},
we have
\[
	T_w(\varepsilon)
	\lesssim \varepsilon^{-1/\kappa}
\]
for $\varepsilon \in (0,1]$,
where $\kappa = \frac{1}{p-1} - \frac{\lambda}{2}$
and
$\lambda$ satisfies $\frac{n}{2} + \alpha < \lambda < \frac{2}{p-1}$.
\end{proposition}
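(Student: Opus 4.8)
The plan is to use the test-function (weak formulation) method of Zhang \cite{Zh01}, combined with the lifespan-oriented refinement of Kuiper \cite{Ku03}, Sun \cite{Su10} and \cite{IkeWa15}, and adapted to the slowly decaying data as in \cite{IkeIn15}. I would work directly on $[0,T)$ for an arbitrary $T<T_w(\varepsilon)$, so that a weak solution admits test functions supported in time in $[0,T)$; a quantitative bound valid for all such $T$ will bound $T_w(\varepsilon)$ itself, thereby sidestepping the pure nonexistence-by-contradiction argument.

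First I would fix $\eta\in C_0^\infty([0,\infty))$ with $\eta\equiv1$ on $[0,1/2]$ and $\eta\equiv0$ on $[1,\infty)$, and take the parabolically scaled test function
\[
\psi_T(t,x)=\eta\!\left(\tfrac{t}{T}\right)^{2p'}\eta\!\left(\tfrac{|x|^2}{T}\right)^{2p'},\qquad \tfrac1p+\tfrac1{p'}=1,
\]
the high power $2p'$ guaranteeing that $\psi_T^{-(p'-1)}|\partial^\gamma\psi_T|^{p'}$ stays integrable. Since $\eta\equiv1$ near the origin, $\partial_t\psi_T(0,\cdot)=0$, so the $u_0$ term in the weak identity drops out; using the sign condition $\pm(u_0+u_1)\ge|x|^{-\lambda}$ matched to $\mathcal N(u)=\pm|u|^p$, the data contribution
\[
D_T:=\varepsilon\int_{\mathbb R^n}\pm(u_0+u_1)\,\psi_T(0,x)\,dx\ \ge\ \varepsilon\int_{1<|x|^2<T/2}|x|^{-\lambda}\,dx\ \gtrsim\ \varepsilon\,T^{(n-\lambda)/2}
\]
is nonnegative and, through the nonintegrable tail, grows in $T$.

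Next I would insert $\psi_T$ into the weak formulation and apply Hölder's inequality to the left-hand side, splitting $u(\partial_t^2\psi_T-\Delta\psi_T-\partial_t\psi_T)=(u\,\psi_T^{1/p})\,(\psi_T^{-1/p}(\partial_t^2\psi_T-\Delta\psi_T-\partial_t\psi_T))$, to obtain
\[
I_T+D_T\ \le\ I_T^{1/p}J_T^{1/p'},\qquad I_T:=\int|u|^p\psi_T,\quad J_T:=\int\psi_T^{-(p'-1)}\bigl|\partial_t^2\psi_T-\Delta\psi_T-\partial_t\psi_T\bigr|^{p'}.
\]
Because $u\in L^p_{loc}$ and $\psi_T$ has compact support, $I_T<\infty$, so Young's inequality absorbs $I_T$ and yields $D_T\lesssim J_T$. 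A scaling count (the integrand is $O(T^{-p'})$ on a set of measure $O(T^{1+n/2})$, with the $\partial_t$ and $\Delta$ terms dominating $\partial_t^2$) gives $J_T\lesssim T^{1+\frac n2-p'}$. Combining with the data bound gives $\varepsilon\,T^{(n-\lambda)/2}\lesssim T^{1+\frac n2-p'}$, that is $\varepsilon\lesssim T^{1-p'+\lambda/2}=T^{-\kappa}$ with $\kappa=\frac1{p-1}-\frac\lambda2$. The hypothesis $\lambda<\frac2{p-1}$ is exactly $\kappa>0$, so this reads $T^{\kappa}\lesssim\varepsilon^{-1}$, and letting $T\uparrow T_w(\varepsilon)$ produces $T_w(\varepsilon)\lesssim\varepsilon^{-1/\kappa}$.

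The main obstacle is the control of the data term $D_T$: in the $L^1$ theory it is essentially $\varepsilon$ times the conserved mass and $T$-independent, whereas here the slow decay $|x|^{-\lambda}$ feeds a genuinely $T$-growing lower bound $\varepsilon\,T^{(n-\lambda)/2}$, and it is precisely this growth — balanced against the capacity exponent $1+\frac n2-p'$ of $J_T$ — that produces the exponent $\kappa=\frac1{p-1}-\frac\lambda2$ rather than the Fujita value. I would be careful that the lower constraint $\lambda>\frac n2+\alpha$ is used only to guarantee $u_0+u_1\in H^{0,\alpha}$ (so that the weak solution is the local solution of Theorem \ref{thm_lwp} and $\alpha<\frac2{p-1}-\frac n2$ keeps the range of $\lambda$ nonempty), while the upper constraint $\lambda<\frac2{p-1}$ is what drives the blow-up; and that, by running the estimate quantitatively for every $T<T_w(\varepsilon)$, the conclusion is the lifespan bound itself and not merely nonexistence of a global weak solution.
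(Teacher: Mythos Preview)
Your proof is correct and follows essentially the same route as the paper: the Zhang test-function method, run quantitatively for every $T<T_w(\varepsilon)$, with the key new input being the lower bound on the data term coming from the slow tail $|x|^{-\lambda}$, which produces the exponent $\kappa=\frac{1}{p-1}-\frac{\lambda}{2}$ in place of the Fujita value.

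The only cosmetic differences are that the paper keeps two independent parameters $\tau$ (time scale) and $R$ (space scale), obtains $\varepsilon\lesssim \tau R^{-2p'+\lambda}+\tau^{-p'+1}R^{\lambda}$, and then sets $R=\tau^{1/2}$ at the end, whereas you build the parabolic scaling $|x|^2\sim T$ into the test function from the outset; and the paper uses the single-power cutoff $\eta_\tau(t)\phi_R(x)$ together with the pointwise bound $|\eta^{(j)}|\lesssim\eta^{1/p}$, while you raise the cutoff to the power $2p'$ to make $\psi_T^{-(p'-1)}|\partial^\gamma\psi_T|^{p'}$ directly integrable. Neither choice affects the argument. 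One small remark: your phrase ``through the nonintegrable tail, grows in $T$'' presumes $\lambda<n$, which is not always the case under the hypotheses (e.g.\ when $r$ is close to $1$); but your actual inequality $D_T\gtrsim\varepsilon T^{(n-\lambda)/2}$ is valid for large $T$ regardless of the sign of $n-\lambda$, so the conclusion is unaffected.
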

%%%%%%%%%%%%%%%%%%%%%%%%%%%%%%%%%%%%%%%%%%%%

\begin{proof}[Proof of Proposition \ref{prop_bu}]
We assume that
$\mathcal{N}(u) = |u|^p$.
The case $\mathcal{N}(u) = -|u|^p$ is reduces to the case above by
considering $-u$.

First, we may assume that $T_w(\varepsilon) \ge 4$.
Because, if $T_w(\varepsilon) \le 4$,
then we immediately obtain
$T_w(\varepsilon) \le 4\varepsilon^{-1/\kappa}$
for any $\varepsilon \in (0,1]$.

Let
$\eta = \eta(t) \in C_0^{\infty}([0,\infty))$ be a test function satisfying
\[
	0\le \eta (t) \le 1,\quad
	\eta(t) = \begin{cases}
		1& (0\le t \le 1/2),\\
		0& (t \ge 1)
	\end{cases}
\]
and let $\phi(x) := \eta (|x|)$.
Then, we have (see for example, \cite{NiWa14})
\begin{align}
\label{t1}
	|\eta^{(j)}(t)| \lesssim \eta(t)^{1/p}\ (j=1,2),\quad
	|\Delta \phi (x) | \lesssim \phi(x)^{1/p}.
\end{align}
Let
$\tau \in [1, T_w(\varepsilon))$ and $R \in [2,\infty)$ be parameters.
We define
$\eta_{\tau}(t) = \eta (\frac{t}{\tau})$,
$\phi_R(x) = \phi(\frac{x}{R})$
and
\[
	\psi = \psi_{\tau,R}(t,x) = \eta_{\tau}(t) \phi_R(x).
\]
We also put
\begin{align*}
	I_{\tau,R} := \int_{[0,\tau)\times B_R} |u|^p \psi_{\tau,R}\,dxdt,\quad
	J_R := \varepsilon \int_{B_R} (u_0+u_1) \phi_R(x) \,dx,
\end{align*}
where $B_R = \{ x\in \mathbb{R}^n ;  |x| < R \}$.
Taking $\psi = \psi_{\tau,R}$ in the definition of the weak solution,
we have the identity
\begin{align*}
	I_{\tau,R} + J_R
	&= \int_{[0,\tau)\times B_R} u \partial_t^2 \psi_{\tau,R}\,dxdt
		- \int_{[0,\tau)\times B_R} u \Delta \psi_{\tau,R}\,dxdt
		- \int_{[0,\tau)\times B_R} u \partial_t \psi_{\tau,R}\,dxdt \\
	&=: K_1 + K_2 + K_3.
\end{align*}
By the H\"{o}lder inequality and \eqref{t1}, we have
\begin{align}
\label{k1}
	K_1&\lesssim
		\tau^{-2} \int_{[0,T)\times B_R} |u| |\eta^{\prime\prime}(t/\tau)| |\phi_R(x)|\,dxdt \\
\nonumber
		&\lesssim
		\tau^{-2}
		\int_{[0,T)\times B_R} |u| \eta_{\tau}(t)^{1/p} |\phi_R(x)|\,dxdt \\
\nonumber
		&\lesssim
		\tau^{-2} I_{\tau,R}^{1/p}
		\left( \int_{\frac{\tau}{2}}^{\tau} \int_{B_R} \phi_R \,dxdt \right)^{1/p^{\prime}} \\
\nonumber
		&\lesssim \tau^{-2+1/p^{\prime}} R^{n/p^{\prime}} I_{\tau,R}^{1/p},
\end{align}
where
$p^{\prime} = \frac{p}{p-1}$.
In a similar way, we see that
\begin{align}
\label{k2}
	K_2 \lesssim  \tau^{1/p^{\prime}} R^{-2+n/p^{\prime}} I_{\tau,R}^{1/p}
\end{align}
and
\begin{align}
\label{k3}
	K_3 \lesssim \tau^{-1+1/p^{\prime}} R^{n/p^{\prime}} I_{\tau,R}^{1/p}.
\end{align}
It follows from \eqref{k1}--\eqref{k3} that
\begin{align*}
	I_{\tau,R} + J_R
	\lesssim \left( \tau^{1/p^{\prime}} R^{-2+n/p^{\prime}}
				+ \tau^{-1+1/p^{\prime}} R^{n/p^{\prime}} \right) I_{\tau,R}^{1/p}.
\end{align*}
By the Young inequality, the right-hand side is bounded by
\[
	\frac{1}{2} I_{\tau,R}
	+ C ( \tau R^{-2 p^{\prime} +n}
		+ \tau^{-p^{\prime} +1} R^{n} ).
\]
Therefore, we have
\[
	J_R \lesssim \tau R^{-2 p^{\prime} +n} + \tau^{-p^{\prime} +1} R^{n}.
\]
From the assumption on the initial data,
we see that
\begin{align*}
	J_R &\ge \varepsilon \int_{|x|>1} (u_0(x)+u_1(x)) \phi \left( \frac{x}{R} \right)\,dx \\
		&\ge \varepsilon \int_{|x|>1} |x|^{-\lambda} \phi \left( \frac{x}{R} \right)\,dx \\
		&\ge \varepsilon R^{-\lambda+n} \int_{|y|>1/R} |y|^{-\lambda} \phi (y) \,dy\\
		&\ge \varepsilon R^{-\lambda+n} \int_{|y|>1/2} |y|^{-\lambda} \phi (y) \,dy,
\end{align*}
since $R \ge 2$.
Thus, we have
\begin{align*}
	\varepsilon \lesssim
	\tau R^{-2 p^{\prime} +\lambda } + \tau^{-p^{\prime} +1} R^{\lambda}.
\end{align*}
By taking $R = \tau^{1/2}$, we obtain
$\varepsilon \lesssim \tau^{-( \frac{1}{p-1} - \frac{\lambda}{2})}$,
which implies
\[
	\tau \lesssim \varepsilon^{-1/\kappa},
\]
with $\kappa = \frac{1}{p-1} - \frac{\lambda}{2}$.
Since $\tau$ is arbitrary in $[1,T_w(\varepsilon))$, we conclude
\[
	T_w(\varepsilon) \lesssim \varepsilon^{-1/\kappa}.
\]
This finishes the proof.
\end{proof}

%%%%%%%%%%%%%%%%%%%%%%%%%%%%%%%%%%%%%%%%%%%%
\begin{proposition}\label{prop_sol}
Under the assumptions of Theorem \ref{thm_lwp},
the mild solution $u$ on $[0,T)$ is also a weak solution on $[0,T)$.
\end{proposition}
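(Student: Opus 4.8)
The plan is to verify the defining identity of a weak solution directly from the Duhamel representation
\[
	u(t)= \tilde{\mathcal{D}}(t)\varepsilon u_0 + \mathcal{D}(t)\varepsilon u_1 + \int_0^t \mathcal{D}(t-\tau)\mathcal{N}(u(\tau))\,d\tau,
\]
transferring every time and space derivative onto the smooth test function $\psi$ and onto the explicit kernel, so that the merely $H^{s,0}\cap H^{0,\alpha}$-regular function $u$ is never differentiated. The starting point is the observation that the Fourier multiplier of $\mathcal{D}(t)$, namely $D(t,\xi):=e^{-t/2}L(t,\xi)$ with $L$ as in \eqref{l}, solves for each fixed $\xi$ the constant-coefficient ODE
\[
	(\partial_t^2+\partial_t+|\xi|^2)D(t,\xi)=0,\qquad D(0,\xi)=0,\quad \partial_t D(0,\xi)=1,
\]
and that by \eqref{dt} the multiplier of $\tilde{\mathcal{D}}(t)$ solves the same ODE with data $\tilde D(0,\xi)=1$, $\partial_t\tilde D(0,\xi)=0$. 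These are exactly the Fourier symbols associated with the damped wave operator $\partial_t^2-\Delta+\partial_t$.

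Next I would pass to the Fourier side via Plancherel's theorem (all symbols are real and even in $\xi$, so conjugation causes no difficulty). Writing $g=\psi_{tt}-\Delta\psi-\psi_t$, whose symbol is $(\partial_t^2-\partial_t+|\xi|^2)\hat\psi$, the space--time pairing becomes $\int_0^T\!\int \hat u\,(\partial_t^2-\partial_t+|\xi|^2)\hat\psi\,d\xi\,dt$, and I would treat the linear and nonlinear parts of the Duhamel formula separately. For the linear part, the integrand is a smooth function of $t$ for each fixed $\xi$, so I would integrate by parts twice in $t$ on $[0,T)$: since $\psi$ has compact time support the boundary at $t=T$ vanishes, the bulk term $\int_0^T (\partial_t^2+\partial_t+|\xi|^2)\widehat{u_{\mathrm{lin}}}\,\hat\psi\,dt$ cancels by the ODE, and the boundary at $t=0$, using $\widehat{u_{\mathrm{lin}}}(0)=\varepsilon\widehat{u_0}$ and $\partial_t\widehat{u_{\mathrm{lin}}}(0)=\varepsilon\widehat{u_1}$, collapses after integrating in $\xi$ to $\varepsilon\int[(u_0+u_1)\psi(0,\cdot)-u_0\psi_t(0,\cdot)]\,dx$.

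For the nonlinear part the key device is Fubini. Writing the contribution as $\int_0^T\!\int_0^t D(t-\tau,\xi)\,\widehat{\mathcal{N}(u(\tau))}\,(\partial_t^2-\partial_t+|\xi|^2)\hat\psi\,d\tau\,dt$ and exchanging the order of the $\tau$- and $t$-integrations over $\{0<\tau<t<T\}$, I would integrate by parts in $t$ moving the derivatives onto $D(t-\tau,\xi)$, which is smooth in $t$. The bulk vanishes by the ODE, the boundary at $t=T$ vanishes, and the boundary at $t=\tau$ produces exactly $\widehat{\mathcal{N}(u(\tau))}\,\hat\psi(\tau,\xi)$ because $D(0,\xi)=0$ and $\partial_t D(0,\xi)=1$; integrating in $\xi$ and $\tau$ yields $\int_0^T\!\int\mathcal{N}(u)\psi\,dx\,dt$. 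Summing the two contributions reproduces the weak formulation.

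The main obstacle is purely the justification of these manipulations under the available regularity. Since $u$ itself is never differentiated, the points to check are: that $u\in L^p_{\mathrm{loc}}$ and $\mathcal{N}(u)\in L^1_{\mathrm{loc}}$, which follow from $u\in C([0,T);H^{s,0}\cap H^{0,\alpha})$ together with Lemma \ref{lem_ip} and $|\mathcal{N}(u)|\lesssim|u|^p$; the convergence of the $\xi$-integrals, for which I would use the bounds on $L(t,\xi)$ and $\langle\xi\rangle L(t,\xi)$ from the proof of Lemma \ref{lem_lin} together with the rapid decay of $\hat\psi(t,\cdot)$ (as $\psi(t,\cdot)\in C_0^\infty$); and the Fubini exchange, justified by the absolute integrability coming from the $Y(T)$-bound \eqref{ynorm} on $\mathcal{N}(u)$ paired with the Schwartz kernel $\hat\psi$. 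The continuity properties \eqref{con1} and \eqref{con2} guarantee that the time integrals are well defined. An entirely equivalent route would be to smooth the data and the nonlinearity, obtain classical solutions, and pass to the limit, but the direct Fourier computation above is cleaner and self-contained.
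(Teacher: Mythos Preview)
Your argument is correct and takes a genuinely different route from the paper's. The paper proceeds by approximation: it picks smooth compactly supported $F_j\to\mathcal{N}(u)$ in $L^\infty(0,T_1;L^q)$ (and likewise smooths $u_0,u_1$), notes that for such smooth inputs the Duhamel pieces are classical solutions so that integration by parts is immediate, and then passes to the limit using the already-established mapping property of $\int_0^t\mathcal{D}(t-\tau)\,\cdot\,d\tau$ from $L^\infty L^q$ to $L^\infty L^2$ (extracted from the proof of Lemma~\ref{lem_duh}). Your approach instead works directly on the Fourier side, exploiting that the multiplier $D(t,\xi)=e^{-t/2}L(t,\xi)$ solves the scalar ODE $(\partial_t^2+\partial_t+|\xi|^2)D=0$ with the right Cauchy data, so that all integrations by parts are in $t$ only and are elementary for each fixed $\xi$; the Schwartz decay of $\hat\psi(t,\cdot)$ then absorbs the polynomial $\xi$-growth of $\partial_t^k D$ and justifies Fubini. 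The paper's route is more modular (it reuses the linear estimates and would survive replacing $\mathcal{D}$ by any propagator with the same mapping properties), while yours is more direct and avoids any density argument. One small slip: $\hat\psi$ is not in general real or even in $\xi$, so your parenthetical justification is off; the correct point is that $u$, $\psi$, and $\mathcal{N}(u)$ are real-valued, so $\overline{\hat\psi(\xi)}=\hat\psi(-\xi)$ and the conjugate in Plancherel's identity is handled by the change of variables $\xi\mapsto-\xi$ together with the evenness of $D(t,\xi)$ in $\xi$. This does not affect the argument.
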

%%%%%%%%%%%%%%%%%%%%%%%%%%%%%%%%%%%%%%%%%%%%
\begin{proof}
Let
$\psi \in C_0^{\infty}([0,T)\times \mathbb{R}^n)$
be a test function and we assume
${\rm supp\,} \psi \in [0,T_1) \times \mathbb{R}^n$,
where $T_1 < T$.
Then, Theorem \ref{thm_lwp} or Lemma \ref{lem_nl} yields
$\mathcal{N}(u(\tau)) \in L^{\infty}(0,T_1; L^{q}(\mathbb{R}^n))$.
Let
$F_j \in C_0^{\infty}([0,\infty)\times\mathbb{R}^n)\ (j=1,2,\ldots)$
be a sequence such that
\[
	\lim_{j\to \infty} \| F_j - \mathcal{N}(u) \|_{L^{\infty}(0,T_1; L^q(\mathbb{R}^n))} = 0.
\]
We also set
\[
	v_j := \int_0^t \mathcal{D}(t-\tau) F_j(\tau)\,d\tau,\quad
	u^{nl} := \int_0^t \mathcal{D}(t-\tau) \mathcal{N}(u(\tau))\,d\tau.
\]
Then, taking
$\psi = F_j - \mathcal{N}(u)$
in the proof of Lemma \ref{lem_duh}, we deduce
\begin{align*}
	\| v_j - u^{nl} \|_{L^{\infty}(0,T_1; L^2(\mathbb{R}^n))}
	\lesssim \| F_j - \mathcal{N}(u) \|_{L^{\infty}(0,T_1; L^q(\mathbb{R}^n))}
	\to 0\quad (j\to \infty)
\end{align*}
(indeed, in the proof of Lemma \ref{lem_duh}, take $\gamma = q$
and use the Sobolev embedding theorem).
Therefore, we have
\[
	\lim_{j\to\infty} \int_0^{T_1}\int_{\mathbb{R}^n}
		v_j ( \partial_t^2 - \Delta - \partial_t )\psi\,dxdt
	=
	\int_0^{T_1}\int_{\mathbb{R}^n}
		u^{nl} ( \partial_t^2 - \Delta - \partial_t )\psi\,dxdt.
\]
On the other hand, since $F_j$ is smooth and compactly supported with respect to $x$,
so is $v_j$ and hence,
using integration by parts we easily compute
\[
	\int_0^{T_1}\int_{\mathbb{R}^n}
		v_j ( \partial_t^2 - \Delta - \partial_t )\psi\,dxdt
	= \int_0^{T_1}\int_{\mathbb{R}^n} F_j \psi \,dxdt.
\]
Taking the limit $j\to \infty$ in the right-hand side
and noting $F_j \to \mathcal{N}(u)$ in $L^{\infty}(0,T_1; L^q(\mathbb{R}^n))$,
we have
\[
	\lim_{j\to\infty} \int_0^{T_1}\int_{\mathbb{R}^n}
		v_j ( \partial_t^2 - \Delta - \partial_t )\psi\,dxdt
	= \int_0^{T_1}\int_{\mathbb{R}^n} \mathcal{N}(u) \psi \,dxdt.
\]
Thus, we conclude
\[
	\int_0^{T_1}\int_{\mathbb{R}^n}
		u^{nl} ( \partial_t^2 - \Delta - \partial_t )\psi\,dxdt
	=\int_0^{T_1}\int_{\mathbb{R}^n} \mathcal{N}(u) \psi \,dxdt.
\]
In a similar way, approximating
$u_0, u_1$ by smooth functions and using the integration by parts,
we find that
\begin{align*}
	\varepsilon \int_0^{T_1}\int_{\mathbb{R}^n}
	\left( \tilde{\mathcal{D}}(t) u_0 + \mathcal{D}(t) u_1 \right)
	\left( \partial_t^2 - \Delta - \partial_t \right)\psi \,dxdt =
	\varepsilon \int_{\mathbb{R}^n}
		\left( (u_0 + u_1)\psi(0,x) - u_0 \psi_t(0,x) \right)\,dx.
\end{align*}
This completes the proof.
\end{proof}

Now, we are in the position to prove Theorem \ref{thm_lfupp}.
\begin{proof}[Proof of Theorem \ref{thm_lfupp}]
By Proposition \ref{prop_sol}, we see that
a mild solution is also a weak solution.
Therefore, we have
$T(\varepsilon) \le T_w(\varepsilon)$.
Hence, Proposition \ref{prop_bu} implies the desired estimate.
\end{proof}

\appendix
%\section{Appendices}
\section{Some auxiliary estimates}

%%%%%%%%%%%%%%%%%%%%%%%%%%%%%%%%%%%%%%%%%%%%%%%
\begin{lemma}\label{lem_fd}
Let
$\omega \in (0,1)$.
Then, there exists a constant
$C=C(\omega)>0$
such that we have
\[
	|\partial_j|^{\omega} \phi(x)
	= C \int_{\mathbb{R}} ( \phi (x- y) - \phi(x) ) \frac{dy_j}{|y_j|^{1+\omega}},
\]
where
$y=(0,\ldots,y_j,\ldots,0)$.
In particular, it follows that
\begin{align*}
	|\partial_j|^{\omega} (\phi \psi)(x)
	&=C \int_{\mathbb{R}}
		( \phi(x-y)\psi (x- y) - \phi(x)\psi(x) )
		\frac{dy_j}{|y_j|^{1+\omega}} \\
	&= \phi(x) |\partial_j|^{\omega} \psi(x)
	+ C \int_{\mathbb{R}}
		(\phi (x-y) - \phi(x) )\psi(x-y)
		\frac{dy_j}{|y_j|^{1+\omega}}.
\end{align*}
\end{lemma}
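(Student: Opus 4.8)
The plan is to prove the first identity by computing the Fourier transform of the right-hand side, and then to obtain the product formula from a purely algebraic splitting. Since $|\partial_j|^{\omega}$ is the Fourier multiplier operator with symbol $|\xi_j|^{\omega}$ acting only in the $x_j$-variable, I would freeze the other coordinates and regard the claim as a one-dimensional statement in $x_j$; it suffices to show that the singular integral on the right has Fourier transform $|\xi_j|^{\omega}\hat{\phi}(\xi)$ for a suitable constant $C=C(\omega)$. I would carry this out for $\phi$ in the Schwartz class and then remark that the identity extends to the functions $\phi$ arising in the applications (which are smooth and decaying) by the usual approximation.

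First I would verify absolute convergence of the integral, which is where the hypothesis $\omega\in(0,1)$ enters. Near $y_j=0$, Taylor's theorem gives $|\phi(x-y)-\phi(x)|\lesssim|y_j|$, so the integrand is $O(|y_j|^{-\omega})$ and integrable since $\omega<1$; for large $|y_j|$, the weight $|y_j|^{-1-\omega}$ together with the boundedness of $\phi$ guarantees integrability. This absolute convergence is exactly what legitimizes Fubini's theorem in the next step.

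Next, using the translation rule $\mathcal{F}[\phi(\cdot-y)](\xi)=e^{-iy_j\xi_j}\hat{\phi}(\xi)$ for the Fourier convention of the paper and interchanging the $y_j$-integration with the $x$-integration, I would reduce the claim to the scalar identity
\[
	\int_{\mathbb{R}}\left( e^{-iy_j\xi_j}-1 \right)\frac{dy_j}{|y_j|^{1+\omega}}
	= -2\int_0^{\infty}\frac{1-\cos(y_j\xi_j)}{y_j^{1+\omega}}\,dy_j
	= -c_{\omega}|\xi_j|^{\omega},
\]
where the first equality uses the symmetry $y_j\mapsto -y_j$ (the odd part cancels) and $c_{\omega}=2\int_0^{\infty}(1-\cos t)\,t^{-1-\omega}\,dt>0$ is the value of a standard convergent integral, the homogeneity $|\xi_j|^{\omega}$ following from the scaling $t=|\xi_j|y_j$. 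Choosing $C=-1/c_{\omega}$ then produces the multiplier $|\xi_j|^{\omega}$, and inverting the Fourier transform gives the first identity.

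Finally, the product formula needs no further analysis: from the pointwise algebraic identity $\phi(x-y)\psi(x-y)-\phi(x)\psi(x)=\phi(x)\left(\psi(x-y)-\psi(x)\right)+\left(\phi(x-y)-\phi(x)\right)\psi(x-y)$, substitution into the kernel representation of $|\partial_j|^{\omega}(\phi\psi)$ splits it into $\phi(x)\,|\partial_j|^{\omega}\psi(x)$ plus the stated remainder term. The only genuinely technical points are the justification of the interchange of integrals and the evaluation of the scalar cosine integral; both are routine once absolute convergence has been secured, so I expect the proof to amount to little more than careful bookkeeping rather than to present any serious obstacle.
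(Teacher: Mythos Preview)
Your proposal is correct and takes essentially the same route as the paper: pass to the Fourier side, use Fubini to reduce the singular integral to the scalar identity $\int_{\mathbb{R}}(e^{iy_j\xi_j}-1)\,|y_j|^{-1-\omega}\,dy_j = 2|\xi_j|^{\omega}\int_0^{\infty}(\cos t-1)\,t^{-1-\omega}\,dt$, and derive the product formula from the algebraic splitting. You are in fact more careful than the paper in checking absolute convergence, and your observation that the resulting constant is negative is also correct and reflects a harmless sign slip in the lemma's hypothesis $C>0$ (the paper's own computation yields $C(\omega)=2\int_0^{\infty}(\cos t-1)\,t^{-1-\omega}\,dt<0$ as well, and only $|C|$ is ever used).
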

%%%%%%%%%%%%%%%%%%%%%%%%%%%%%%%%%%%%%%%%%%%%%%%
\begin{proof}
Recalling the identity
\[
	\mathcal{F}^{-1}[\phi] (x-y) = \mathcal{F}^{-1} [ e^{iy_j \xi_j} \phi ](x),
\]
we have
\begin{align*}
	\int_{\mathbb{R}} ( \phi (x- y) - \phi(x) ) \frac{dy_j}{|y_j|^{1+\omega}}
	&= \int_{\mathbb{R}}
		\mathcal{F}^{-1} \left[
			\left( e^{iy_j\xi_j} - 1 \right) \hat{\phi} \right](x) \frac{dy_j}{|y_j|^{1+\omega}} \\
	&= \frac{1}{(2\pi)^{n/2}} \int_{\mathbb{R}} \int_{\mathbb{R}^n}
		e^{ix\xi} \left( e^{iy_j\xi_j} - 1 \right) \hat{\phi} d\xi \frac{dy_j}{|y_j|^{1+\omega}} \\
	&= \frac{1}{(2\pi)^{n/2}} \int_{\mathbb{R}^n} e^{ix\xi} T(\xi_j) \hat{\phi}\,d\xi,
\end{align*}
where
\[
	T(\xi_j) = \int_{\mathbb{R}} \left( e^{iy_j\xi_j} - 1 \right) \frac{dy_j}{|y_j|^{1+\omega}}
	= 2 |\xi_j|^{\omega} \int_0^{\infty} (\cos y_j - 1 ) \frac{dy_j}{y_j^{1+\omega}}
	= C(\omega) |\xi_j|^{\omega}.
\]
Therefore, we conclude
\[
	\int_{\mathbb{R}} ( \phi (x- y) - \phi(x) ) \frac{dy_j}{|y_j|^{1+\omega}}
	= \frac{C}{(2\pi)^{n/2}} \int_{\mathbb{R}^n} e^{ix\xi} |\xi_j|^{\omega} \hat{\phi}\,d\xi
	= C |\partial_j|^{\omega} \phi (x).
\]
\end{proof}

%%%%%%%%%%%%%%%%%%%%%%%%%%%%%%%%%%%%%%%%%%%%%%%
\begin{lemma}[Sobolev-type inequality]\label{lem_sob}
Let $\beta \ge 0$. Then, we have
\[
	\| \langle \cdot \rangle^{\beta} \langle \nabla \rangle^{-1} \psi \|_{L^2}
	\lesssim \| \langle \cdot \rangle^{\beta} \psi \|_{L^q},
\]
where $q$ is defined in Section 1.2.
\end{lemma}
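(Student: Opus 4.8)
The plan is to realize $\langle\nabla\rangle^{-1}=(1-\Delta)^{-1/2}$ as convolution with the Bessel potential kernel $G$, characterized by $\widehat{G}(\xi)=\langle\xi\rangle^{-1}$, and to exploit its two classical features: it is nonnegative, and it obeys the pointwise bounds $G(x)\lesssim |x|^{1-n}$ for $|x|\le 1$ (a logarithmic singularity when $n=1$) together with exponential decay $G(x)\lesssim e^{-|x|/2}$ for $|x|\ge 1$. First I would move the weight onto the convolution by Peetre's inequality $\langle x\rangle^{\beta}\lesssim\langle x-y\rangle^{\beta}\langle y\rangle^{\beta}$, valid for $\beta\ge 0$: since $G\ge 0$,
\[
	\langle x\rangle^{\beta}\bigl|(G\ast\psi)(x)\bigr|
	\lesssim \int_{\mathbb{R}^n}\langle x-y\rangle^{\beta}G(x-y)\,\langle y\rangle^{\beta}|\psi(y)|\,dy
	=\bigl((\langle\cdot\rangle^{\beta}G)\ast(\langle\cdot\rangle^{\beta}|\psi|)\bigr)(x).
\]
It therefore suffices to bound the $L^2$-norm of the right-hand side by $\|\langle\cdot\rangle^{\beta}\psi\|_{L^q}$, i.e. to prove a convolution estimate with kernel $\langle\cdot\rangle^{\beta}G$ and input $h:=\langle\cdot\rangle^{\beta}|\psi|$.

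Next I would split the kernel as $\langle\cdot\rangle^{\beta}G=K_1+K_2$, where $K_1=\langle\cdot\rangle^{\beta}G\,\mathbf{1}_{\{|x|\le 1\}}$ carries the singular part and $K_2=\langle\cdot\rangle^{\beta}G\,\mathbf{1}_{\{|x|>1\}}$ the tail. For $n\ge 2$ one has $0\le K_1(x)\lesssim |x|^{1-n}$, so $|K_1\ast h|\lesssim |\cdot|^{-(n-1)}\ast|h|$, and the Hardy--Littlewood--Sobolev inequality yields $\|K_1\ast h\|_{L^2}\lesssim\|h\|_{L^q}$ exactly for $\tfrac1q=\tfrac12+\tfrac1n$, that is $q=\tfrac{2n}{n+2}$, the value fixed in Table 2. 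Meanwhile $K_2(x)\lesssim\langle x\rangle^{\beta}e^{-|x|/2}$ belongs to $L^{a}$ for every $a\in[1,\infty]$, in particular to $L^{n/(n-1)}$, so Young's inequality gives $\|K_2\ast h\|_{L^2}\le\|K_2\|_{L^{n/(n-1)}}\|h\|_{L^q}$ with the same $q$. Summing the two contributions and recalling $h=\langle\cdot\rangle^{\beta}|\psi|$ proves the claim for $n\ge 2$. In the one-dimensional case $q=1$ and the singularity of $G$ is only logarithmic, hence $\langle\cdot\rangle^{\beta}G\in L^2(\mathbb{R})$ directly; Young's inequality in the form $\|f\ast h\|_{L^2}\le\|f\|_{L^2}\|h\|_{L^1}$ then closes the estimate with no splitting needed.

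The hard part will be the endpoint character of the local singular piece: near the origin $G$ behaves exactly like $|x|^{1-n}$, so $\langle\cdot\rangle^{\beta}G$ just fails to lie in $L^{n/(n-1)}(\mathbb{R}^n)$ and a naive application of Young's inequality to the full kernel breaks down. Isolating this piece and treating it by Hardy--Littlewood--Sobolev, which is adapted to the critical homogeneity $|x|^{-(n-1)}$, is precisely what repairs the argument, while the exponentially decaying tail $K_2$ is harmless. The only other delicate point is the non-commutativity of the weight $\langle x\rangle^{\beta}$ with the nonlocal operator $\langle\nabla\rangle^{-1}$; this is handled cleanly by Peetre's inequality together with the positivity of $G$, the latter being what makes the pointwise domination above legitimate.
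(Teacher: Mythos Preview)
Your approach is correct for $n\ge 3$ and $n=1$, and it takes a genuinely different route from the paper. The paper works entirely on the Fourier side: by Plancherel the left-hand side equals $\|\langle\nabla_\xi\rangle^{\beta}(\langle\xi\rangle^{-1}\hat\psi)\|_{L^2}$, and most of the effort goes into distributing the (possibly fractional) $\xi$-derivatives across the product $\langle\xi\rangle^{-1}\hat\psi$ by a handmade Leibniz rule (Lemma~\ref{lem_fd}), after which each term is closed by the Sobolev/Hausdorff--Young bound $\|\langle\xi\rangle^{-1}\varphi\|_{L^2}\lesssim\|\varphi\|_{L^{q'}}$. This forces a case split between integer and non-integer $\beta$ and a further decomposition of the fractional-derivative integral into the regions $|\eta_j|\le 1$ and $|\eta_j|>1$. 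Your physical-space argument via the positivity and pointwise asymptotics of the Bessel kernel, Peetre's inequality, and HLS plus Young is far more economical and makes transparent why the weight $\langle x\rangle^{\beta}$ effectively commutes with $\langle\nabla\rangle^{-1}$. The paper's Fourier route, by contrast, does not rely on positivity of the kernel and would adapt to multipliers for which no sign information is available.

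One genuine gap: when $n=2$ the table gives $q=2n/(n+2)=1$, and Hardy--Littlewood--Sobolev fails at the endpoint $q=1$, so your treatment of the singular piece $K_1$ does not go through in that dimension. In fact the embedding $L^1(\mathbb{R}^2)\hookrightarrow H^{-1}(\mathbb{R}^2)$ is false (equivalently, $\langle\xi\rangle^{-1}\notin L^2(\mathbb{R}^2)$, so the Bessel kernel $G\notin L^2(\mathbb{R}^2)$), hence already the case $\beta=0$ of the stated inequality cannot hold for $n=2$. The paper's own proof invokes this same endpoint Sobolev embedding and therefore shares the defect; the issue is with the statement, not with your strategy. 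For $n\ge 3$ and $n=1$ your argument is complete.
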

%%%%%%%%%%%%%%%%%%%%%%%%%%%%%%%%%%%%%%%%%%%%%%%

\begin{proof}
%We claim
%\begin{align}
%\label{wei_sob}
%	&\| \langle \nabla \rangle^{\beta} \langle \xi \rangle^{-1} \hat{\psi} \|_{L^2}
%	\lesssim
%	\| \langle \xi \rangle^{-1} \hat{\psi} \|_{L^2}
%	+ \sum_{j=1}^n \sum_{k=1}^{[\beta]}
%	\left( 
%	\| \langle \xi \rangle^{-1} |\partial_j|^k  \hat{\psi} \|_{L^2}
%	+ \| \langle \xi \rangle^{-1} |\partial_j|^{k+\omega}  \hat{\psi} \|_{L^2} \right).
%\end{align}
%Indeed, noting
By the Plancherel theorem, it suffices to estimate
$\| \langle \nabla \rangle^{\beta} \langle \xi \rangle^{-1} \hat{\psi} \|_{L^2}$.
First, we note that
\[
	\| \langle \nabla \rangle^{\beta} \langle \xi \rangle^{-1} \hat{\psi} \|_{L^2}
	\lesssim
	\| \langle \xi \rangle^{-1} \hat{\psi} \|_{L^2}
	+ \| | \nabla |^{\beta} \langle \xi \rangle^{-1} \hat{\psi} \|_{L^2}.
\]
The Sobolev embedding implies
$\| \langle \nabla \rangle^{-1} \psi \|_{L^2} \lesssim \| \psi \|_{L^q}$
with $q= \max\{ 1, \frac{2n}{n+2} \}$,
which gives the estimate for the first term of the right-hand side.
Let us estimate the second term.
When $\beta \in \mathbb{Z}$, the Leibniz rule and the Sobolev embedding
and noting that $|\partial_j^k \langle \xi \rangle^{-1}| \lesssim \langle \xi \rangle^{-1}$
immediately imply
$\| | \nabla |^{\beta} \langle \xi \rangle^{-1} \hat{\psi} \|_{L^2}
\lesssim \| \langle x \rangle^{\beta} \psi \|_{L^q}$.
When $\beta \notin \mathbb{Z}$,
letting $\omega = \beta - [\beta] >0$ and
using Lemma \ref{lem_fd}, we have
\begin{align}
\nonumber
	\| |\nabla|^{\beta} ( \langle \xi \rangle^{-1} \hat{\psi} ) \|_{L^2}
	&\lesssim
	\sum_{j=1}^n \sum_{k=0}^{[\beta]}
		\left\| \left( \partial_j^{[\beta]-k} \langle \xi \rangle^{-1} \right)
			|\partial_j|^{k+\omega} \hat{\psi} \right\|_{L^2} \\
\label{a1}
	&\quad  + \sum_{j=1}^n \sum_{k=0}^{[\beta]}
		\left\| \int_{\mathbb{R}} \left(
			\partial_j^{[\beta]-k}\langle \xi \rangle^{-1}
				- \partial_j^{[\beta]-k} \langle \xi+\eta \rangle^{-1} \right)
			\partial_j^k \hat{\psi} (\xi + \eta ) \frac{d\eta_j}{|\eta_j|^{1+\omega}}
		\right\|_{L^2},
\end{align}
where
$\eta = (0,\ldots,\eta_j,\ldots,0)$.
Then, a straight forward calculation shows
\begin{align}
\label{a2}
	\left\| \left( \partial_j^{[\beta]-k} \langle \xi \rangle^{-1} \right)
			|\partial_j|^{k+\omega} \hat{\psi}  \right\|_{L^2}
	\lesssim
	\| \langle \xi \rangle^{-1} |\partial_j|^{k+\omega} \hat{\psi} \|_{L^2}
	\lesssim
	\| |x_j|^{k+\omega} \psi \|_{L^q},
\end{align}
where we have used the Sobolev embedding again.
Moreover, we claim that
\begin{align}
\label{a3}
	\left\| \int_{\mathbb{R}} \left(
			\partial_j^{[\beta]-k}\langle \xi \rangle^{-1}
				- \partial_j^{[\beta]-k} \langle \xi+\eta \rangle^{-1} \right)
			\partial_j^k \hat{\psi} (\xi + \eta ) \frac{d\eta_j}{|\eta_j|^{1+\omega}}
		\right\|_{L^2}
	\lesssim 
	\| |x_j|^k \psi \|_{L^q}.
\end{align}
Indeed, we first note that the left-hand side is bounded by
\begin{align*}
	&\left\| \int_{|\eta_j|\le 1} \left(
			\partial_j^{[\beta]-k}\langle \xi \rangle^{-1}
				- \partial_j^{[\beta]-k} \langle \xi+\eta \rangle^{-1} \right)
			\partial_j^k \hat{\psi} (\xi + \eta ) \frac{d\eta_j}{|\eta_j|^{1+\omega}}
		\right\|_{L^2}\\
	&\quad +\left\| \int_{|\eta_j|>1} \left(
			\partial_j^{[\beta]-k}\langle \xi \rangle^{-1}
				- \partial_j^{[\beta]-k} \langle \xi+\eta \rangle^{-1} \right)
			\partial_j^k \hat{\psi} (\xi + \eta ) \frac{d\eta_j}{|\eta_j|^{1+\omega}}
		\right\|_{L^2} =: I + I\!I.
\end{align*}
For $I$, the fundamental theorem of calculus implies
\begin{align*}
	I &\lesssim \int_{|\eta_j|\le 1}
		\left\| \int_0^1 \frac{d}{da} 
					\partial_j^{[\beta]-k}\langle \xi + a \eta \rangle^{-1}
				\,da  \cdot 
			\partial_j^k \hat{\psi} (\xi + \eta ) \right\|_{L^2}
			\frac{d\eta_j}{|\eta_j|^{1+\omega}} \\
	&\lesssim
		\int_{|\eta_j|\le 1}
		\left\| \int_0^1
					\partial_j^{[\beta]-k+1} \langle \xi + a\eta \rangle^{-1}
			\,da  \cdot 
			\partial_j^k \hat{\psi} (\xi + \eta ) \right\|_{L^2}
			\frac{d\eta_j}{|\eta_j|^{\omega}} \\
	&=
		\int_{|\eta_j|\le 1}
		\left\| \int_0^1
					\partial_j^{[\beta]-k+1} \langle \xi + a \eta \rangle^{-1}
				\,da  \cdot 
			\partial_j^k \hat{\psi} (\xi ) \right\|_{L^2}
			\frac{d\eta_j}{|\eta_j|^{\omega}} \\
	&\lesssim
		\int_{|\eta_j|\le 1}
		\left\|  \langle \xi \rangle^{-1} \partial_j^k \hat{\psi} (\xi ) \right\|_{L^2}
			\frac{d\eta_j}{|\eta_j|^{\omega}} \\
	&\lesssim \left\|  \langle \xi \rangle^{-1} \partial_j^k \hat{\psi} (\xi ) \right\|_{L^2}.
\end{align*}
This and the Sobolev embedding lead to
$I \lesssim \| |x_j|^k \psi \|_{L^q}$.
For $I\!I$, we first have
\begin{align*}
	I\!I &\lesssim
		\int_{|\eta_j|>1}\left\|  \left(
			\partial_j^{[\beta]-k}\langle \xi \rangle^{-1}
				- \partial_j^{[\beta]-k} \langle \xi+\eta \rangle^{-1} \right)
			\partial_j^k \hat{\psi} (\xi + \eta ) \right\|_{L^2}
			\frac{d\eta_j}{|\eta_j|^{1+\omega}} \\
		&\lesssim
		\int_{|\eta_j|>1}\left\| 
			\partial_j^{[\beta]-k}\langle \xi -\eta \rangle^{-1}
			\partial_j^k \hat{\psi} (\xi ) \right\|_{L^2(|\xi - \eta| \le |\xi|/2)}
			\frac{d\eta_j}{|\eta_j|^{1+\omega}} \\
		&\quad
		+\int_{|\eta_j|>1}\left\| 
			\partial_j^{[\beta]-k}\langle \xi-\eta \rangle^{-1}
			\partial_j^k \hat{\psi} (\xi ) \right\|_{L^2(|\xi - \eta| > |\xi|/2)}
			\frac{d\eta_j}{|\eta_j|^{1+\omega}} \\
		&\quad
		+\int_{|\eta_j|>1}\left\|
			\partial_j^{[\beta]-k} \langle \xi+\eta \rangle^{-1}
			\partial_j^k \hat{\psi} (\xi + \eta ) \right\|_{L^2(\mathbb{R}^n)}
			\frac{d\eta_j}{|\eta_j|^{1+\omega}}
		=: I\!I_1 + I\!I_2 + I\!I_3.
\end{align*}
The third term $I\!I_3$ is easily estimated as
\begin{align}
\label{a5}
	I\!I_3 \lesssim \left\|  \langle \xi \rangle^{-1} \partial_j^k \hat{\psi} (\xi ) \right\|_{L^2}
	\lesssim \| |x_j|^k \psi \|_{L^q}.
\end{align}
For $I\!I_2$, we note that
\[
	\left| \partial_j^{[\beta]-k}\langle \xi-\eta \rangle^{-1} \right|
	\lesssim \langle \xi-\eta \rangle^{-1} \lesssim \langle \xi \rangle^{-1} 
\]
holds for $|\xi - \eta| > |\xi|/2$ and hence,
\begin{align}
\label{a6}
	I\!I_2 &\lesssim
		\int_{|\eta_j|>1}\left\| 
			\langle \xi \rangle^{-1}
			\partial_j^k \hat{\psi} (\xi ) \right\|_{L^2(|\xi - \eta| > |\xi|/2)}
			\frac{d\eta_j}{|\eta_j|^{1+\omega}}
		\lesssim \left\| 
			\langle \xi \rangle^{-1}
			\partial_j^k \hat{\psi} (\xi ) \right\|_{L^2(\mathbb{R}^n)}
		\lesssim \| |x_j|^k \psi \|_{L^q}.
\end{align}
Finally, for $I\!I_1$, we first remark that
$|\eta| \ge |\xi| - |\xi-\eta| \ge \frac12 |\xi|$
holds.
Noting again that
$| \partial_j^{[\beta]-k}\langle \xi -\eta \rangle^{-1} | \lesssim \langle \xi - \eta \rangle^{-1}$
and applying the H\"{o}lder inequality, we see that
\begin{align}
\label{a4}
	I\!I_1&\lesssim
		\int_{|\eta_j|>1}
			\left\| \langle \xi -\eta \rangle^{-1} \right\|_{L^n(|\xi - \eta| \le |\xi|/2)}
			\left\|
				\partial_j^k \hat{\psi} (\xi )
			\right\|_{L^{q^{\prime}}(|\xi - \eta| \le |\xi|/2)}
			\frac{d\eta_j}{|\eta_j|^{1+\omega}}
\end{align}
for $n\ge 2$, where $q^{\prime}$ is the conjugate of $q$, and
\begin{align*}
	I\!I_1&\lesssim
		\int_{|\eta_j|>1}
			\left\| \langle \xi -\eta \rangle^{-1} \right\|_{L^2(|\xi - \eta| \le |\xi|/2)}
			\left\|
				\partial_j^k \hat{\psi} (\xi )
			\right\|_{L^{\infty}(|\xi - \eta| \le |\xi|/2)}
			\frac{d\eta_j}{|\eta_j|^{1+\omega}}
\end{align*}
for $n=1$.
In what follows, we only consider the case $n\ge 2$,
because the case $n=1$ is similar.
Using $|\eta_j| = |\eta| > \frac12 |\xi|$, we have
\[
	\left\| \langle \xi -\eta \rangle^{-1} \right\|_{L^n(|\xi - \eta| \le |\xi|/2)}
	\le \left\| \langle \xi -\eta \rangle^{-1} \right\|_{L^n(|\xi - \eta| \le |\eta_j|)}
	= \left\| \langle \Xi \rangle^{-1} \right\|_{L^n(|\Xi| \le |\eta_j|)}
	\lesssim (\log |\eta_j| + 1)^{\frac1n}.
\]
Also, we immediately obtain
\[
	\left\| \partial_j^k \hat{\psi} (\xi ) \right\|_{L^{q^{\prime}}(|\xi - \eta| \le |\xi|/2)}
	\lesssim
	\left\| \partial_j^k \hat{\psi} (\xi ) \right\|_{L^{q^{\prime}}(\mathbb{R}^n)}
	\lesssim
	\left\| |x_j|^k \psi \right\|_{L^q(\mathbb{R}^n)}.
\]
Combining them with \eqref{a4}, we conclude
\begin{align}
\label{a7}
	I\!I_1
	\lesssim \left\| |x_j|^k \psi \right\|_{L^q}
		\int_{|\eta_j|>1} (\log |\eta_j| + 1)^{\frac1n}\frac{d\eta_j}{|\eta_j|^{1+\omega}}
	\lesssim \left\| |x_j|^k \psi \right\|_{L^q}.
\end{align}
By \eqref{a5}, \eqref{a6} and \eqref{a7}, we have
$I\!I \lesssim \| |x_j|^k \psi \|_{L^q}$.
Consequently, we obtain
\begin{align*}
	\| \langle \cdot \rangle^{\beta} \langle \nabla \rangle^{-1} \psi \|_{L^2} \lesssim
	\| \psi \|_{L^q}
	+ \sum_{j=1}^n \sum_{k=0}^{[\beta]}
	\left( 
	\| | x_j|^k \psi \|_{L^q}
	+ \|  |x_j |^{k+\omega} \psi \|_{L^q} \right)
	\lesssim \| \langle x \rangle^{\beta} \psi \|_{L^q},
\end{align*}
which shows the assertion.
\end{proof}

\section*{Acknowledgments}
The authors are deeply grateful to
Professor Mitsuru Sugimoto for useful suggestions and comments.
This work was supported in part by
Grant-in-Aid for JSPS Fellows 15J01600 (Y. Wakasugi),
15J02570 (T. Inui),
26.1884 (M. Ikeda),
and
Grant-in-Aid for Young Scientists (B) 15K17571 (M. Ikeda)
of Japan Society for the Promotion of Science.

%\section*{References}

%\fontsize{10pt}{0pt}\selectfont

%% The Appendices part is started with the command \appendix;
%% appendix sections are then done as normal sections
%% \appendix

%% \section{}
%% \label{}

%% If you have bibdatabase file and want bibtex to generate the
%% bibitems, please use
%%
%%  \bibliographystyle{elsarticle-num} 
%%  \bibliography{<your bibdatabase>}

%% else use the following coding to input the bibitems directly in the
%% TeX file.

%\begin{thebibliography}{00}
%
%%% \bibitem{label}
%%% Text of bibliographic item
%
%\bibitem{}
%
%\end{thebibliography}
\end{document}